\documentclass[reqno,11pt]{amsart}
\usepackage{amsfonts}
\usepackage{a4wide}
\usepackage{color}
\usepackage{mathrsfs}
\usepackage{mathtools}
\usepackage{amsmath}
\usepackage{amssymb}
\usepackage{bbm}
\usepackage{esint}
\usepackage{nicefrac}
\usepackage{comment}
\numberwithin{equation}{section}
\usepackage[colorlinks,citecolor=green,linkcolor=red]{hyperref}

\usepackage[latin1]{inputenc}
\input xy
\xyoption{all}

\newtheorem{theorem}{Theorem}[section]
\newtheorem{Ca}[theorem]{Corollary}
\newtheorem{Th}[theorem]{Theorem}
\newtheorem{Lm}[theorem]{Lemma}
\newtheorem{Prop}[theorem]{Proposition}
\newtheorem{Def}[theorem]{Definition}
\newtheorem{Example}[theorem]{Example}
\newtheorem{Remark}[theorem]{Remark}
\newtheorem{Problem}[theorem]{Problem}

\title[Traces of Besov spaces: the limiting case]
{Traces of Besov spaces to regular subsets of metric measure spaces: the limiting case}
\author{Aleksei Y. Chikalov}
\address{Steklov Mathematical Institute of Russian Academy of Sciences}
\email{chikalov.a@phystech.edu}

\begin{document}
\allowdisplaybreaks

\subjclass[2010]{46E35, 42B35}
\keywords{Besov spaces, traces, metric measure spaces, $K$-regular trees.}

\begin{abstract}
Let $(X,d,\mu)$ be a metric measure space whose measure $\mu$ is uniformly locally doubling and which supports a local weak $(1,p)$-Poincar\'e inequality for some $p\in[1,\infty)$. Given $\theta\in(0,p)$ and an Ahlfors--David codimension-$\theta$ regular subset $E\subset X$, we provide a complete intrinsic description of the trace-space of the Besov space $B^{\theta/p}_{p,1}(X)$ to $E$. More precisely, we show that the trace operator is well defined and bounded from $B^{\theta/p}_{p,1}(X)$ to $L_p(E,\mathcal H_\theta\lfloor_E)$. We also show that the upper estimate in the Ahlfors--David codimension-$\theta$ regularity condition is necessary for such boundedness under the local weak Poincar\'e inequality. Conversely, assuming that $E$ is Ahlfors--David codimension-$\theta$ regular, we construct a bounded nonlinear extension operator from $L_p(E,\mathcal H_\theta\lfloor_E)$ to $B^{\theta/p}_{p,1}(X)$. Thus the trace-space is identified intrinsically with $L_p(E,\mathcal H_\theta\lfloor_E)$. This extends the classical limiting case of the trace theorem obtained by Burenkov and Gol'dman in \cite{bur_gold, Gol}. Finally, we apply the general theory to $K$-regular trees, $K\ge 1$, for which we additionally derive a necessary and sufficient criterion for the existence of traces.

\end{abstract}
\maketitle
\tableofcontents

\section{Introduction}

\noindent 

The theory of function spaces on metric measure spaces is an important and rapidly growing area of modern analysis.
It provides a flexible framework in which Sobolev, Newtonian, Besov and related spaces can be studied beyond the Euclidean setting.
We refer the interested reader to \cite{ AKZ, sob_mms, Jon} and references therein for general background and further results.
One of the central problems in this area is the trace problem, that is, the problem of finding sharp intrinsic descriptions of trace-spaces of function spaces to closed subsets of the ambient space.
In the present paper we study this problem for Besov spaces on metric measure spaces in the limiting case.
\par
In order to formulate the problem, we shall recall a few concepts. First of all, by a \emph{metric measure
space}, we mean a triple $(X, d, \mu)$, where $(X, d)$ is a \emph{complete separable metric space}, and $\mu$ is a \emph{locally finite Borel regular measure} with support $\operatorname{supp}\mu = X$. In order to develop a fruitful theory of function spaces on such a metric measure space, one needs to impose a compatibility condition between the distance and the measure. We assume that the metric measure space $X=(X, d, \mu)$ is \emph{$q$-admissible}, for some $q\in [1, \infty)$ (see Section~2 for the details), i.e.,
\begin{enumerate}
    \item the measure $\mu$ is uniformly locally doubling;
    \item the space $X$ supports a local weak $(1, q)$-Poincar\'e inequality.
\end{enumerate}
Given a closed set \(E\subset X\) and a locally finite Borel regular measure \(\nu\) on \(X\) with \(\operatorname{supp}\nu\subset E\), and given a locally integrable $f\in L_1^{\operatorname{loc}}(X)$, a \(\nu\)-measurable function \(\phi:E\to\mathbb R\) is called a trace of \(f\) to \(E\) if
\begin{equation}
\label{eq.intro_trace_def}
    \lim_{r\to0}
    \fint\limits_{B_r(x)}
    |f(y)-\phi(x)|\,d\mu(y)
    =
    0, \qquad \text{for \(\nu\)-a.e. \(x\in E\).}
\end{equation}
In this case, the corresponding equivalence class modulo \(\nu\)-negligible sets is denoted by \(\operatorname{Tr}f\).
\par
If $F(X)\subset L_1^{\operatorname{loc}}(X)$ is a normed linear space of functions, we say that the trace operator is well defined on $F(X)$ if, for each $f\in F(X)$,
there exists a trace of $f$ to $E$ in the sense of \eqref{eq.intro_trace_def}. In this case, the trace operator is defined as the mapping
\begin{equation}
    \operatorname{Tr}: F(X) \rightarrow L_0(E, \nu),
\end{equation}
where $L_0(E, \nu)$ denotes the space of all equivalence classes of $\nu$-measurable functions on $E$ modulo $\nu$-negligible sets. If the trace operator is well defined on $F(X)$, we can consider the trace-space
\begin{equation}
    F(X)\big|_E:=\left\{\operatorname{Tr}f: f\in F(X)\right\}.
\end{equation}
This space is equipped with the quotient-space norm, i.e.,
\begin{equation}
    \|\phi\|_{F(X)\big|_E}:= \inf\left\{ \|f\|_{F(X)}: f\in F(X), \operatorname{Tr}f = \phi\right\}.
\end{equation}
\par
The trace problem considered in this paper can be formulated as follows.
\begin{Problem}
\label{Pr.intro_trace_problem}
Let $p\in [1, \infty)$ and let $X = (X, d, \mu)$ be a $p$-admissible metric measure space. Let $E\subset X$ be a closed set, let \(\nu\) be a locally finite Borel regular measure on \(X\) with \(\operatorname{supp}\nu\subset E\), and let $F(X)\subset L_1^{\operatorname{loc}}(X)$ be a normed linear space such that the trace operator is well defined on $F(X)$.
\begin{enumerate}
    \item Given a function $\phi: E\to\mathbb{R}$, find a necessary and sufficient condition for the inclusion $\phi \in F(X)\big|_E$.
    \item Find an intrinsic norm on the trace space $F(X)\big|_E$ which is equivalent to the quotient-space norm.
    \item Decide whether every function in the trace-space $F(X)\big|_E$ admits a bounded extension to \(F(X)\).
\end{enumerate}
\end{Problem}
Without any regularity assumption on $(E, \nu)$ the trace problem is very challenging. In the present paper, we assume that $(E, \nu)$ is \emph{Ahlfors--David codimension-$\theta$ regular}, $\theta\in (0, \infty)$, i.e., there are constants $C_1, C_2>0$ such that
\begin{equation}
    C_1\frac{\mu(B_r(x))}{r^{\theta}} \le \nu (B_r(x))\le C_2\frac{\mu(B_r(x))}{r^{\theta}}, \qquad \text{for all } (x, r) \in E\times(0, 1],
\end{equation}
where $B_r(x)$ stands for the open ball centered at $x$ of radius $r>0$. In this case, $\nu$ is comparable to the codimension-$\theta$ Hausdorff measure
$\mathcal{H}_\theta\lfloor_E$ (see Section 2 for details).
We solve Problem~\ref{Pr.intro_trace_problem} for the Besov spaces $B^{{\theta}/{p}}_{p, 1}(X)$, $p\in [1, \infty)$, $\theta\in (0, p)$, assuming that $E$ is Ahlfors--David codimension-$\theta$ regular. In particular, we construct a \emph{bounded} nonlinear operator, called an \emph{extension} operator
\begin{equation}
    \operatorname{Ext}: B^{{\theta}/{p}}_{p, 1}(X)\big|_E \rightarrow B^{{\theta}/{p}}_{p, 1}(X),
\end{equation}
which is a right inverse to the trace operator.
\par
\noindent\textbf{Previously known results.}
Let us briefly recall the most relevant results concerning traces of Besov spaces.

\begin{enumerate}
    \item
    In the Euclidean setting, the classical result of Besov \cite{Bes_or} states that, given\\ \(m\in (0, n)\), \(p,q\in[1,\infty]\), and $s>\frac{n-m}{p}$, the trace-space of $B^s_{p,q}(\mathbb R^n)$ to $\mathbb{R}^m$ is linearly and continuously isomorphic to $ B^{s-\frac{n-m}p}_{p,q}(\mathbb R^m)$.
    \item
    The limiting case \(s=\frac{n-m}{p}\) is more delicate. In this case, for \(q>1\) the trace operator is not well defined on the corresponding Besov space.  For \(q=1\), Burenkov and Gol'dman (see \cite{bur_gold, Gol}) proved that, given \(0<m<n\), \(p\in[1,\infty)\) the trace-space $B^{\frac{n-m}{p}}_{p,1}(\mathbb R^n)\big|_{\mathbb R^m}$ coincides with $L_p(\mathbb R^m)$ as a linear space, and the corresponding norms are equivalent. One of the most remarkable features of this limiting case is that the extension operator has to be nonlinear (see \cite{bur_gold}).
    It should be mentioned that the limiting phenomenon was first discovered by Gagliardo in \cite{gagl}. Moreover, the same need for a nonlinear extension operator appears in that context (see \cite{koch_sn, Peetre}).
    \item
    Traces of classical Besov spaces to Ahlfors--David regular subsets of \(\mathbb R^n\) in the nonlimiting range were studied in several works (see, for example, \cite{Ihnat, Jon}).
    In this case, if \(E\subset\mathbb R^n\) is Ahlfors--David \(d\)-regular, $d\in (0, n)$, and \(s>\frac{n-d}{p}\), then the trace-space of $B^{s}_{p, q}(\mathbb{R}^n)$, $p, q\in [1, \infty)$, to $E$ is linearly and continuously isomorphic to the Besov space \(B^{s-\frac{n-d}{p}}_{p, q}(E)\).
    \item
    By contrast, the endpoint case \(s=\frac{n-d}{p}\), \(q=1\), is substantially more delicate. This case remains unresolved in general, even in the Euclidean case, although partial results are given in \cite{Gul}. In particular, given $p\in [1, \infty)$, $\theta \in (0, \infty)$, and an Ahlfors--David codimension-$\theta$ regular set $E \subset\mathbb{R}^n$, for each $f\in B^{\theta/p}_{p, 1}(\mathbb{R}^n)$, $\operatorname{Tr}f \in L_p(E, \mathcal{H}_{\theta}\lfloor_{E})$ and the trace operator
    \begin{equation}
        \operatorname{Tr}: B^{\theta/p}_{p, 1}(\mathbb{R}^n)\to L_p(E, \mathcal{H}_{\theta}\lfloor_{E}) 
    \end{equation}
    is bounded. Moreover, if $m\in \mathbb{N}$, $m \in (0, n)$, and $E$ is $m$-rectifiable, then there exists a bounded nonlinear extension operator
    \begin{equation}
        \operatorname{Ext}: L_p(E, \mathcal{H}_{n-m}\lfloor_E) \rightarrow B^{\frac{n-m}{p}}_{p, 1}(\mathbb{R}^n).
    \end{equation}
    Thus, $B^{\frac{n-m}{p}}_{p, 1}(\mathbb{R}^n)\big|_E = L_p(E, \mathcal{H}_{n-m}\lfloor_E)$ as linear spaces, corresponding norms being equivalent.
    \item
    Weighted versions of trace theorems for Besov spaces have also been studied. In particular,
traces of weighted Besov spaces with power weights and distance-type weights were considered
in \cite{Har_Sch, Iwon} in both limiting and nonlimiting cases. These results are of a
different nature from the present paper: they concern weighted Euclidean spaces, whereas our
main results are formulated for admissible metric measure spaces and codimension-$\theta$
regular subsets.
    \item
    In the setting of metric measure spaces, trace theorems for Besov spaces were obtained under
various regularity assumptions on the ambient space and on the set $E$. In the special case
when $(X,d,\mu)$ is an Ahlfors $Q$-regular space and $E\subset X$ is Ahlfors $d$-regular,
$d\in(0,Q)$, the trace problem for Besov spaces $B^s_{p,q}(X)$ with
$s>\frac{Q-d}{p}$, $p, q\in [1, \infty)$ was solved in \cite{mig,saks}. The paper \cite{saks} further relaxes these
assumptions by allowing admissible metric measure spaces and Ahlfors--David
codimension-$(Q-d)$ regular sets. However, these results concern the nonlimiting range,
whereas the endpoint case $s=\frac{\theta}{p}$, $q=1$, requires a different argument.
\end{enumerate}
    A few papers (\cite{Iwon, saks}) considered a wider range of parameters $p, q \in (0, \infty)$. However, in the present paper we restrict our attention to the case $p \in [1, \infty)$, $q = 1$. 
    \par
    The above discussion shows a clear analogy between the limiting trace phenomena for Sobolev spaces and for Besov spaces: in both cases the corresponding trace-spaces lose their smoothness and become $L_p$-spaces, and bounded extension operators are necessarily
nonlinear. At the same time, while trace theorems for Sobolev
spaces on metric measure spaces have been extensively developed (see, for example, \cite{mal_e, tyul2, tyul}), the corresponding endpoint problem for Besov spaces, especially beyond the
Euclidean setting, is much less understood. The purpose of the present paper is to provide a complete intrinsic description of the trace-space of $B^{\theta/p}_{p,1}(X)$ to $E\subset X$
for $p\in[1,\infty)$ and $\theta\in(0,p)$, assuming that $E$ is Ahlfors--David
codimension-$\theta$ regular.

\subsection{Main results}
Let $(X, d, \mu)$ be a metric measure space with a uniformly locally doubling measure $\mu$.
Until the end of the paper, $\overline{\Delta}_tf$, $t>0$ stands for the mean oscillation of a locally integrable function $f$, i.e.,
\begin{equation}
    \overline{\Delta}_tf(x) = \frac{1}{\mu(B_{t}(x))}\int\limits_{B_{t}(x)} \left|f(y) - \frac{1}{\mu(B_{t}(x))}\int\limits_{B_{t}(x)}f(z)d\mu(z)\right|d\mu(y).
\end{equation}
\par
Given $p, q \in [1, \infty]$ and $s \in (0, 1)$, $B^s_{p, q}(X)$ denotes the collection of all (equivalence classes of) functions $f \in L_p(X)$ such that
\begin{equation}
    \|f\|_{b^s_{p, q}(X)} := \left(\int\limits_{0}^{1} \left(t^{-s}\|\overline{\Delta}_tf\|_{L_p(X)}\right)^{q} \frac{dt}{t} \right)^{1/q} < \infty
\end{equation}
with the usual modification when $q = \infty$. We equip $B^s_{p, q}(X)$ with the norm
\begin{equation}
    \|f\|_{B^{s}_{p, q}(X)} := \|f\|_{L_p(X)} + \|f\|_{b^{s}_{p, q}(X)}.
\end{equation}
\par
\par
We now formulate the main results of the paper.
\begin{Th}[Direct trace theorem]
\label{Th.main_direct}
Let \((X,d,\mu)\) be a metric measure space with a uniformly locally doubling measure.
Let \(E\subset X\) be closed, and let \(\nu\) be a locally finite Borel regular measure on \(X\) with \(\operatorname{supp}\nu\subset E\).
Given $p\in [1, \infty)$ and $\theta\in (0, p)$, assume that \(\nu\) is upper codimension-\(\theta\) regular, that is, there exists a constant \(C>0\) such that
\begin{equation}
\label{eq.main_upper_regular}
    \nu(B_r(x)\cap E)
    \le
    C\frac{\mu(B_r(x))}{r^\theta},
    \qquad \text{for all } (x, r)\in E\times(0,1].
\end{equation}
Then every $f\in B^{\theta/p}_{p,1}(X)$ has a trace to $E$ in the sense of
\eqref{eq.intro_trace_def}, $\operatorname{Tr}f\in L_p(E,\nu)$, and the trace operator
\begin{equation}
\label{eq.main_direct_trace}
    \operatorname{Tr}:
    B^{\theta/p}_{p,1}(X)
    \rightarrow
    L_p(E,\nu)
\end{equation}
is bounded.
Moreover, for each \(f\in B^{\theta/p}_{p,1}(X)\),
\begin{equation}
\label{eq.main_direct_trace_limit}
    \lim_{r\to0}
    \fint\limits_{B_r(x)}
    |f(y)-\operatorname{Tr}f(x)|^p\,d\mu(y)
    =
    0
\end{equation}
for \(\nu\)-almost every \(x\in E\).
\end{Th}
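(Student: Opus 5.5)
Our plan is to work with dyadic scales $r_k=2^{-k}$ and the ball averages $f_{k}(x):=\fint_{B_{r_k}(x)}f\,d\mu$, $x\in E$. We show that $f_k(x)$ is Cauchy for $\nu$-a.e. $x$, with the bound $\|\sup_k|f_k|\|_{L_p(E,\nu)}\lesssim\|f\|_{B^{\theta/p}_{p,1}(X)}$. Then $\operatorname{Tr}f:=\lim_k f_k$ is our candidate trace.

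\textbf{Step 1 (pointwise telescoping).} By the uniform local doubling property, for $x\in E$ and $y\in B_{r_{k+1}}(x)$,
\[
|f_{k+1}(x)-f_k(x)|\le \fint_{B_{r_{k+1}}(x)}|f-f_{B_{r_k}(x)}|\,d\mu\lesssim \overline{\Delta}_{r_k}f(x).
\]
We cannot evaluate $\overline\Delta_{r_k}f$ on $E$ directly, since $\mu(E)$ may vanish. Instead we average over $z\in B_{r_k}(x)$. Since $B_{r_k}(x)\subset B_{2r_k}(z)\subset B_{4r_k}(x)$ and the measures of these balls are comparable,
\[
|f_{k+1}(x)-f_k(x)|\lesssim \fint_{B_{r_k}(x)}\overline{\Delta}_{2r_k}f(z)\,d\mu(z)\le\Big(\fint_{B_{r_k}(x)}\big(\overline{\Delta}_{2r_k}f\big)^p\,d\mu\Big)^{1/p}.
\]

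\textbf{Step 2 (integration over $E$; the key step).} We raise the bound to the power $p$, integrate in $d\nu(x)$, and apply Tonelli. Using $\mu(B_{r_k}(x))\simeq\mu(B_{r_k}(z))$ for $d(x,z)<r_k$, together with \eqref{eq.main_upper_regular} applied to $\nu(B_{r_k}(z))\le\nu(B_{2r_k}(x'))$ for some $x'\in E\cap B_{r_k}(z)$, we get
\[
\int_E\fint_{B_{r_k}(x)}g\,d\mu\,d\nu(x)\lesssim\int_X g(z)\frac{\nu(B_{r_k}(z))}{\mu(B_{r_k}(z))}\,d\mu(z)\lesssim r_k^{-\theta}\int_X g\,d\mu .
\]
It follows that
\[
\|f_{k+1}-f_k\|_{L_p(E,\nu)}\lesssim r_k^{-\theta/p}\|\overline{\Delta}_{2r_k}f\|_{L_p(X)} .
\]
We sum over $k\ge k_0$, where $k_0$ is chosen so that $2r_{k_0}\le 1$. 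Comparing $\overline\Delta_{2r_k}$ with $\overline\Delta_t$ for $t\in[2r_k,4r_k]$ via doubling (oscillation at comparable radii), the sum $\sum_k r_k^{-\theta/p}\|\overline\Delta_{2r_k}f\|_p$ is dominated by $\|f\|_{b^{\theta/p}_{p,1}}$; this is exactly where $q=1$ enters. For the base term we use the same Tonelli bound with $g=|f|^p$: $\|f_{k_0}\|_{L_p(E,\nu)}\lesssim\|f\|_{L_p(X)}$. Minkowski's inequality then gives
\[
\Big\|\,|f_{k_0}|+\sum_{k\ge k_0}|f_{k+1}-f_k|\,\Big\|_{L_p(E,\nu)}\lesssim\|f\|_{B^{\theta/p}_{p,1}(X)} .
\]
So the series converges $\nu$-a.e., the limit $\phi$ exists, and $\|\phi\|_{L_p(E,\nu)}\lesssim\|f\|_{B^{\theta/p}_{p,1}(X)}$. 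The main obstacle is justifying this transfer of $\nu$-integrals to $\mu$-integrals and the discretization of $\int_0^1 dt/t$. Both rest only on local doubling and on the fact that $\nu$ lives on $E$.

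\textbf{Step 3 ($L_p$-Lebesgue points).} For \eqref{eq.main_direct_trace_limit}, let $r\in[r_{k+1},r_k]$. Then
\[
\Big(\fint_{B_r(x)}|f-\phi(x)|^p\,d\mu\Big)^{1/p}\lesssim\Big(\fint_{B_{r_k}(x)}|f-f_k(x)|^p\,d\mu\Big)^{1/p}+|f_k(x)-\phi(x)|.
\]
The second term tends to $0$ $\nu$-a.e. by Step 2. For the first term we write $|f-f_{B_{r_k}(x)}|$ via the telescoping oscillations at scales $j\ge k$, using Lebesgue differentiation in $\mu$. Following Steps 1–2 in $L_p$ form, its $\nu$-a.e. limit is controlled by the tail
\[
G_k(x):=\sum_{j\ge k}\Big(\fint_{B_{r_j}(x)}(\overline\Delta_{2r_j}f)^p\,d\mu\Big)^{1/p},
\]
whose $L_p(\nu)$-norm tends to $0$ as $k\to\infty$. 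Since $G_k$ is monotone in $k$, this forces $G_k\to0$ $\nu$-a.e. A simpler alternative is a density argument: the estimate holds trivially for Lipschitz $f$, and we would control the maximal function $x\mapsto\sup_r\big(\fint_{B_r(x)}|f|^p\,d\mu\big)^{1/p}$ in $L_p(\nu)$ by the same bound as in Step 2. The limit \eqref{eq.main_direct_trace_limit} implies \eqref{eq.intro_trace_def}, so $\phi=\operatorname{Tr}f$, and boundedness of $\operatorname{Tr}$ into $L_p(E,\nu)$ follows.
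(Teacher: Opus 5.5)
Your Steps~1--2 are essentially the paper's argument (Proposition~\ref{Prop.tr_ex_step1} and Step~1 of Theorem~\ref{Th.dir}): the same dyadic telescoping, the same transfer estimate $\|A_t^\mu h\|_{L_p(E,\nu)}\lesssim t^{-\theta/p}\|h\|_{L_p(X)}$ via Tonelli and upper regularity, and the same use of $q=1$.

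The gap is in Step~3, for $p>1$. Your tail $G_k(x)=\sum_{j\ge k}\bigl(\fint_{B_{r_j}(x)}(\overline\Delta_{2r_j}f)^p\,d\mu\bigr)^{1/p}$ averages the scale-$r_j$ oscillation only over the small ball $B_{r_j}(x)$. It governs averages centred at $x$, not the $L_p$-oscillation $\fint_{B_{r_k}(x)}|f-f_k(x)|^p\,d\mu$, and the telescoping you describe does not produce it. Telescoping $f(y)$ at a $\mu$-Lebesgue point $y\in B_{r_k}(x)$ needs averages centred at $y$. It gives $|f(y)-f_k(x)|\lesssim\overline\Delta_{2r_k}f(y)+\sum_{j\ge k}\overline\Delta_{r_j}f(y)$, hence the majorant
\[
H_k(x):=\Bigl(\fint_{B_{r_k}(x)}(\overline\Delta_{2r_k}f)^p\,d\mu\Bigr)^{1/p}+\sum_{j\ge k}\Bigl(\fint_{B_{r_k}(x)}(\overline\Delta_{r_j}f)^p\,d\mu\Bigr)^{1/p},
\]
in which fine-scale oscillations are averaged over the coarse ball.

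To see that $G_k$ cannot replace $H_k$, take a concentrated spike of $f$ at distance $\approx r_k/2$ from $x$. It is invisible to the terms $j\ge k+3$ of $G_k(x)$. The remaining terms see it only through $L_1$-oscillations, which for $p>1$ are much smaller than its $L_p$-contribution. Moreover, $H_k$ is not monotone in $k$ (the averaging ball moves with $k$), so ``$L_p(\nu)$-norm $\to0$ plus monotonicity'' is not available. Since Step~1 gives $\overline\Delta_{r_k}f(x)\lesssim G_k(x)$, your argument does yield the $L_1$ trace property of Definition~\ref{Def.tr}, and the whole theorem when $p=1$.

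The repair is to sum in $k$. The transfer estimate gives $\|H_k\|_{L_p(E,\nu)}\lesssim 2^{k\theta/p}\bigl(\|\overline\Delta_{2r_k}f\|_{L_p(X)}+\sum_{j\ge k}\|\overline\Delta_{r_j}f\|_{L_p(X)}\bigr)$. Interchanging the sums, with $\sum_{k\le j}2^{k\theta/p}\lesssim 2^{j\theta/p}$ because $\theta>0$, yields $\sum_{k\ge k_0}\|H_k\|_{L_p(E,\nu)}\lesssim\|f\|_{b^{\theta/p}_{p,1}(X)}$. Hence $\sum_k H_k<\infty$ and $H_k\to0$ $\nu$-a.e.

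That would be a more elementary route than the paper's. The paper bounds the oscillation term by $t^\theta\fint_{B_t(x)}g^p\,d\mu$ with $g=\sum_m 2^{m\theta/p}\Delta^p_{2^{-m}}f\in L_p(X)$, relying on the $L_p$-oscillation characterization in Remark~\ref{Rm.equiv_smooth}. It then uses the Hausdorff-measure Lebesgue point theorem, Theorem~\ref{Th.leb_points}, together with $\nu\ll\mathcal H_\theta\lfloor_E$.

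Your ``simpler alternative'' does not avoid the difficulty. Step~2 only gives $\bigl\|(\fint_{B_r(\cdot)}|f|^p\,d\mu)^{1/p}\bigr\|_{L_p(E,\nu)}\lesssim r^{-\theta/p}\|f\|_{L_p(X)}$ at a single scale, which blows up as $r\to0$. So controlling the $L_p$-maximal function on $E$ needs exactly the multiscale estimate above. You would also have to justify density of Lipschitz functions in $B^{\theta/p}_{p,1}(X)$.
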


The upper codimension-\(\theta\) regularity assumption is also necessary for the boundedness of the trace operator, provided the ambient space satisfies the local weak Poincar\'e inequality.

\begin{Th}[Necessity of upper regularity]
\label{Th.main_necessity}
Let \(X\) be $p$-admissible, $p\in [1, \infty)$, let \(E\subset X\) be closed, and let \(\nu\) be a locally finite Borel regular measure on \(X\) with \(\operatorname{supp}\nu \subset E\).
Given $\theta\in (0, p)$, assume that the trace operator
\begin{equation}
\label{eq.main_necessity_trace_operator}
    \operatorname{Tr}:
    B^{\theta/p}_{p,1}(X)
    \rightarrow
    L_p(E,\nu)
\end{equation}
is well defined and bounded.
Then \(\nu\) is upper codimension-\(\theta\) regular in the sense of \eqref{eq.main_upper_regular}.
\end{Th}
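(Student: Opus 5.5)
Fix $x\in E$ and $r\in(0,1]$. The plan is to test the assumed trace bound on one well-chosen function: a Lipschitz bump $f$ that equals $1$ on $B_r(x)$. Two facts are then needed. First, its trace equals $1$ on $B_r(x)\cap E$, which gives a lower bound on the right norm by $\nu(B_r(x)\cap E)^{1/p}$. Second, its Besov norm is controlled by $\left(\mu(B_r(x))\,r^{-\theta}\right)^{1/p}$. Combining them gives \eqref{eq.main_upper_regular}.

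Concretely, I take
\[
f(y)=\max\{0,\,1-\operatorname{dist}(y,B_r(x))/r\}.
\]
This $f$ is $1/r$-Lipschitz, equals $1$ on $B_r(x)$ and is supported in $B_{2r}(x)$.

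The trace of $f$ is $1$ on $B_r(x)\cap E$. Indeed, $f$ is continuous, so the limit in \eqref{eq.intro_trace_def} holds at every point with $\operatorname{Tr}f=f$. (For points of $E$ on the boundary sphere, use the open ball $B_r(x)$ and continuity.) Boundedness of $\operatorname{Tr}$ then gives
\[
\nu(B_r(x)\cap E)^{1/p}\le\|\operatorname{Tr}f\|_{L_p(E,\nu)}\le\|\operatorname{Tr}\|\,\|f\|_{B^{\theta/p}_{p,1}(X)}.
\]

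Next I estimate the norm of $f$.
\begin{itemize}
\item By uniform local doubling, $\|f\|_{L_p}\le\mu(B_{2r}(x))^{1/p}\lesssim\mu(B_r(x))^{1/p}$.
\item For $t\le r$, Lipschitzness gives $\overline{\Delta}_tf\le 2t/r$ pointwise. This function vanishes outside $B_{2r+t}(x)\subset B_{3r}(x)$, so $\|\overline{\Delta}_tf\|_{L_p}\lesssim (t/r)\,\mu(B_r(x))^{1/p}$.
\item For $t\in(r,1]$, I use $\overline{\Delta}_tf\le 2\fint_{B_t(\cdot)}|f|\,d\mu$. This is nonzero only on $B_{2r+t}(x)\subset B_{3t}(x)$, where it is at most $2\mu(B_{2r}(x))/\mu(B_t(y))$.
\end{itemize}

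The last case is the main obstacle. One needs $\mu(B_t(y))\gtrsim\mu(B_{3t}(x))$ for $y\in B_{3t}(x)$, which holds by doubling since $t\le1$ keeps everything at bounded scale. Then
\[
\|\overline{\Delta}_tf\|_{L_p}^p\lesssim \mu(B_r(x))^p\,\mu(B_t(x))^{1-p}\le \mu(B_r(x)).
\]
This is the crude bound $\|\overline{\Delta}_tf\|_{L_p}\lesssim\mu(B_r(x))^{1/p}$; a sharper decay in $t$ is unnecessary here.

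Integrating the two regimes:
\[
\int_0^r t^{-\theta/p}\,\frac{t}{r}\,\frac{dt}{t}\simeq r^{-\theta/p},
\qquad
\int_r^1 t^{-\theta/p}\,\frac{dt}{t}\lesssim r^{-\theta/p}.
\]
The first uses $\theta<p$, and the second uses $\theta>0$. Since $r\le1$, the $L_p$ term $\mu(B_r(x))^{1/p}$ is also dominated by $\mu(B_r(x))^{1/p}r^{-\theta/p}$. Therefore
\[
\|f\|_{B^{\theta/p}_{p,1}(X)}\lesssim\left(\mu(B_r(x))\,r^{-\theta}\right)^{1/p}.
\]
Raising to the $p$-th power yields \eqref{eq.main_upper_regular}, with $C$ depending only on $\|\operatorname{Tr}\|$, $p$, $\theta$ and the doubling constants.

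The Poincaré inequality does not seem needed for this argument. It may enter only in the paper's framework, for example to ensure that Lipschitz functions lie in the Besov space, but the direct estimate above handles that.
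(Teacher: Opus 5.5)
Your argument is correct. Its skeleton is exactly Step~2 of the proof of Theorem~\ref{Th.dir}: test the bounded trace operator on the cut-off $\max\{0,1-\operatorname{dist}(\cdot,B_r(x))/r\}$, identify its trace with its pointwise restriction to $E$, and compare $\nu(B_r(x)\cap E)$ with the Besov norm of the cut-off.

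Where you genuinely differ is in estimating $\|f\|_{B^{\theta/p}_{p,1}(X)}$.

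\emph{The paper's route.} It bounds $\|f\|_{L_p(X)}$ and $\|\operatorname{lip}f\|_{L_p(X)}$, then invokes Remark~\ref{Rm.lip_est} with $\delta=r$. The small-scale estimate there, $\|\overline\Delta_t f\|_{L_p(X)}\lesssim t\|\operatorname{lip}f\|_{L_p(X)}$, comes from the local weak $(1,p)$-Poincar\'e inequality.

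\emph{Your route.} For $t\le r$ you use the elementary pointwise bound $\overline\Delta_t f\le 2tL(f)$, together with the fact that $\overline\Delta_t f$ vanishes outside $B_{2r+t}(x)$. For $t>r$ you use a doubling-only bound; this is just \eqref{eq.osc_gen}, so you could cite it instead of re-deriving it. For a bump of this kind, the global Lipschitz constant and the small support already give the right size, so nothing is lost.

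\emph{What each buys.} Your version shows the conclusion holds when $\mu$ is merely uniformly locally doubling. So the $p$-admissibility hypothesis (the Poincar\'e inequality) is superfluous for this necessity direction, which confirms your closing remark. The paper's route is simply shorter in context, because Remark~\ref{Rm.lip_est} is needed anyway for the extension theorem. There only an $L_p$ bound on $\operatorname{lip}\operatorname{Ext}\phi$, rather than a good global Lipschitz constant, is available.
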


The inverse theorem requires the full Ahlfors--David codimension-\(\theta\) regularity of \(E\).

\begin{Th}[Extension theorem]
\label{Th.main_inverse}
Let \(X\) be $p$-admissible, $p\in [1, \infty)$, and let \(E\subset X\) be an Ahlfors--David codimension-\(\theta\) regular set, $\theta\in (0, p)$.
Then there exists a bounded nonlinear extension operator
\begin{equation}
\label{eq.main_extension_operator}
    \operatorname{Ext}:
    L_p(E,\mathcal H_\theta\lfloor_E)
    \rightarrow
    B^{\theta/p}_{p,1}(X)
\end{equation}
which is a right inverse to the trace operator, that is,
\begin{equation}
\label{eq.main_right_inverse}
    \operatorname{Tr}\circ\operatorname{Ext}
    =
    \operatorname{Id}_{L_p(E,\mathcal H_\theta\lfloor_E)}.
\end{equation}
\end{Th}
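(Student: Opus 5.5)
The plan is to adapt the Burenkov--Gol'dman/Gagliardo nonlinear construction to the metric setting, using a Whitney-type covering of $X\setminus E$ and dyadic-scale approximations of $\phi$. Since $E$ is codimension-$\theta$ regular with $\theta>0$, $\mu(E)=0$. We may replace $\mathcal H_\theta\lfloor_E$ by $\nu$, because the two measures are comparable.

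\textbf{Step 1: a nonlinear multiscale decomposition of $\phi$.} Fix $\phi\in L_p(E,\nu)$. For $k\ge0$ let $\phi_k$ be the averages of $\phi$ over a maximal $2^{-k}$-separated net in $E$, glued by a partition of unity subordinate to the balls $B_{2^{-k}}(x_i)$. Then $\phi_k$ is Lipschitz on scale $2^{-k}$ and $\phi_k\to\phi$ in $L_p(E,\nu)$ and $\nu$-a.e. The nonlinear step, as in Gagliardo, is to choose a \emph{sparse} increasing sequence of integers $k_j=k_j(\phi)$ such that
\begin{equation*}
\|\phi-\phi_{k_j}\|_{L_p(E,\nu)}\le 2^{-j}\|\phi\|_{L_p(E,\nu)} .
\end{equation*}
Since the $k_j$ may be taken as sparse as we like, we also demand $k_{j+1}-k_j\ge j$, or any rapid growth needed later.

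\textbf{Step 2: the extension.} Let $\eta_k(x)=\max\{0,1-2^{k}\operatorname{dist}(x,E)\}$, a Lipschitz bump of width $2^{-k}$ around $E$. Extend each $\phi_k$ to a neighbourhood of $E$ by the Whitney-type extension $\widetilde\phi_k$, which is Lipschitz with constant $\lesssim 2^{k}\times$local oscillation. Then define
\begin{equation*}
\operatorname{Ext}\phi=\widetilde\phi_{k_0}\eta_{k_0}+\sum_{j\ge1}(\widetilde\phi_{k_j}-\widetilde\phi_{k_{j-1}})\eta_{k_j}.
\end{equation*}
The pieces sit on shrinking tubes $U_j=\{\operatorname{dist}(\cdot,E)<2^{-k_j}\}$. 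By lower codimension regularity and doubling, $\mu(U_j\cap B)\lesssim 2^{-k_j\theta}\nu(\ldots)$, so
\begin{equation*}
\|g\,\eta_k\|_{L_p}^p\lesssim 2^{-k\theta}\|g\|_{L_p(E,\nu)}^p
\end{equation*}
for functions $g$ that are essentially constant on $2^{-k}$-balls.

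\textbf{Step 3: the Besov estimate.} This is the main obstacle. Set $\psi_j=(\widetilde\phi_{k_j}-\widetilde\phi_{k_{j-1}})\eta_{k_j}$. I will show $\|\psi_j\|_{B^{\theta/p}_{p,1}}\lesssim \|\phi_{k_j}-\phi_{k_{j-1}}\|_{L_p(\nu)}+(\text{small})$, which is summable by Step 1. For $\psi_j$, bound $\|\overline\Delta_t\psi_j\|_{L_p}$ in two ways:
\begin{itemize}
\item for $t\ge2^{-k_j}$ use the trivial bound $2\|\psi_j\|_{L_p}\lesssim 2^{-k_j\theta/p}\|\cdot\|_{L_p(\nu)}$, but over the enlarged tube of width $t$, giving $\lesssim (t^{\theta}2^{-k_j\theta})^{0}\cdots$;
\item for $t<2^{-k_j}$ use the Lipschitz bound $t\,2^{k_j}\|\psi_j\|_{L_p}$.
\end{itemize}
Integrating $t^{-\theta/p}$ against $dt/t$ splits at $t=2^{-k_j}$ into geometric pieces of size $2^{k_j\theta/p}\|\psi_j\|_{L_p}$. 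For $t\ge 2^{-k_j}$ the pieces give a convergent integral $\int_{2^{-k_j}}^1 t^{-\theta/p}dt/t$ precisely because $\theta/p>0$, so the $L_1$-in-$dt/t$ integrability (that is, $q=1$) is what makes the sum work.

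The delicate point is that $\widetilde\phi_{k_j}$ has Lipschitz constant controlled only by oscillations at scale $2^{-k_j}$, which involves $\phi_{k_{j-1}}$ at the coarser scale. Here the sparseness $k_j-k_{j-1}\to\infty$ and the estimate $\|\phi_{k}-\phi_{k-1}\|\lesssim$ local oscillation are used. If needed I would define $\operatorname{Ext}$ instead through a Whitney decomposition with $\phi_{k_j}$ frozen on Whitney balls of size $\sim 2^{-k}$ for $k_{j-1}<k\le k_j$, handling transitions by the partition of unity. Also $\|\operatorname{Ext}\phi\|_{L_p}\lesssim\|\phi\|_{L_p(\nu)}$ by the tube estimate. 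The Poincaré inequality is likely needed only to compare $\overline\Delta_t$ of Lipschitz functions with their upper gradients.

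\textbf{Step 4: right inverse.} For $\nu$-a.e.\ $x$, on $B_r(x)$ with $2^{-k_{j+1}}\le r<2^{-k_j}$, $\operatorname{Ext}\phi$ equals $\widetilde\phi_{k_j}$ plus a tail. Averaging and using $\phi_{k_j}(x)\to\phi(x)$ at Lebesgue points of $\phi$ with respect to the doubling measure $\nu$, together with maximal-function control of the tail, gives \eqref{eq.intro_trace_def} with value $\phi(x)$. Hence $\operatorname{Tr}\circ\operatorname{Ext}=\operatorname{Id}$.
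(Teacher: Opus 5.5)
Your construction takes a genuinely different route from the paper's, and its architecture can be made to work. The ingredients are: canonical dyadic averages $\phi_k$, a $\phi$-dependent sparse sequence $k_j$, Whitney-average extensions cut off by $\eta_{k_j}$, the tube estimate from lower regularity, and the interpolation of Remark~\ref{Rm.lip_est} at $\delta=2^{-k_j}$.

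\emph{The gap is in Step~3, which you rightly call the crux; it does not close as written.} The large-$t$ half is fine. For $t<2^{-k_j}$, however, you use $\|\overline\Delta_t\psi_j\|_{L_p}\lesssim t\,2^{k_j}\|\psi_j\|_{L_p}$. This is a Bernstein-type inequality $\|\operatorname{lip}\psi_j\|_{L_p(X)}\lesssim 2^{k_j}\|\psi_j\|_{L_p(X)}$, and your construction does not provide it.

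What the Whitney-average extension actually gives is the following. Put $g_j:=\phi_{k_j}-\phi_{k_{j-1}}$ and $\mathcal O_k\phi(\xi):=\fint_{B_{C2^{-k}}(\xi)}|\phi-\phi_{B_{C2^{-k}}(\xi)}|\,d\nu$. Then $\operatorname{Lip}(\phi_k;B_{2^{-k}}(\xi)\cap E)\lesssim 2^k\mathcal O_k\phi(\xi)$. Summing over Whitney layers with the weights $2^{-l\theta}$ coming from lower regularity, one gets
\[
\|\operatorname{lip}\psi_j\|_{L_p(X)}\lesssim 2^{k_j(1-\frac{\theta}{p})}\Bigl(\|g_j\|_{L_p(\nu)}+\|\mathcal O_{k_j}\phi\|_{L_p(\nu)}+2^{k_{j-1}-k_j}\|\phi\|_{L_p(\nu)}\Bigr).
\]
The middle term measures the oscillation of $\phi$ itself at scale $2^{-k_j}$. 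It is not controlled by $\|\psi_j\|_{L_p}$. The estimate you invoke, $\|\phi_k-\phi_{k-1}\|\lesssim$ local oscillation, runs in the wrong direction: you need an upper bound on the oscillation, not on the difference.

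\emph{Fix.} Build this into the nonlinear choice of scales. Since $\mathcal O_k$ is uniformly bounded on $L_p(\nu)$ and continuous boundedly supported functions are dense, $\|\mathcal O_k\phi\|_{L_p(\nu)}\to0$. So choose $k_j$ so that additionally $\|\mathcal O_{k_j}\phi\|_{L_p(\nu)}\le 2^{-j}\|\phi\|_{L_p(\nu)}$ and $k_j-k_{j-1}\ge j$. (Alternatively, compare with the constant $\phi_{k_{j-1}}(\xi)$ to get $\|\mathcal O_{k_j}\phi\|\lesssim\|\phi-\phi_{k_{j-1}}\|+2^{k_{j-1}-k_j}\|\phi\|$.) Remark~\ref{Rm.lip_est} with $\delta=2^{-k_j}$ then gives $\|\psi_j\|_{B^{\theta/p}_{p,1}(X)}\lesssim 2^{-j}\|\phi\|_{L_p(\nu)}$, and the series converges.

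\emph{Comparison with the paper.} The paper never needs an $L_p$ oscillation estimate.
\begin{itemize}
\item It writes $\phi=\sum_l u_l$ with arbitrary boundedly supported Lipschitz $u_l$ satisfying $\|u_l\|\lesssim 2^{-l}\|\phi\|$.
\item It extends each $u_l$ by the Whitney sum over levels $j\ge k(u_l)$, truncating via $\Psi_k$ rather than a distance cutoff.
\item The crude bound $|\phi_{j,i}-\phi_{l,m}|\lesssim r_{l,m}L(u)$ produces the error $2^{-k}L(u)\bigl(\mathcal H_\theta\lfloor_E(B_{c_0R}(\overline x))\bigr)^{1/p}$, and the choice \eqref{eq.k_phi_choice} makes this at most $\|u\|$.
\end{itemize}
So the nonlinearity sits in $k(u_l)$ exactly as in your $k_j$. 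The paper pays with density and bounded support; you pay with the oscillation estimate, in exchange for canonical approximants.

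\emph{Step~4.} Your pointwise maximal-function argument needs care. For $p=1$ the maximal function is only of weak type, so you would need a Borel--Cantelli argument using $\|g_j\|\lesssim 2^{-j}\|\phi\|$. Also, for $r\approx 2^{-k_j}$ the average over $B_r(x)$ sees a mixture of $\widetilde\phi_{k_{j-1}}$ and $\widetilde\phi_{k_j}$, not $\widetilde\phi_{k_j}$ alone. The paper's route is shorter and applies verbatim to your construction:
\begin{itemize}
\item the partial sums $F_N$ are continuous up to $E$ with $F_N|_E=\phi_{k_N}$, so $\operatorname{Tr}F_N=\phi_{k_N}$;
\item $F_N\to\operatorname{Ext}\phi$ in $B^{\theta/p}_{p,1}(X)$;
\item $\operatorname{Tr}$ is bounded by Theorem~\ref{Th.dir}.
\end{itemize}
Hence $\operatorname{Tr}\operatorname{Ext}\phi=\lim_N\phi_{k_N}=\phi$.
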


Combining Theorems~\ref{Th.main_direct} and~\ref{Th.main_inverse}, we obtain the following limiting case of the trace theorem.

\begin{Ca}
\label{Ca.main_trace_identification}
Let \(X\) be $p$-admissible, $p\in [1, \infty)$, and let \(E\subset X\) be an Ahlfors--David codimension-\(\theta\) regular set, $\theta\in (0, p)$.
Then
\begin{equation}
\label{eq.main_trace_identification}
    B^{\theta/p}_{p,1}(X)\big|_E
    =
    L_p(E,\mathcal H_\theta\lfloor_E)
\end{equation}
with equivalence of the quotient-space norm and the \(L_p(E,\mathcal H_\theta\lfloor_E)\)-norm.
\end{Ca}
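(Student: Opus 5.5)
The corollary follows by combining Theorem~\ref{Th.main_direct} with Theorem~\ref{Th.main_inverse}, together with the comparability of $\nu$ and $\mathcal H_\theta\lfloor_E$ for Ahlfors--David codimension-$\theta$ regular sets (Section~2).

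\emph{Step 1: set-up and the upper bound.} Since $E$ is Ahlfors--David codimension-$\theta$ regular, the measure $\nu=\mathcal H_\theta\lfloor_E$ (or any comparable regular measure) satisfies in particular the upper estimate \eqref{eq.main_upper_regular}. Also, $p$-admissibility includes uniform local doubling. Hence Theorem~\ref{Th.main_direct} applies: the trace operator is well defined on $B^{\theta/p}_{p,1}(X)$, so the trace-space $B^{\theta/p}_{p,1}(X)\big|_E$ makes sense. Moreover, for some constant $C$,
\[
\|\operatorname{Tr}f\|_{L_p(E,\mathcal H_\theta\lfloor_E)}\le C\|f\|_{B^{\theta/p}_{p,1}(X)}.
\]
Take $\phi$ in the trace-space and pass to the infimum over all $f$ with $\operatorname{Tr}f=\phi$. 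This gives the inclusion $B^{\theta/p}_{p,1}(X)\big|_E\subset L_p(E,\mathcal H_\theta\lfloor_E)$ and the bound $\|\phi\|_{L_p}\le C\|\phi\|_{B^{\theta/p}_{p,1}(X)|_E}$.

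One point needs care: the trace is defined $\nu$-a.e. through \eqref{eq.intro_trace_def}. If $\nu$ is only comparable to $\mathcal H_\theta\lfloor_E$, the two measures have the same null sets, since comparability on balls implies mutual absolute continuity via a covering argument. So equivalence classes and $L_p$ spaces coincide, with equivalent norms.

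\emph{Step 2: the reverse inclusion and lower bound.} Given $\phi\in L_p(E,\mathcal H_\theta\lfloor_E)$, set $f=\operatorname{Ext}\phi$ using the operator from Theorem~\ref{Th.main_inverse}. Then $f\in B^{\theta/p}_{p,1}(X)$ and $\operatorname{Tr}f=\phi$ by \eqref{eq.main_right_inverse}, so $\phi$ lies in the trace-space. Boundedness of $\operatorname{Ext}$, understood for a nonlinear operator as $\|\operatorname{Ext}\phi\|\le C\|\phi\|_{L_p}$, gives
\[
\|\phi\|_{B^{\theta/p}_{p,1}(X)|_E}\le\|\operatorname{Ext}\phi\|_{B^{\theta/p}_{p,1}(X)}\le C\|\phi\|_{L_p(E,\mathcal H_\theta\lfloor_E)}.
\]

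Both parts are now immediate. The only point needing any attention is the identification of null sets and norms between $\nu$ and $\mathcal H_\theta\lfloor_E$, which I take from Section~2.
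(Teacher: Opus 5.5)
Your proposal is correct and follows the paper's own argument: Theorem~\ref{Th.main_direct}, applied with $\nu=\mathcal H_\theta\lfloor_E$ (upper codimension-$\theta$ regular by the Ahlfors--David condition), gives the inclusion and $\|\phi\|_{L_p(E,\mathcal H_\theta\lfloor_E)}\lesssim$ the quotient norm. Theorem~\ref{Th.main_inverse} gives the reverse inclusion and the opposite estimate. Your side remark about a general comparable measure $\nu$ is harmless but unnecessary, since the corollary is stated directly for $\mathcal H_\theta\lfloor_E$.
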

\par
As a special case, we apply the general theory to weighted \(K\)-regular trees.
Let \(K\ge1\), and let \(X\) be the metric graph associated with a rooted
\(K\)-regular tree. We equip \(X\) with a radially symmetric measure and a path metric:
\begin{equation}
\label{eq.intro_tree_measure_metric}
    d\mu(x)=w(|x|)\,d\ell_G(x),
    \qquad
    d_\lambda(x,y)=\int_{[x,y]}\lambda(|z|)\,d\ell_G(z),
\end{equation}
where \(\ell_G\) is the one-dimensional length measure on the metric graph and
\(|x|\) denotes the graph distance from \(x\) to the root. We assume that $\mu(X)<\infty$ and $\operatorname{diam}_\lambda(X)<\infty$.
Then the completion \(\overline X\) is obtained from \(X\) by adding the boundary
\(\partial X\), which can be identified with the family of infinite geodesic rays
starting from the root. The boundary carries the natural probability measure \(\nu\), obtained by distributing unit mass
uniformly on $\partial X$.

There are two natural averaged trace operators on \(K\)-regular trees. The first one is
the metric trace, defined by averages over balls centered at boundary points in the
completed space \(\overline X\). The second one is the subtree trace, defined by averages
over subtrees approaching the boundary along a fixed geodesic ray. In Section~5 we prove
that, for the Besov functions considered below, these two trace operators coincide
\(\nu\)-almost everywhere. We denote their common value by \(\operatorname{Tr}f\).

An important feature of the trace problem on \(K\)-regular trees is that the boundary
measure \(\nu\) is fixed in advance. Thus, unlike in the general metric-measure setting
above, the regularity assumptions are formulated in terms of the ambient measure \(\mu\)
and the metric \(d_\lambda\).
For \(r\ge0\), put
\begin{equation}
\label{eq.intro_tree_tail_functions}
    M(r)
    :=
    \int_r^\infty w(t)K^{j(t)}\,dt,
    \qquad
    \rho(r)
    :=
    \int_r^\infty \lambda(t)\,dt,
\end{equation}
where \(j(t)=[t]+1\). We say that \((X,d_\lambda,\mu)\) is upper
\(\theta\)-regular if, for some constant $C>0$,
\begin{equation}
    M(r)\ge C \rho(r)^\theta,
    \qquad r\ge0,
\end{equation}
and lower \(\theta\)-regular if, for some constant $C>0$,
\begin{equation}
    M(r)\le C\rho(r)^\theta,
    \qquad r\ge0.
\end{equation}
These conditions are the radial counterparts of the upper and lower
codimension-\(\theta\) regularity assumptions from Section~2.

The tree version of the trace theorem is as follows.

\begin{Th}
\label{Th.trees_main}
Let \(p\in[1,\infty)\) and \(\theta\in(0,p)\). Let \(X\) be a weighted
\(K\)-regular tree as above, and assume that \(\mu\) is doubling on
\((X,d_\lambda)\).

If \((X,d_\lambda,\mu)\) is upper \(\theta\)-regular, then every
\(f\in B^{\theta/p}_{p,1}(X)\) has a trace to \(\partial X\),
\(\operatorname{Tr}f\in L_p(\partial X,\nu)\), and the trace operator
\begin{equation}
    \operatorname{Tr}:
    B^{\theta/p}_{p,1}(X)
    \rightarrow
    L_p(\partial X,\nu)
\end{equation}
is bounded.

Assume, in addition, that \((X,d_\lambda,\mu)\) supports a weak
\((1,p)\)-Poincar\'e inequality. Then, for every \(s\in(0,1)\), the following
conditions are equivalent:
\begin{enumerate}
    \item for every \(f\in B^s_{p,1}(X)\), the trace \(\operatorname{Tr}f(\xi)\) is
    finite for \(\nu\)-almost every \(\xi\in\partial X\);
    \item \((X,d_\lambda,\mu)\) is upper \(ps\)-regular.
\end{enumerate}

Finally, if \((X,d_\lambda,\mu)\) is both upper and lower \(\theta\)-regular and
supports a weak \((1,p)\)-Poincar\'e inequality, then there exists a bounded nonlinear
extension operator
\begin{equation}
    \operatorname{Ext}:
    L_p(\partial X,\nu)
    \rightarrow
    B^{\theta/p}_{p,1}(X)
\end{equation}
which is a right inverse to the trace operator.
\end{Th}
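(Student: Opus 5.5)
The plan is to deduce Theorem~\ref{Th.trees_main} from Theorems~\ref{Th.main_direct}, \ref{Th.main_necessity} and~\ref{Th.main_inverse}, applied in the completed space $(\overline X,d_\lambda,\mu)$ with $E=\partial X$ and the boundary measure $\nu$. The first step is geometric. For $\xi\in\partial X$ and small $r>0$, I will show that the ball $B_r(\xi)$ in $\overline X$ is comparable to a subtree $X_v$ rooted at the vertex $v$ of the geodesic ray of $\xi$ at level $|v|=n$, where $n$ is chosen so that $\rho(n)\approx r$. The doubling property of $\mu$ should allow us to pass between $r$ and $\rho(n)$ up to constants. Comparing balls with subtrees gives
\begin{equation*}
    \mu(B_r(\xi))\approx \mu(X_v)=K^{-n}M(n)\ (\text{up to the normalisation of } j),
    \qquad
    \nu(B_r(\xi))\approx K^{-n}.
\end{equation*}
Hence $\nu(B_r(\xi))\le C\mu(B_r(\xi))r^{-\theta}$ is equivalent to $M(n)\ge c\,\rho(n)^\theta$, which is upper $\theta$-regularity, and the reverse inequality corresponds to lower $\theta$-regularity. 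This gives the dictionary between the radial conditions and the codimension-$\theta$ conditions of Section~2. I must also check two further points. First, $\mu(\partial X)=0$, so that $B^s_{p,1}(\overline X)=B^s_{p,1}(X)$. Second, doubling of $\mu$ and the weak Poincar\'e inequality transfer from $X$ to $\overline X$ by density, so that $\overline X$ is $p$-admissible.

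With this dictionary, the first assertion follows directly from Theorem~\ref{Th.main_direct} for the metric trace. The identification with the subtree trace, announced for Section~5, comes from the same comparability of balls and subtrees. I will apply the Lebesgue-point statement \eqref{eq.main_direct_trace_limit} and bound the subtree average by $C\fint_{B_{Cr}(\xi)}|f-\operatorname{Tr}f(\xi)|\,d\mu$ using doubling. The last assertion follows from Theorem~\ref{Th.main_inverse}. Since $\nu$ is Ahlfors--David regular, it is comparable to $\mathcal H_\theta\lfloor_{\partial X}$, so $L_p(\partial X,\nu)=L_p(\partial X,\mathcal H_\theta\lfloor_{\partial X})$ with equivalent norms.

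For the equivalence, the implication (2)$\Rightarrow$(1) is the first part with $\theta=ps\in(0,p)$. The main obstacle is (1)$\Rightarrow$(2). Here we only assume finiteness a.e., not boundedness, so Theorem~\ref{Th.main_necessity} does not apply verbatim. I would argue by contradiction using the radial symmetry. If upper $ps$-regularity fails, there are levels $n_k\to\infty$ with $M(n_k)\le 2^{-k}\rho(n_k)^{ps}$; I would pass to a sparse subsequence so that the $\rho(n_k)$ decay geometrically. Then I would take radial bumps $f_k$ equal to $1$ on $\{|x|\ge n_k\}$, cut off linearly in $d_\lambda$ over a collar of width about $\rho(n_k)$. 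The Poincar\'e inequality and doubling should give $\|f_k\|_{B^s_{p,1}}\lesssim \bigl(M(n_k)\rho(n_k)^{-ps}\bigr)^{1/p}$, which is the same test-function estimate as in the proof of Theorem~\ref{Th.main_necessity}, and this is summable along a subsequence.

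The function $f=\sum_k a_kf_k$, with $a_k\to\infty$ chosen so that $\sum a_k\|f_k\|<\infty$, then belongs to $B^s_{p,1}(X)$. However, its trace equals $\sum a_k=\infty$ at every boundary point, because each $f_k\equiv1$ near $\partial X$. This contradicts (1). The delicate part is the Besov estimate of the radial cutoff: one must control $\overline\Delta_tf_k$ for $t$ both below and above $\rho(n_k)$ using only doubling.
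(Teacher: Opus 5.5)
Your proposal is correct and follows the paper's overall architecture. The ball--subtree comparison of Remark~\ref{Rm.reg_tr} and Lemma~\ref{Lm.completion_admissibility} reduces the boundedness and extension parts to Theorems~\ref{Th.main_direct} and~\ref{Th.main_inverse}. The implication (1)$\Rightarrow$(2) is then proved with radial Lipschitz test functions, estimated by Remark~\ref{Rm.lip_est} and summed with growing coefficients. Two components differ from the paper.

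\emph{Identification of the two traces.} The paper's Lemma~\ref{Lm.tr_equiv} builds the fractional Haj\l{}asz gradients of Lemma~\ref{Lm.special_frac_gradient} and applies Theorem~\ref{Th.leb_points} to compare the subtree and ball averages directly. You instead combine the $L_p$-Lebesgue-point conclusion \eqref{eq.main_direct_trace_limit} with $\Gamma_{x_r(\xi)}\subset B^{\overline X}_{2r}(\xi)$ and $\mu(B^{\overline X}_{2r}(\xi))\lesssim\mu(\Gamma_{x_r(\xi)})$. This is shorter and makes Lemma~\ref{Lm.special_frac_gradient} unnecessary for this theorem. The paper's route yields asymptotic equality of the two averages whether or not either limit exists, but under the present hypotheses both limits exist anyway.

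\emph{Test functions for the necessity.} You use plateaus equal to $1$ on whole tails, where the paper uses bumps on disjoint strips. With plateaus the divergence is immediate by monotonicity: $f\ge\sum_{k\le N}a_k$ on $X^{n_N}$, so even $a_k=1$ works and geometric decay of $\rho(n_k)$ is not needed. The paper instead needs Lemma~\ref{Lm.doubl_strip} to see that the subtree averages blow up. The price is that your collar lies above level $n_k$. For the $L_p$ bound you therefore need $M(t_k)\lesssim M(n_k)$, where $\rho(t_k)=2\rho(n_k)$. This follows from $\mu(\Gamma_{x_{2a}})\lesssim\mu(\Gamma_{x_a})$ together with $\nu(\partial\Gamma_{x_a})\le\nu(\partial\Gamma_{x_{2a}})$. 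Alternatively, place the collar inside $X^{n_k}$.

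\emph{One correction.} Doubling does not force $\rho(n)\approx\rho(n+1)$. For $K=1$, take $w=\lambda$ with $\rho(n+1)/\rho(n)\to0$: the space is then isometric to an interval with Lebesgue measure, which is doubling. So subtrees rooted at vertices cannot represent every radius, and the dictionary is needed at all real levels anyway. Use $\Gamma_x$ rooted at $x=x_r(\xi)$ with $\rho(|x|)=r$, for which $\mu(\Gamma_x)/\nu(\partial\Gamma_x)=M(|x|)$, as the paper does.
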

\subsection{Organization of the paper} This paper is organized as follows.
\par
\begin{itemize}
    \item In Section 2, we fix notation, give definitions, and state auxiliary results.
    \item In Section 3, we prove the direct trace theorem. In particular, we prove that the
    trace operator is well defined and bounded under the upper codimension-$\theta$ regularity
    assumption, and that this assumption is necessary for boundedness when $X$ is $p$-admissible.
    \item In Section 4, we prove the inverse trace theorem by constructing a bounded nonlinear
    extension operator and verifying the trace identity.
    \item In Section 5, we discuss traces of Besov spaces on $K$-regular trees.
\end{itemize}
\section{Preliminaries}
In this section we fix notation and collect several auxiliary facts used throughout the paper.
\par
Throughout the paper, $C$ denotes a positive inessential constant whose value may change from line to line. If the dependence on parameters is important, we write $C=C(a,b,c,\ldots)$. We write $A\lesssim B$ if $A\le CB$, and $A\approx B$ if both $A\lesssim B$ and $B\lesssim A$.

\subsection{Metric measure spaces and regular sets}

Let $(X,d)$ be a complete separable metric space. For $x\in X$ and $r>0$, we denote by $B_r(x)$ the open ball centered at $x$ of radius $r$, and by $\overline B_r(x)$ the corresponding closed ball. If $B=B_r(x)$ and $c>0$, we write $cB:=B_{cr}(x)$. When it is necessary to indicate the ambient space explicitly, we write $B_r^X(x)$.
For a set $G\subset X$, we denote $\operatorname{dist}(x,G):=\inf_{y\in G}d(x,y)$,
and put $B_r(G):=\{x\in X:\operatorname{dist}(x,G)<r\}$.
\par
By $\operatorname{LIP}(X)$ we denote the space of real-valued Lipschitz functions on $X$. If $f\in\operatorname{LIP}(X)$, then $L(f)$ denotes its Lipschitz constant. The local Lipschitz constant of $f$ is defined by
\begin{equation}
    \operatorname{lip}f(x)
    =
    \begin{cases}
        \displaystyle
        \limsup\limits_{y\to x}
        \frac{|f(y)-f(x)|}{d(y,x)},
        & \text{if $x$ is an accumulation point},\\[1.2ex]
        0,
        & \text{if $x$ is isolated}.
    \end{cases}
\end{equation}
\par
In what follows, a \emph{measure} $\mu$ on a complete separable metric space $X$ means a Borel regular nonzero locally finite outer measure on $X$. More precisely, $\operatorname{supp}\mu\neq\emptyset$ and $\mu(B_r(x))<\infty$ for all $(x, r) \in X\times(0, \infty)$.
Let $p\in[1,\infty]$, let $\mu$ be a measure on $X$, and let $G\subset X$ be measurable. We denote by $L_p(G,\mu)$ the space of all equivalence classes of measurable functions $f:G\to\mathbb R$ such that
\begin{equation}
    \|f\|_{L_p(G,\mu)}
    :=
    \left(
    \int\limits_G |f(x)|^p\,d\mu(x)
    \right)^{1/p}
    <\infty,
\end{equation}
with the standard modification when $p=\infty$.
As usual, $L_p^{\operatorname{loc}}(X,\mu)$ denotes the space of all equivalence classes of measurable functions $f:X\to\mathbb R$ such that $f\in L_p(B_r(x),\mu)$ for every $(x, r) \in X\times(0, \infty)$.
Given a measurable set $G\subset X$ with $0<\mu(G)<\infty$, we write, for each $f\in L_1(G,\mu)$,
\begin{equation}
    \fint\limits_G f(y)\,d\mu(y)
    :=
    \frac{1}{\mu(G)}
    \int\limits_G f(y)\,d\mu(y).
\end{equation}
Moreover, if $G=B_r(x)$, we write
\begin{equation}
    A_r^\mu f(x)
    :=
    \fint\limits_{B_r(x)} f(y)\,d\mu(y).
\end{equation}
\par
If \(E\subset X\) is closed and \(\nu\) is a Borel regular measure on \(X\) with
\(\operatorname{supp}\nu\subset E\), we shall use the notation \(L_p(E,\nu)\) and
\(L_0(E,\nu)\) for the spaces of \(\nu\)-equivalence classes of measurable functions
on \(E\). Equivalently, such functions may be viewed as functions on \(X\) defined
\(\nu\)-almost everywhere. Since \(\operatorname{supp}\nu\subset E\), we shall freely
write \(\nu(B_r(x))\) instead of \(\nu(B_r(x)\cap E)\).
\par
A \emph{metric measure space} is a triple $(X,d,\mu)$, where $(X,d)$ is a complete separable metric space and $\mu$ is a Borel regular measure giving positive finite mass to every ball. To develop a useful theory of function spaces on a metric measure space, one needs compatibility assumptions relating the distance and the measure.

\begin{Def}
Given $q\in[1,\infty)$, we say that a metric measure space $(X,d,\mu)$ is $q$-admissible and write $(X,d,\mu)\in\mathfrak U_q$ if the following two conditions hold.
\begin{enumerate}
    \item The measure $\mu$ is uniformly locally doubling, i.e., for every $R>0$ there exists $C_\mu(R)>0$ such that
    \begin{equation}
        \mu(B_{2r}(x))
        \le
        C_\mu(R)\mu(B_r(x)),
        \qquad (x, r)\in X\times(0,R].
    \end{equation}

    \item The space $(X,d,\mu)$ supports a local weak $(1,q)$-Poincar\'e inequality, i.e., for every $R>0$ there exist constants $C=C(R)>0$ and $\lambda=\lambda(R)\ge1$ such that, for every $f\in\operatorname{LIP}(X)$,
    \begin{equation}
        \fint\limits_{B_r(x)}
        |f(y)-A_r^\mu f(x)|\,d\mu(y)
        \le
        Cr
        \left(
        \fint\limits_{B_{\lambda r}(x)}
        (\operatorname{lip}f(y))^q\,d\mu(y)
        \right)^{1/q}
    \end{equation}
    for all $(x, r)\in X\times(0,R]$.
\end{enumerate}
When no confusion is possible, we shall simply write $X\in\mathfrak U_q$.
\end{Def}

Occasionally, we shall use the global counterparts of the above assumptions, where the same conditions hold with constants independent of $R$. In this case we write $X\in\mathfrak U_q^{\operatorname{gl}}$.
\par
The following elementary consequence of the local doubling property will be used repeatedly.

\begin{Lm}
\label{Lm.doubl_meas}
Let $(X,d,\mu)$ be a metric measure space with uniformly locally doubling measure. Then, for every $R>0$, there exist constants $c_1,c_2>0$, depending only on $R$, such that
\begin{equation}
    c_1\mu(B_r(x))
    \le
    \mu(B_r(y))
    \le
    c_2\mu(B_r(x))
\end{equation}
for all $r\in(0,R]$, all $x\in X$, and all $y\in B_r(x)$.
\end{Lm}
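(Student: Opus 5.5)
The plan is to derive both inequalities from the doubling inequality, applied a bounded number of times, using only that $y$ lies in $B_r(x)$. Fix $R>0$ and let $C_\mu=C_\mu(2R)$ be the local doubling constant valid for radii up to $2R$. Let $r\in(0,R]$, $x\in X$ and $y\in B_r(x)$, so $d(x,y)<r$.

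For the upper bound $\mu(B_r(y))\le c_2\mu(B_r(x))$, we use the inclusion of balls. If $z\in B_r(y)$, then $d(z,x)\le d(z,y)+d(y,x)<2r$, so $B_r(y)\subset B_{2r}(x)$. Since $r\le R\le 2R$, the doubling inequality at radius $r$ gives
\[
\mu(B_r(y))\le \mu(B_{2r}(x))\le C_\mu\,\mu(B_r(x)).
\]
Hence we may take $c_2=C_\mu$.

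For the lower bound, the same argument applies with the roles of $x$ and $y$ exchanged. Because $d(x,y)<r$, we also have $x\in B_r(y)$, so $B_r(x)\subset B_{2r}(y)$. Doubling at $y$ with radius $r\le R$ then yields
\[
\mu(B_r(x))\le C_\mu\,\mu(B_r(y)).
\]
Hence we may take $c_1=C_\mu^{-1}$. The doubling hypothesis is applied only at radii $r\le R$, so $C_\mu(R)$ itself already suffices, and both constants depend only on $R$.

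The only points to check are that every ball has positive and finite measure, so that dividing by $\mu(B_r(\cdot))$ is legitimate, and that the doubling constant is uniform over all centers. Both are part of the standing assumptions: $\mu$ gives positive finite mass to every ball, and the doubling is uniform in the center. No step poses a real obstacle; the one thing to be careful about is applying doubling at a radius inside the admissible range $(0,R]$.
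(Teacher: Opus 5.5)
Your proof is correct and is essentially the paper's argument: both use $B_r(y)\subset B_{2r}(x)$ and $B_r(x)\subset B_{2r}(y)$ (from $d(x,y)<r$) together with a single application of doubling at radius $r\le R$. As you note, $C_\mu(R)$ suffices; the paper uses the same constant.
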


\begin{proof}
If $y\in B_r(x)$, then $B_r(x)\subset B_{2r}(y)$. Hence
\begin{equation}
    \mu(B_r(x))
    \le
    \mu(B_{2r}(y))
    \le
    C_\mu(R)\mu(B_r(y)).
\end{equation}
The reverse estimate follows by interchanging $x$ and $y$.
\end{proof}
\par
We shall also use the following finite-overlap consequence of local doubling.

\begin{Lm}[\cite{tyul}, Proposition 2.12]
\label{Lm.loc_d_m}
Let $(X,d,\mu)$ be a metric measure space with uniformly locally doubling measure. Then, for every $R>0$ and $c\ge1$, every ball $B_{cR}(x)$ contains at most
\(
    N_\mu(R,c)
    =
    C_\mu((c+1)R)^{\log_2(2c)+1}+1
\)
pairwise disjoint balls of radius $R$.
\end{Lm}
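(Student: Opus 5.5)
The plan is a volume comparison: every small ball inside $B_{cR}(x)$ has measure comparable to $\mu(B_{cR}(x))$, with constants depending only on $c$ and the local doubling constant. This caps the number of disjoint small balls.

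Let $\{B_R(x_i)\}_{i=1}^N$ be pairwise disjoint balls of radius $R$ contained in $B_{cR}(x)$. Since the balls lie in $B_{cR}(x)$, we have $d(x,x_i)<cR$, and therefore
\[
B_{cR}(x)\subset B_{(2c)R}(x_i)\subset B_{2^{k}R}(x_i),
\]
where $k=\lceil\log_2(2c)\rceil\le \log_2(2c)+1$.

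We now iterate the doubling inequality $\mu(B_{2s}(x_i))\le C_\mu((c+1)R)\,\mu(B_s(x_i))$. This is legitimate as long as every radius $s$ used is at most $(c+1)R$. The largest radius appearing is $2^{k-1}R$, and $2^{k-1}<2c\le 2(c+1)$. This is where the admissible radius range must be handled with care. If one insists that $s\le (c+1)R$ exactly, it is enough to replace $B_{2^kR}(x_i)$ by $B_{2cR}(x_i)$ and double from $B_{2cR}$ down to $B_R$. In that chain each step halves a radius at most $cR\le (c+1)R$, and the number of steps is at most $\log_2(2c)+1$. The result is
\[
\mu(B_{cR}(x))\le C_\mu((c+1)R)^{\log_2(2c)+1}\,\mu(B_R(x_i)).
\]
Strictly, a non-integer number of halvings is needed, so we round up and use monotonicity of $\mu$ in the radius.

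Summing over $i$ and using disjointness together with $B_R(x_i)\subset B_{cR}(x)$ gives
\[
N\,\mu(B_{cR}(x))\,C_\mu((c+1)R)^{-(\log_2(2c)+1)}\le \sum_i\mu(B_R(x_i))\le \mu(B_{cR}(x)).
\]
The measure $\mu(B_{cR}(x))$ is positive and finite, because $\operatorname{supp}\mu=X$ and $\mu$ is locally finite. Dividing by it yields $N\le C_\mu((c+1)R)^{\log_2(2c)+1}$, which is smaller than the stated bound. The "$+1$" in the statement is harmless slack; it also covers the case where one interprets "contained" loosely, for example counting centers rather than whole balls. The same argument shows in passing that any disjoint family must be finite.

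The only delicate point is the bookkeeping in the doubling chain. We have to make sure every doubling step uses a radius within the range where the constant $C_\mu((c+1)R)$ is valid, and that the exponent counts the halvings correctly, including the rounding. Everything else is immediate from the definitions.
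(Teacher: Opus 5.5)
Your proof is correct and is the standard volume-comparison argument that the form of the constant in \cite[Proposition~2.12]{tyul} reflects (the paper itself only cites that result): $B_R(x_i)\subset B_{cR}(x)$ gives $B_{cR}(x)\subset B_{2cR}(x_i)$, and halving from $2cR$ keeps every doubling radius $\le cR\le (c+1)R$. This yields $\mu(B_{cR}(x))\le C_\mu((c+1)R)^{\log_2(2c)+1}\mu(B_R(x_i))$, and disjointness then gives $N\le C_\mu((c+1)R)^{\log_2(2c)+1}$. The one thing to drop is your aside that the $+1$ also covers the reading where only the centers $x_i$ lie in $B_{cR}(x)$: your argument does not establish this, and it is not needed, since in that case $B_R(x_i)\subset B_{(c+1)R}(x)$ and the lemma with $c+1$ in place of $c$ applies.
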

\par
By Lemma~\ref{Lm.loc_d_m}, every ball of finite radius is totally bounded. Since $X$ is complete, every closed ball \(\overline B_r(x)\), \(r<\infty\), is compact. Thus \(X\) is proper.
\par
Next, we recall the concept of the codimension-$\theta$ Hausdorff measure and Ahlfors--David codimension-$\theta$ regularity. Given $\theta\in[0,\infty)$, for each $E\subset X$ and each $\delta>0$, set
\begin{equation}
    \mathcal H_{\theta,\delta}(E)
    :=
    \inf
    \left\{
    \sum_i
    \frac{\mu(B_{r_i}(x_i))}{r_i^\theta}
    :
    E\subset\bigcup_i B_{r_i}(x_i),
    \quad
    0<r_i<\delta
    \right\},
\end{equation}
where the infimum is taken over all at most countable coverings of $E$ by balls $\{B_{r_i}(x_i)\}_i$. The codimension-$\theta$ Hausdorff measure of $E$ is defined by
\begin{equation}
    \mathcal H_\theta(E)
    :=
    \lim_{\delta\to0}
    \mathcal H_{\theta,\delta}(E).
\end{equation}
If \(E\subset X\) is Borel, we write \(\mathcal H_\theta\lfloor_E\) for the restriction of \(\mathcal H_\theta\) to \(E\), that is, \begin{equation} \label{eq.restricted_H_measure} (\mathcal H_\theta\lfloor_E)(G) := \mathcal H_\theta(G\cap E) \end{equation} for every Borel set \(G\subset X\). Thus \(\mathcal H_\theta\lfloor_E\) is a Borel measure on \(X\) with support contained in \(\bar{E}\).

\begin{Def}
Given $\theta\in[0,\infty)$, a closed set $E\subset X$ is called Ahlfors--David codimension-$\theta$ regular if there exist constants $c_1,c_2>0$ such that
\begin{equation}
\label{eq.coreg}
    c_1
    \frac{\mu(B_r(x))}{r^\theta}
    \le
    \mathcal H_\theta(B_r(x)\cap E)
    \le
    c_2
    \frac{\mu(B_r(x))}{r^\theta},
    \qquad x\in E,\quad r\in(0,1].
\end{equation}
\end{Def}
The following standard consequence of Ahlfors--David codimension-$\theta$ regularity will be useful in the sequel.
\begin{Lm}
\label{Lm.AD_zero_mu}
Let \(\theta>0\), and let \(E\subset X\) be Ahlfors--David codimension-\(\theta\) regular. Then \(\mu(E)=0\).
\end{Lm}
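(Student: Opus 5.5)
The plan is to show that $\mu$ vanishes on $E$ intersected with any ball. Since $X$ is separable, it suffices to show that $\mu(E\cap B_1(x_0))=0$ for a fixed $x_0\in E$. Put $F:=E\cap B_1(x_0)$.

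The idea is to cover $F$ efficiently at scale $r\in(0,1]$ and to use the lower bound in \eqref{eq.coreg} to control the total $\mu$-mass of the cover. For small $r$, I pick a maximal $r$-separated subset $\{x_i\}\subset F$. This set is finite, because $X$ is proper and Lemma~\ref{Lm.loc_d_m} applies. The balls $B_{r}(x_i)$ cover $F$, and the balls $B_{r/2}(x_i)$ are pairwise disjoint. Lemma~\ref{Lm.doubl_meas} together with local doubling (with $R=1$) gives $\mu(B_r(x_i))\le C_\mu(1)\,\mu(B_{r/2}(x_i))$. The lower regularity gives
\begin{equation*}
\mu(B_{r/2}(x_i))\le c_1^{-1}(r/2)^{\theta}\,\mathcal H_\theta(B_{r/2}(x_i)\cap E).
\end{equation*}
Summing over $i$ and using disjointness, I get
\begin{equation*}
\mu(F)\le\sum_i\mu(B_r(x_i))\le C r^{\theta}\,\mathcal H_\theta\bigl(E\cap B_2(x_0)\bigr).
\end{equation*}

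It then remains to check that $\mathcal H_\theta(E\cap B_2(x_0))<\infty$. The upper regularity bound in \eqref{eq.coreg} is only stated for radii $\le 1$. So I first cover $E\cap B_2(x_0)$ by finitely many balls $B_1(y_j)$ with $y_j\in E$; finitely many suffice by Lemma~\ref{Lm.loc_d_m} and a maximal separated net. Each ball contributes at most $c_2\mu(B_1(y_j))<\infty$. Hence $\mathcal H_\theta(E\cap B_2(x_0))<\infty$.

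Letting $r\to0$ and using $\theta>0$ yields $\mu(F)=0$. A countable union over centers from a countable dense subset of $E$ then gives $\mu(E)=0$. The only mildly delicate point is the finiteness of $\mathcal H_\theta$ on bounded pieces of $E$, which is handled by the finite covering above. I also need to check that $\mathcal H_\theta$ is countably additive on the disjoint Borel sets $B_{r/2}(x_i)\cap E$. This holds because $\mathcal H_\theta$ is a Borel (metric outer) measure by the Carathéodory construction. Alternatively, subadditivity in the reverse direction is not needed: it suffices that $\sum_i\mathcal H_\theta(B_{r/2}(x_i)\cap E)=\mathcal H_\theta(\bigcup_i B_{r/2}(x_i)\cap E)$, which holds by Borel additivity.
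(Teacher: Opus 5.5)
Your proof is correct, but it takes a different route from the paper.

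\textbf{The paper's route.} It uses only the upper half of \eqref{eq.coreg}. First it gets $\mathcal H_\theta(E\cap\overline B_R(x_0))<\infty$ from upper regularity and compactness, which is essentially your last step. Then it takes a nearly optimal cover from the definition of $\mathcal H_{\theta,\delta}$, by balls with radii $r_i<\delta$. For these balls one has directly $\mu(B_{r_i}(x_i))\le\delta^\theta\,\mu(B_{r_i}(x_i))/r_i^\theta$, hence $\mu(E\cap\overline B_R(x_0))\le\delta^\theta\bigl(\mathcal H_\theta(E\cap\overline B_R(x_0))+1\bigr)\to0$. This argument uses neither the lower bound nor any additivity of $\mathcal H_\theta$. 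It actually proves the general fact that $\mu(A)=0$ whenever $\theta>0$ and $\mathcal H_\theta$ is $\sigma$-finite on $A$.

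\textbf{Your route.} You pack $E$ at a single scale $r$ and use the \emph{lower} bound to trade the $\mu$-mass of each packing ball for $\mathcal H_\theta$-mass. So you need both halves of \eqref{eq.coreg}. You also need superadditivity of $\mathcal H_\theta$ on the disjoint sets $B_{r/2}(x_i)\cap E$; that is what your last remark should say, rather than mentioning subadditivity. It does hold, since $\mathcal H_\theta$ is a metric outer measure: balls of radius $<\delta$ cannot meet two sets at distance $>2\delta$.

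\textbf{What your route buys.} It gives a stronger, quantitative conclusion. The balls $B_{2r}(x_i)$ cover the $r$-neighbourhood of $F$, and $\mu(B_{2r}(x_i))\lesssim\mu(B_{r/2}(x_i))$. So the same computation yields $\mu(B_r(F))\lesssim r^\theta\,\mathcal H_\theta(E\cap B_2(x_0))$. This is a Minkowski-content type bound that no argument based only on finiteness of $\mathcal H_\theta$ on $E$ can give. It is the same mechanism the paper uses later in \eqref{eq.tr_rev_new1}.
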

\begin{proof}
Fix \(x_0\in X\) and \(R>0\). Since \(X\) is proper, the set
\(E\cap \overline B_R(x_0)\) is compact. By Ahlfors--David regularity and compactness,
\(\mathcal H_\theta(E\cap \overline B_R(x_0))<\infty\). Let \(\delta>0\). By the definition of \(\mathcal H_{\theta,\delta}\), we can cover
\(E\cap \overline B_R(x_0)\) by balls \(\{B_{r_i}(x_i)\}_i\), \(0<r_i<\delta\), so that
\begin{equation}
    \sum_i \frac{\mu(B_{r_i}(x_i))}{r_i^\theta}
    \le
    \mathcal H_\theta(E\cap \overline B_R(x_0))+1.
\end{equation}
Therefore
\begin{equation}
    \mu(E\cap \overline B_R(x_0))
    \le
    \sum_i \mu(B_{r_i}(x_i))
    \le
    \delta^\theta
    \left(\mathcal H_\theta(E\cap \overline B_R(x_0))+1\right).
\end{equation}
Letting \(\delta\to0\), we obtain \(\mu(E\cap \overline B_R(x_0))=0\).
Since \(R>0\) was arbitrary, it follows that \(\mu(E)=0\).
\end{proof}

\begin{Remark}
If $\mu$ is uniformly locally doubling and $E\subset X$ is Ahlfors--David codimension-$\theta$ regular, $\theta\in [0, \infty)$, then $(E,d|_E,\mathcal{H}_\theta\lfloor_E)$ is a metric measure space. Moreover, $\mathcal{H}_\theta\lfloor_E$, regarded as a measure on the metric space $E$, is locally doubling.
\end{Remark}
\par
It will be useful to separate the two inequalities in \eqref{eq.coreg}.

\begin{Def}
Given $\theta\in[0,\infty)$, a closed set $E\subset X$, and a locally finite Borel regular measure $\nu$ on $X$ with $\operatorname{supp}\nu \subset E$, we say that $\nu$ is upper codimension-$\theta$ regular if there exists $C>0$ such that
\begin{equation}
\label{eq.up_reg}
    \nu(B_r(x))
    \le
    C
    \frac{\mu(B_r(x))}{r^\theta},
    \qquad \text{for all }(x, r)\in E\times(0,1].
\end{equation}
Similarly, $\nu$ is lower codimension-$\theta$ regular if there exists $C>0$ such that
\begin{equation}
\label{eq.low_reg}
    \frac{\mu(B_r(x))}{r^\theta}
    \le
    C\nu(B_r(x)),
    \qquad \text{for all }(x, r)\in E\times(0,1].
\end{equation}
\end{Def}

\begin{Remark}
\label{Rm.regularity}
Let \(E\subset X\) be closed and let \(\nu\) be a locally finite Borel regular measure
on \(X\) with \(\operatorname{supp}\nu\subset E\).
If \(\nu\) is upper codimension-\(\theta\) regular on \(E\), then there exists \(C>0\) such that
\begin{equation}
\label{eq.upper_implies_abs_cont}
    \nu(F)
    \le
    C\mathcal H_\theta(F)
\end{equation}
for every Borel set \(F\subset X\). In particular, $\nu\ll \mathcal H_\theta\lfloor_E$.
If \(\nu\) is lower codimension-\(\theta\) regular on \(E\), then there exists \(C>0\) such that
\begin{equation}
    \mathcal H_\theta(F)
    \le
    C\nu(F)
\end{equation}
for every Borel set \(F\subset X\).
\par
Consequently, if \(\nu\) is both upper and lower codimension-\(\theta\) regular on \(E\), then \(E\) is Ahlfors--David codimension-\(\theta\) regular and \(\nu\) is comparable with \(\mathcal H_\theta\lfloor_E\).
\end{Remark}
We shall need the following simple strengthening of the Lebesgue differentiation theorem (see \cite[Lemma 3.10]{gibara} and the discussion after the lemma).
\begin{Th}
\label{Th.leb_points}
Let $(X, d, \mu)$ be a metric measure space with uniformly locally doubling measure $\mu$. Let $f\in L_1^{\operatorname{loc}}(X,\mu)$ and $\theta\in(0,\infty)$. Define
\begin{equation}
    \Lambda_\theta
    :=
    \left\{
    x\in X:
    \limsup_{r\to0}
    r^\theta
    \fint\limits_{B_r(x)}
    |f(y)|\,d\mu(y)>0
    \right\}.
\end{equation}
Then $\mathcal H_\theta(\Lambda_\theta)=0$.
\end{Th}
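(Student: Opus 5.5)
The statement to prove is Theorem~\ref{Th.leb_points}: for $f\in L_1^{\operatorname{loc}}(X,\mu)$ and $\theta>0$, the set $\Lambda_\theta$ has $\mathcal H_\theta(\Lambda_\theta)=0$. The plan is a Vitali-type covering argument combined with absolute continuity of the integral. By countable subadditivity we may localize to a ball $B_R(x_0)$. We then reduce to the sets
\[
\Lambda_{\theta,\varepsilon}:=\Bigl\{x\in B_R(x_0):\ \limsup_{r\to0} r^\theta\fint\limits_{B_r(x)}|f|\,d\mu>\varepsilon\Bigr\},
\]
since $\Lambda_\theta\cap B_R(x_0)=\bigcup_k\Lambda_{\theta,1/k}$. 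It then suffices to show $\mathcal H_\theta(\Lambda_{\theta,\varepsilon})=0$ for each fixed $\varepsilon>0$.

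\textbf{Step 1: $\mu(\Lambda_{\theta,\varepsilon})=0$.} Fix $x\in\Lambda_{\theta,\varepsilon}$. There are arbitrarily small $r$ with $\int_{B_r(x)}|f|\,d\mu>\varepsilon r^{-\theta}\mu(B_r(x))$, so $\fint_{B_r(x)}|f|\,d\mu\to\infty$ along that sequence. By the Lebesgue differentiation theorem, which holds for uniformly locally doubling measures, this can happen only on a $\mu$-null set. Consequently, for every $\eta>0$ there is an open set $U\supset\Lambda_{\theta,\varepsilon}$, contained in $B_{R+1}(x_0)$, with $\int_U|f|\,d\mu<\eta$. This uses outer regularity of $\mu$ together with absolute continuity of the integral of $|f|\in L_1(B_{R+1}(x_0))$.

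\textbf{Step 2: covering.} Fix $\delta\in(0,1)$. For each $x\in\Lambda_{\theta,\varepsilon}$, choose $r_x<\delta/5$ such that $B_{r_x}(x)\subset U$ and
\[
\mu(B_{r_x}(x))\,r_x^{-\theta}<\varepsilon^{-1}\int_{B_{r_x}(x)}|f|\,d\mu.
\]
The radii are bounded, so the $5r$-covering lemma applies. (Local doubling suffices here; alternatively use the basic $5r$-lemma in a metric space, with countability coming from properness and separability.) It yields a disjoint countable subfamily $\{B_{r_i}(x_i)\}$ with $\Lambda_{\theta,\varepsilon}\subset\bigcup_i B_{5r_i}(x_i)$. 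Using local doubling with $R=1$, we estimate
\[
\mathcal H_{\theta,\delta}(\Lambda_{\theta,\varepsilon})\le\sum_i\frac{\mu(B_{5r_i}(x_i))}{(5r_i)^\theta}\le C\sum_i\frac{\mu(B_{r_i}(x_i))}{r_i^\theta}\le \frac{C}{\varepsilon}\sum_i\int_{B_{r_i}(x_i)}|f|\,d\mu\le\frac{C\eta}{\varepsilon}.
\]
The last inequality uses that the balls are disjoint and contained in $U$.

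\textbf{Step 3: conclusion.} Let $\eta\to0$ and then $\delta\to0$. This gives $\mathcal H_\theta(\Lambda_{\theta,\varepsilon})=0$.

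The main obstacle is Step 1, that is, securing the open neighbourhood with small integral. One must be careful that $\Lambda_{\theta,\varepsilon}$ is $\mu$-null; I would obtain this from the differentiation theorem as above, and if that is unavailable, from the weak-type bound of the Hardy--Littlewood maximal operator under local doubling. Measurability issues are avoided because $\mathcal H_\theta$ is an outer measure. The remaining steps are routine.
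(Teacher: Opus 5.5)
Your proof is correct: Lebesgue differentiation gives $\mu(\Lambda_{\theta,\varepsilon})=0$, outer regularity and absolute continuity give an open $U\supset\Lambda_{\theta,\varepsilon}$ with $\int_U|f|\,d\mu<\eta$, and the $5r$-covering lemma with local doubling at scales below $1$ gives $\mathcal H_{\theta,\delta}(\Lambda_{\theta,\varepsilon})\lesssim\eta/\varepsilon$. The paper does not prove this statement itself but cites \cite[Lemma 3.10]{gibara}; your argument is the standard covering proof of such results, namely the metric-space analogue of the Evans--Gariepy argument for $\mathcal H^s$-Lebesgue points, and presumably the argument behind the cited lemma.
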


\subsection{Besov spaces}

Given a metric measure space $(X,d,\mu)$, for each $f\in L_1^{\operatorname{loc}}(X,\mu)$ and each $t>0$, define the mean oscillation of $f$ at scale $t$ by
\begin{equation}
    \overline\Delta_t f(x)
    :=
    \fint\limits_{B_t(x)}
    |f(y)-A_t^\mu f(x)|\,d\mu(y).
\end{equation}

\begin{Def}
Given $s\in(0,1)$ and $p,q\in[1,\infty]$, the Besov space $B^s_{p,q}(X,\mu)$ consists of all $f\in L_p(X,\mu)$ such that
\begin{equation}
\label{eq.bes_sem}
    \|f\|_{b^s_{p,q}(X,\mu)}
    :=
    \left(
    \int\limits_0^1
    \left(
    t^{-s}
    \|\overline\Delta_t f\|_{L_p(X,\mu)}
    \right)^q
    \frac{dt}{t}
    \right)^{1/q}
    <
    \infty,
\end{equation}
with the usual modification when $q=\infty$. We equip this space with the norm
\begin{equation}
    \|f\|_{B^s_{p,q}(X,\mu)}
    :=
    \|f\|_{L_p(X,\mu)}
    +
    \|f\|_{b^s_{p,q}(X,\mu)}, \qquad f\in B^s_{p, q}(X, \mu).
\end{equation}
\end{Def}

\begin{Remark}
If the measure $\mu$ is locally doubling, one may replace the upper limit $1$ in \eqref{eq.bes_sem} by any finite scale. More precisely, for each $R>0$, replacing the upper limit $1$ by $R$ gives an equivalent full norm on $B^s_{p,q}(X,\mu)$.
\end{Remark}

\begin{Remark}
The case $s\ge1$ will not be considered. In particular, if $X\in\mathfrak U_p^{\operatorname{gl}}$ and $p\in(1,\infty)$, then $B^1_{p,q}(X,\mu)$ is trivial for all $q\in[1,\infty)$ (see \cite[Theorem 4.1]{AKZ}).
\end{Remark}

\begin{Remark}
\label{Rm.equiv_smooth}
If the measure $\mu$ is uniformly locally doubling, then the Besov norm can be described by several equivalent oscillation quantities. We shall use the following two. Given $\sigma\in(0,p]$, set, for each $f\in L_p^{\operatorname{loc}}(X,\mu)$,
\begin{equation}
    \overline\Delta_t^\sigma f(x)
    :=
    \left(
    \fint\limits_{B_t(x)}
    |f(y)-A_t^\mu f(x)|^\sigma\,d\mu(y)
    \right)^{1/\sigma}
\end{equation}
and
\begin{equation}
     \Delta_t^\sigma f(x)
    :=
    \left(
    \fint\limits_{B_t(x)}
    |f(y)-f(x)|^\sigma\,d\mu(y)
    \right)^{1/\sigma}.
\end{equation}
In the case \(\sigma=1\), we shall simply write $\Delta_t f:=\Delta_t^1 f$.
Replacing $\overline\Delta_t f$ in \eqref{eq.bes_sem} by $\overline\Delta_t^\sigma f$ or $\Delta_t^\sigma f$, with $\sigma\in(0,p]$, produces an equivalent norm. We refer to \cite{AKZ} for a more general result.
\end{Remark}

\begin{Remark}
\label{Rm.equiv_b_sem}
If the measure \(\mu\) is uniformly locally doubling, then
\begin{equation}
    \left(
    \sum_{k=0}^{\infty}
    \left(
    2^{ks}
    \|\overline\Delta_{2^{-k}}f\|_{L_p(X,\mu)}
    \right)^q
    \right)^{1/q}
    \approx
    \|f\|_{b^s_{p,q}(X,\mu)},
\end{equation}
with the usual modification when \(q=\infty\). The same equivalence holds with any of the oscillation quantities from Remark~\ref{Rm.equiv_smooth} in place of \(\overline\Delta_{2^{-k}}f\) (see, for example, \cite[Lemma 2.5]{AKZ}).
\end{Remark}

The following estimates are used to control the Besov seminorm of Lipschitz functions.

\begin{Lm}
\label{Lm.lip_est}
Let $(X,d,\mu)$ be a metric measure space with uniformly locally doubling measure. Let $p\in[1,\infty)$ and $R>0$. Then there exists $C=C(R)>0$ such that, for each $f\in L_p(X,\mu)$ and each $t\in(0,R]$,
\begin{equation}
\label{eq.osc_gen}
    \|\overline\Delta_t f\|_{L_p(X,\mu)}
    \le
    C\|f\|_{L_p(X,\mu)}.
\end{equation}
If, in addition, $X\in\mathfrak U_p$, then there exists $C=C(R)>0$ such that, for each $f\in\operatorname{LIP}(X)$ with $\operatorname{lip}f\in L_p(X,\mu)$ and each $t\in(0,R]$,
\begin{equation}
\label{eq.osc_lip}
    \|\overline\Delta_t f\|_{L_p(X,\mu)}
    \le
    Ct\|\operatorname{lip}f\|_{L_p(X,\mu)}.
\end{equation}
\end{Lm}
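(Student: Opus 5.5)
For \eqref{eq.osc_gen} I would use the triangle inequality together with Lemma~\ref{Lm.doubl_meas} and Fubini. Pointwise,
\begin{equation}
    \overline\Delta_t f(x)
    \le
    \fint\limits_{B_t(x)}|f(y)|\,d\mu(y)+|A_t^\mu f(x)|
    \le
    2\fint\limits_{B_t(x)}|f(y)|\,d\mu(y).
\end{equation}
By H\"older's inequality, $\overline\Delta_t f(x)^p\le 2^p\fint_{B_t(x)}|f(y)|^p\,d\mu(y)$. Integrating in $x$ and exchanging the order of integration, the symmetry $y\in B_t(x)\iff x\in B_t(y)$ gives
\begin{equation}
    \int\limits_X \fint\limits_{B_t(x)}|f(y)|^p\,d\mu(y)\,d\mu(x)
    =
    \int\limits_X |f(y)|^p \int\limits_{B_t(y)}\frac{d\mu(x)}{\mu(B_t(x))}\,d\mu(y).
\end{equation}
Lemma~\ref{Lm.doubl_meas} with $R$ fixed gives $\mu(B_t(x))\ge c_1\mu(B_t(y))$ for $x\in B_t(y)$ and $t\le R$, so the inner integral is at most $1/c_1$. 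This proves \eqref{eq.osc_gen} with $C=2c_1^{-1/p}$.

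For \eqref{eq.osc_lip} I would apply the local weak $(1,p)$-Poincar\'e inequality for scales up to $R$, with constants $C(R)$ and $\lambda=\lambda(R)$:
\begin{equation}
    \overline\Delta_t f(x)
    \le
    Ct\left(\fint\limits_{B_{\lambda t}(x)}(\operatorname{lip}f(y))^p\,d\mu(y)\right)^{1/p}.
\end{equation}
Raising this to the power $p$, integrating over $X$ and using Fubini as above, the problem reduces to bounding
\begin{equation}
    \int\limits_{B_{\lambda t}(y)}\frac{d\mu(x)}{\mu(B_{\lambda t}(x))}
\end{equation}
uniformly in $y$. For $x\in B_{\lambda t}(y)$ and $\lambda t\le\lambda R$, Lemma~\ref{Lm.doubl_meas} applied with radius bound $\lambda R$ gives $\mu(B_{\lambda t}(x))\ge c_1(\lambda R)\mu(B_{\lambda t}(y))$. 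Hence the integral is bounded by a constant depending only on $R$, and \eqref{eq.osc_lip} follows.

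The only step needing care is the constant bookkeeping. The doubling constants must be taken at scale $\lambda R$ instead of $R$, since $\lambda$ depends on $R$. One should also confirm that $\operatorname{lip}f$ is Borel measurable so that Fubini applies. This holds because $\operatorname{lip}f$ is an upper limit over a countable dense set for continuous $f$, and $X$ is separable.
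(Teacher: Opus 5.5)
Your proposal is correct and follows the paper's proof essentially verbatim: you use the pointwise bound $\overline\Delta_t f\le 2A_t^\mu|f|$ with Jensen, Fubini and Lemma~\ref{Lm.doubl_meas} for \eqref{eq.osc_gen}, then the local Poincar\'e inequality and the same Fubini argument at radius $\lambda t$ for \eqref{eq.osc_lip}. Your remarks about taking the doubling constants at scale $\lambda R$ and about the Borel measurability of $\operatorname{lip}f$ make explicit two points that the paper's ``repeating the previous argument'' leaves implicit.
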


\begin{proof}
Since
\begin{equation}
    \overline\Delta_t f(x)
    \le
    2
    \fint\limits_{B_t(x)}
    |f(y)|\,d\mu(y),
\end{equation}
Jensen's inequality, Fubini's theorem, and Lemma~\ref{Lm.doubl_meas} give
\begin{equation}
    \int\limits_X
    \left(
    \fint\limits_{B_t(x)}
    |f(y)|\,d\mu(y)
    \right)^p
    d\mu(x)
    \le
    \int\limits_X
    |f(y)|^p
    \int\limits_{B_t(y)}
    \frac{d\mu(x)}{\mu(B_t(x))}
    d\mu(y)
    \lesssim
    \|f\|_{L_p(X,\mu)}^p.
\end{equation}
This proves \eqref{eq.osc_gen}.

Assume now that $X\in\mathfrak U_p$. Then, for each $t\in(0,R]$, the local weak $(1,p)$-Poincar\'e inequality gives
\begin{equation}
    \overline\Delta_t f(x)
    \lesssim
    t
    \left(
    \fint\limits_{B_{\lambda t}(x)}
    (\operatorname{lip}f(y))^p\,d\mu(y)
    \right)^{1/p}.
\end{equation}
Repeating the previous argument with $\operatorname{lip}f$ in place of $f$, we obtain
\begin{equation}
    \|\overline\Delta_t f\|_{L_p(X,\mu)}
    \lesssim
    t\|\operatorname{lip}f\|_{L_p(X,\mu)}.
\end{equation}
\end{proof}

\begin{Remark}
\label{Rm.lip_est}
Let $p,q\in[1,\infty)$ and $s\in(0,1)$. If $X\in\mathfrak U_p$, then Lemma~\ref{Lm.lip_est} implies that, for every $f\in\operatorname{LIP}(X)\cap L_p(X,\mu)$ with $\operatorname{lip}f\in L_p(X,\mu)$ and every $\delta\in(0,1)$,
\begin{equation}
\label{eq.bes_lip}
\begin{split}
    \|f\|_{b^s_{p,q}(X,\mu)}
    &\lesssim
    \left(
    \int\limits_0^\delta
    \|\operatorname{lip}f\|_{L_p(X,\mu)}^q
    \frac{dt}{t^{1+(s-1)q}}
    \right)^{1/q}
    +
    \left(
    \int\limits_\delta^1
    \|f\|_{L_p(X,\mu)}^q
    \frac{dt}{t^{1+sq}}
    \right)^{1/q} \\
    &\approx
    \delta^{1-s}
    \|\operatorname{lip}f\|_{L_p(X,\mu)}
    +
    \delta^{-s}
    \|f\|_{L_p(X,\mu)}.
\end{split}
\end{equation}
\end{Remark}
\par
\textbf{Important notation}. Whenever the metric measure space $X=(X,d,\mu)$ is fixed, we suppress the measure $\mu$ from the notation of function spaces. Thus, for every $p,q\in[1,\infty]$ and $s\in(0,1)$, we write
\begin{equation}
    L_p(X):=L_p(X,\mu),
    \qquad
    L_p^{\operatorname{loc}}(X):=L_p^{\operatorname{loc}}(X,\mu),
    \qquad
    B^s_{p,q}(X):=B^s_{p,q}(X,\mu).
\end{equation}
\section{The direct trace theorem}
Until the end of this section, we fix:
\begin{enumerate}
    \item a metric measure space $(X,d,\mu)$ with uniformly locally doubling measure $\mu$;
    \item an integrability parameter $p\in[1,\infty)$;
    \item a codimension parameter $\theta\in(0,p)$;
    \item a closed set $E\subset X$ and a locally finite Borel regular measure \(\nu\) on \(X\) with \(\operatorname{supp}\nu\subset E\).
\end{enumerate}
\par
First, we recall the notion of traces used in this paper.

\begin{Def}
\label{Def.tr}
Given $f\in L_1^{\operatorname{loc}}(X)$, a $\nu$-measurable function $\phi:E\to\mathbb R$ is called a trace of $f$ to $E$ if
\begin{equation}
\label{eq.tr_def}
    \lim_{r\to0}
    \fint\limits_{B_r(x)}
    |f(y)-\phi(x)|\,d\mu(y)
    =
    0,
    \qquad \text{for $\nu$-a.e. } x\in E.
\end{equation}
In this case, the equivalence class of $\phi$ modulo $\nu$-negligible sets is denoted by $\operatorname{Tr}f$.
\end{Def}

Given a normed function space $F(X)\subset L_1^{\operatorname{loc}}(X,\mu)$, we say that the \emph{trace operator} is well defined on $F(X)$ if, for each $f\in F(X)$, there exists a trace of $f$ to $E$ in the sense of Definition~\ref{Def.tr}. In this case, the mapping
\begin{equation}
    \operatorname{Tr}:F(X)\rightarrow L_0(E,\nu),
    \qquad
    f\mapsto \operatorname{Tr}f,
\end{equation}
is called the trace operator.

First, we show that the trace operator is well defined on \(B^{\theta/p}_{p,1}(X)\) under the upper codimension-\(\theta\) regularity assumption. Let us also mention that Lebesgue points and fine representatives of Besov
functions are often studied by capacitary methods (see, for example, \cite{Netrusov, Nuutinen}). However, the available capacitary estimates do not directly yield the Hausdorff-measure exceptional set estimate needed below in our metric-measure setting.

\begin{Th}
\label{Th.tr_ex}
Assume that $\nu$ is upper codimension-$\theta$ regular. Then the trace operator is well defined on $B^{\theta/p}_{p,1}(X)$. Moreover, for each $f\in B^{\theta/p}_{p,1}(X)$,
\begin{equation}
\label{eq.tr_ex_st}
    \lim_{r\to0}
    \fint\limits_{B_r(x)}
    |f(y)-\operatorname{Tr}f(x)|^p\,d\mu(y)
    =
    0,
    \qquad \text{for $\nu$-a.e. } x\in E.
\end{equation}
\end{Th}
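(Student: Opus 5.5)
Our plan is to show that the dyadic averages $A_{2^{-k}}^\mu f(x)$ form a Cauchy sequence for $\nu$-a.e. $x\in E$, and to define $\operatorname{Tr}f(x)$ as their limit.

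\emph{Step 1 (telescoping).} For $x\in X$ and $k\ge0$, put $r_k=2^{-k}$. Local doubling gives
\begin{equation*}
|A_{r_{k+1}}^\mu f(x)-A_{r_k}^\mu f(x)|\le \fint\limits_{B_{r_{k+1}}(x)}|f-A_{r_k}^\mu f(x)|\,d\mu\lesssim \overline\Delta_{r_k}f(x).
\end{equation*}
Hence convergence follows once $S f(x):=\sum_k \overline\Delta_{r_k}f(x)<\infty$ $\nu$-a.e.

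\emph{Step 2 (key estimate).} We control $\overline\Delta_{r_k}f(x)$ at points $x\in E$ by an $L_p(X)$ quantity. For $y\in B_{r_k}(x)$ we have $B_{r_k}(x)\subset B_{2r_k}(y)$ with comparable measures, so
\begin{equation*}
\overline\Delta_{r_k}f(x)\lesssim \fint\limits_{B_{r_k}(x)}\overline\Delta_{2r_k}f(y)\,d\mu(y)\le\Bigl(\fint\limits_{B_{r_k}(x)}(\overline\Delta_{2r_k}f)^p\,d\mu\Bigr)^{1/p},
\end{equation*}
using that $|A_{r_k}f(x)-A_{2r_k}f(y)|$ is bounded by the $2r_k$-oscillation at $y$. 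Integrating against $\nu$ and using Fubini together with upper regularity,
\begin{equation*}
\int_E \fint\limits_{B_{r}(x)}g\,d\mu\,d\nu(x)=\int_X g(y)\int_{B_r(y)\cap E}\frac{d\nu(x)}{\mu(B_r(x))}\,d\mu(y)\lesssim r^{-\theta}\int_X g\,d\mu,
\end{equation*}
since $\nu(B_r(y))\le\nu(B_{2r}(x'))\lesssim\mu(B_r(y))r^{-\theta}$ for any $x'\in E\cap B_r(y)$. This gives
\begin{equation*}
\|\overline\Delta_{r_k}f\|_{L_p(E,\nu)}\lesssim r_k^{-\theta/p}\|\overline\Delta_{2r_k}f\|_{L_p(X)}.
\end{equation*}
By Minkowski's inequality and Remark~\ref{Rm.equiv_b_sem}, $\|Sf\|_{L_p(E,\nu)}\lesssim\|f\|_{B^{\theta/p}_{p,1}}$, so $Sf<\infty$ $\nu$-a.e. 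The limit $\phi(x)=\lim_k A_{r_k}f(x)$ therefore exists $\nu$-a.e. Since $|A_{1}^\mu f|\le\fint_{B_1}|f|$ and the same Fubini estimate applies, $\|\phi\|_{L_p(\nu)}\lesssim\|f\|_{B}$. This also proves boundedness.

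\emph{Step 3 ($L_p$-Lebesgue property).} This is the main obstacle. For $r\in[r_{k+1},r_k]$ we estimate
\begin{equation*}
\Bigl(\fint\limits_{B_r(x)}|f-\phi(x)|^p\Bigr)^{1/p}\lesssim \overline\Delta^p_{r_k}f(x)+\sum_{j\ge k}\overline\Delta_{r_j}f(x).
\end{equation*}
The tail sum tends to $0$ wherever $Sf<\infty$. For the first term we use the $p$-version of the oscillation (Remark~\ref{Rm.equiv_smooth}) and repeat Step 2 with $\overline\Delta^p$ in place of $\overline\Delta$. This shows that $\sum_k\overline\Delta^p_{r_k}f\in L_p(\nu)$, so its terms tend to $0$ $\nu$-a.e. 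The passage from $\overline\Delta^p_{r_k}f(x)$ to an average of $\overline\Delta^p_{2r_k}f$ requires an $L_p$ Minkowski step, which we expect to be routine.

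Finally, $\nu$-measurability of $\phi$ holds because $\phi$ is a pointwise limit of the continuous-in-$x$ averages, defined off a $\nu$-null set. This completes the plan.
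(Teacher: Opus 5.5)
Your proof is correct, but it handles the $L_p$-Lebesgue property by a different route from the paper.

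\emph{Where you agree with the paper.} Your Steps~1--2 are essentially Proposition~\ref{Prop.tr_ex_step1}. You use the same telescoping of dyadic averages and dominate each increment by a $\mu$-average over $B_{2^{-k}}(x)$ of an oscillation at scale $2^{1-k}$. You then apply the same Fubini estimate $\|A^\mu_t h\|_{L_p(E,\nu)}\lesssim t^{-\theta/p}\|h\|_{L_p(X)}$ coming from upper regularity.

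\emph{Where you differ.} Lemma~\ref{Lm.tr_ex_step2} makes the same split into an oscillation term and $|A^\mu_tf(x)-\phi(x)|$. It then bounds the oscillation term pointwise by $t^\theta\fint_{B_t(x)}g^p\,d\mu$, with $g=\sum_m2^{m\theta/p}\Delta^p_{2^{-m}}f\in L_p(X)$. To conclude, it invokes the codimension-$\theta$ Lebesgue point theorem (Theorem~\ref{Th.leb_points}, imported from Gibara--Korte--Shanmugalingam) together with $\nu\ll\mathcal H_\theta\lfloor_E$ from Remark~\ref{Rm.regularity}. You instead run the Fubini estimate once more, with $\overline\Delta^p$, and get $\sum_k\overline\Delta^p_{2^{-k}}f\in L_p(E,\nu)$.

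\emph{What each route buys.} Yours is self-contained: it needs no external differentiation theorem and no comparison of $\nu$ with $\mathcal H_\theta$. It is also quantitative, giving $\bigl\|\sup_{0<r\le1/2}\bigl(\fint_{B_r(x)}|f-\operatorname{Tr}f(x)|^p\,d\mu\bigr)^{1/p}\bigr\|_{L_p(E,\nu)}\lesssim\|f\|_{B^{\theta/p}_{p,1}(X)}$. The paper's route shows instead that the oscillation term decays outside an $\mathcal H_\theta$-null set that does not depend on $\nu$; only the existence of the dyadic limit is tied to $\nu$.

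\emph{The step you flagged.} The passage you thought needed a Minkowski step is in fact pointwise. For every constant $c$ one has $\overline\Delta^p_rf(x)\le 2\bigl(\fint_{B_r(x)}|f-c|^p\,d\mu\bigr)^{1/p}$. Choosing $c=A^\mu_{2r}f(y)$ with $y\in B_r(x)$ gives $\overline\Delta^p_rf(x)\lesssim\overline\Delta^p_{2r}f(y)$.

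\emph{Two small fixes.}
First, upper regularity is assumed only for radii at most $1$, so your Fubini estimate needs $2r\le 1$. Start at $k=1$, as the paper does with $t\le 1/2$.
Second, $x\mapsto A^\mu_rf(x)$ need not be continuous, since spheres can carry positive $\mu$-mass. It is, however, Borel: by Fatou, $x\mapsto\int_{B_r(x)}f^{\pm}\,d\mu$ and $x\mapsto\mu(B_r(x))$ are lower semicontinuous. That is all the measurability of $\phi$ requires.
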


We divide the proof into two auxiliary statements.

\begin{Prop}
\label{Prop.tr_ex_step1}
Under the assumption of Theorem~\ref{Th.tr_ex}, for each $f\in B^{\theta/p}_{p,1}(X)$, the limit
\begin{equation}
    \lim_{k\to\infty}
    \fint\limits_{B_{2^{-k}}(x)}
    f(y)\,d\mu(y)
\end{equation}
exists and is finite for $\nu$-almost every $x\in E$.
\end{Prop}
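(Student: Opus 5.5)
The plan is to show that the sequence of averages $a_k(x):=A^\mu_{2^{-k}}f(x)$ is Cauchy for $\nu$-a.e. $x\in E$. We do this by proving that $\sum_k |a_{k+1}(x)-a_k(x)|<\infty$ $\nu$-a.e., and it suffices to bound this sum in $L_p(E,\nu)$ (or just in $L_1$ locally). First, by local doubling,
\begin{equation}
    |a_{k+1}(x)-a_k(x)|
    \le
    \fint\limits_{B_{2^{-k-1}}(x)}|f(y)-a_k(x)|\,d\mu(y)
    \lesssim
    \overline\Delta_{2^{-k}}f(x).
\end{equation}
So the task reduces to estimating $\|\overline\Delta_{2^{-k}}f\|_{L_p(E,\nu)}$ by $2^{k\theta/p}$ times an $L_p(X,\mu)$-norm of an oscillation at a comparable scale. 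Summing in $k$ and using Remark~\ref{Rm.equiv_b_sem} with $s=\theta/p$, $q=1$ then gives
\begin{equation}
    \Big\|\sum_k|a_{k+1}-a_k|\Big\|_{L_p(E,\nu)}
    \lesssim
    \|f\|_{B^{\theta/p}_{p,1}(X)}<\infty.
\end{equation}
Minkowski's inequality applies because $p\ge1$.

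The key step is this transfer of a pointwise oscillation from $E$ (with measure $\nu$) to $X$ (with measure $\mu$). For $x\in E$ and $r=2^{-k}$, we compare $\overline\Delta_r f(x)$ with a quantity that is averaged over $z\in B_r(x)$, for instance
\begin{equation}
    \overline\Delta_r f(x)
    \le 2\fint\limits_{B_r(x)}|f(y)-c|\,d\mu(y)
\end{equation}
for any constant $c$. Choosing $c=A^\mu_{2r}f(z)$ and averaging over $z\in B_r(x)$ gives
\begin{equation}
    \overline\Delta_r f(x)\lesssim \fint\limits_{B_r(x)}\overline\Delta_{2r}f(z)\,d\mu(z),
\end{equation}
since $B_r(x)\subset B_{2r}(z)$ and the measures are comparable by Lemma~\ref{Lm.doubl_meas}. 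Jensen's inequality raises this to the $p$-th power, and we integrate in $\nu$ over $x\in E$. Fubini then yields
\begin{equation}
    \int\limits_E(\overline\Delta_r f)^p\,d\nu
    \lesssim
    \int\limits_X(\overline\Delta_{2r}f(z))^p\,\frac{\nu(B_r(z))}{\mu(B_r(z))}\,d\mu(z).
\end{equation}
If $\nu(B_r(z))\neq0$, there is a point $x'\in E\cap B_r(z)$. Then $\nu(B_r(z))\le\nu(B_{2r}(x'))\lesssim \mu(B_{2r}(x'))(2r)^{-\theta}\lesssim \mu(B_r(z))r^{-\theta}$ by the upper regularity \eqref{eq.up_reg} and doubling; this uses $2r\le1$, which we arrange by starting from $k\ge1$. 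We conclude that $\|\overline\Delta_{2^{-k}}f\|_{L_p(E,\nu)}\lesssim 2^{k\theta/p}\|\overline\Delta_{2^{-k+1}}f\|_{L_p(X)}$, which is exactly the required bound.

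Finally, the sum $\sum_k|a_{k+1}(x)-a_k(x)|$ belongs to $L_p(E,\nu)$, so it is finite $\nu$-a.e. Each $a_k(x)$ is finite for every $x$ since $f\in L_1^{\operatorname{loc}}$. Hence $\lim_k a_k(x)$ exists and is finite $\nu$-a.e. As a byproduct, combining this with $|a_1|\lesssim$ a local $L_p$ average controlled in the same Fubini way should give the trace bound in $L_p(E,\nu)$; that bound is needed later but not for this proposition. I expect the main obstacle to be the key Fubini transfer step, specifically the measurability and the handling of the $\nu$-weight near $E$. Uniform local doubling only gives constants depending on $R$, so I will restrict to $r\le1$ throughout.
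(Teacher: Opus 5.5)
Your proposal is correct and follows essentially the same route as the paper. Both arguments use the telescoping sum of dyadic averages, bound each increment pointwise by a $\mu$-average over $B_{2^{-k}}(x)$ of an oscillation at scale $2^{-k+1}$, then apply Jensen, Fubini and upper codimension-$\theta$ regularity to get $\|A^\mu_t h\|_{L_p(E,\nu)}\lesssim t^{-\theta/p}\|h\|_{L_p(X)}$, and finally sum against the dyadic $b^{\theta/p}_{p,1}$ seminorm. The only difference is cosmetic: you pass through $\overline\Delta_{2^{-k}}f(x)$ and average $\overline\Delta_{2^{-k+1}}f$, whereas the paper bounds the increment directly by an average of the uncentered $\Delta_{2^{-k+1}}f$ and so also invokes Remark~\ref{Rm.equiv_smooth}.
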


\begin{proof}
Fix $f\in B^{\theta/p}_{p,1}(X)$ and define
\begin{equation}
\label{eq.tr_d_st2}
    \Phi_f(x)
    :=
    \sum_{k=1}^{\infty}
    \left|
    A^\mu_{2^{-k-1}}f(x)
    -
    A^\mu_{2^{-k}}f(x)
    \right|,
    \qquad x\in E.
\end{equation}
We shall prove that $\Phi_f\in L_p(E,\nu)$. This will imply that the sequence $\left\{A^\mu_{2^{-k}}f(x)\right\}_{k=1}^{\infty}$
is Cauchy, and hence convergent, for $\nu$-almost every $x\in E$.
\par
For $x\in E$ and $k\in\mathbb N$, Jensen's inequality and Lemma~\ref{Lm.doubl_meas} give
\begin{equation}
\label{eq.tr_ex2}
\begin{split}
    \left|
    A^\mu_{2^{-k-1}}f(x)
    -
    A^\mu_{2^{-k}}f(x)
    \right|
    \le
    \fint\limits_{B_{2^{-k}}(x)}
    \fint\limits_{B_{2^{-k-1}}(x)}
    |f(z)-f(y)|\,d\mu(z)\,d\mu(y) \\
    \lesssim
    \fint\limits_{B_{2^{-k}}(x)}
    \fint\limits_{B_{2^{-k+1}}(y)}
    |f(z)-f(y)|\,d\mu(z)\,d\mu(y) =
    \fint\limits_{B_{2^{-k}}(x)}
    \Delta_{2^{-k+1}}f(y)\,d\mu(y).
\end{split}
\end{equation}
Consequently,
\begin{equation}
\label{eq.tr_ex3}
    \Phi_f(x)
    \lesssim
    \sum_{k=1}^{\infty}
    \fint\limits_{B_{2^{-k}}(x)}
    \Delta_{2^{-k+1}}f(y)\,d\mu(y),
    \qquad x\in E.
\end{equation}
\par
Let \(t\in(0,1/2]\) and \(h\in L_p(X)\). By Jensen's inequality and Fubini's theorem,
\begin{equation}
\label{eq.tr_ex4}
\begin{split}
    \int\limits_E
    \left|
    \fint\limits_{B_t(x)}
    h(y)\,d\mu(y)
    \right|^p
    d\nu(x)
    &\le
    \int\limits_E
    \fint\limits_{B_t(x)}
    |h(y)|^p\,d\mu(y)\,d\nu(x) \\
    &\le
    \int\limits_{B_t(E)}
    |h(y)|^p
    \int\limits_{E\cap B_t(y)}
    \frac{d\nu(x)}{\mu(B_t(x))}
    d\mu(y).
\end{split}
\end{equation}
If \(E\cap B_t(y)=\emptyset\), then the inner integral is zero. Otherwise, choose
\(\xi\in E\cap B_t(y)\). For every \(x\in E\cap B_t(y)\), Lemma~\ref{Lm.doubl_meas} gives $\mu(B_t(x))\approx \mu(B_t(\xi))$.
Moreover,
\begin{equation}
    E\cap B_t(y)\subset E\cap B_{2t}(\xi).
\end{equation}
Since \(2t\le1\), the upper codimension-\(\theta\) regularity of \(\nu\), together with the local doubling property of \(\mu\), gives
\begin{equation}
\label{eq.inner_av_est}
    \int\limits_{E\cap B_t(y)}
    \frac{d\nu(x)}{\mu(B_t(x))}
    \lesssim
    \frac{\nu(E\cap B_{2t}(\xi))}{\mu(B_t(\xi))}
    \lesssim
    t^{-\theta}.
\end{equation}
Combining \eqref{eq.tr_ex4} and \eqref{eq.inner_av_est}, we get
\begin{equation}
\label{eq.av_trace_est}
    \left\|
    A_t^\mu h
    \right\|_{L_p(E,\nu)}
    \lesssim
    t^{-\frac{\theta}{p}}
    \|h\|_{L_p(X)}.
\end{equation}
Applying this estimate with \(t=2^{-k}\) and \(h=\Delta_{2^{-k+1}}f\), and using \eqref{eq.tr_ex3}, we obtain
\begin{equation}
    \|\Phi_f\|_{L_p(E,\nu)}
    \lesssim
    \sum_{k=1}^{\infty}
    2^{k\theta/p}
    \|\Delta_{2^{-k+1}}f\|_{L_p(X)}
    \lesssim
    \|f\|_{b^{\theta/p}_{p,1}(X)}.
\end{equation}
Thus \(\Phi_f\in L_p(E,\nu)\). In particular, \(\Phi_f(x)<\infty\) for \(\nu\)-almost every \(x\in E\), and the desired dyadic limit exists for \(\nu\)-almost every \(x\in E\).
\end{proof}

Next, we show that the dyadic limit obtained above is indeed the trace of \(f\).

\begin{Lm}
\label{Lm.tr_ex_step2}
Assume that $\nu$ is upper codimension-$\theta$ regular. Let \(f\in B^{\theta/p}_{p,1}(X)\). Then the function
\begin{equation}
\label{eq.trace_dyadic_def}
    \phi(x)
    :=
    \lim_{k\to\infty}
    \fint\limits_{B_{2^{-k}}(x)}
    f(y)\,d\mu(y),
    \qquad x\in E,
\end{equation}
defined for \(\nu\)-almost every \(x\in E\), is a trace of \(f\) to \(E\). Moreover,
\begin{equation}
\label{eq.tr_ex_step2_st}
    \lim_{r\to0}
    \fint\limits_{B_r(x)}
    |f(y)-\phi(x)|^p\,d\mu(y)
    =
    0,
    \qquad \text{for $\nu$-a.e. } x\in E.
\end{equation}
\end{Lm}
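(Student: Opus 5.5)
Reduce to dyadic radii. For $r\in[2^{-k-1},2^{-k}]$ we have $B_r(x)\subset B_{2^{-k}}(x)$ with $\mu(B_r(x))\approx\mu(B_{2^{-k}}(x))$ by local doubling, so
\[
\fint_{B_r(x)}|f-\phi(x)|^p\,d\mu\lesssim\fint_{B_{2^{-k}}(x)}|f-\phi(x)|^p\,d\mu .
\]
It therefore suffices to show that $\fint_{B_{2^{-k}}(x)}|f-\phi(x)|^p\,d\mu\to0$ for $\nu$-a.e. $x\in E$. The $L_1$ trace property \eqref{eq.tr_def} then follows from H\"older's inequality.

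Split the dyadic average as
\[
\Big(\fint_{B_{2^{-k}}(x)}|f-\phi(x)|^p\Big)^{1/p}\le \Big(\fint_{B_{2^{-k}}(x)}|f-A^\mu_{2^{-k}}f(x)|^p\Big)^{1/p}+|A^\mu_{2^{-k}}f(x)-\phi(x)|.
\]
The second term tends to $0$ $\nu$-a.e. by Proposition~\ref{Prop.tr_ex_step1}. The first term is $\overline\Delta^p_{2^{-k}}f(x)$, evaluated at points of $E$. Because the oscillation quantity is only controlled in $L_p(X,\mu)$, we transfer to $\nu$ by averaging. For $y\in B_{2^{-k}}(x)$ we have $B_{2^{-k}}(x)\subset B_{2^{-k+1}}(y)$, so
\[
\fint_{B_{2^{-k}}(x)}|f-A^\mu_{2^{-k}}f(x)|^p\le 2^{p}\fint_{B_{2^{-k}}(x)}|f(z)-f(y)|^p\,d\mu(z)
\]
for each such $y$. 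Averaging over $y\in B_{2^{-k}}(x)$ and using doubling gives
\[
\overline\Delta^p_{2^{-k}}f(x)^p\lesssim \fint_{B_{2^{-k}}(x)}\big(\Delta^p_{2^{-k+1}}f(y)\big)^p\,d\mu(y)=A^\mu_{2^{-k}}\big[(\Delta^p_{2^{-k+1}}f)^p\big](x).
\]

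Now define
\[
\Psi(x):=\sum_{k\ge1}\Big(A^\mu_{2^{-k}}\big[(\Delta^p_{2^{-k+1}}f)^p\big](x)\Big)^{1/p}.
\]
For each $k$, apply the averaging step inside the proof of Proposition~\ref{Prop.tr_ex_step1}, i.e. the Fubini computation \eqref{eq.tr_ex4}--\eqref{eq.inner_av_est}, with $h=(\Delta^p_{2^{-k+1}}f)^p\in L_1(X)$ and exponent $1$:
\[
\int_E A^\mu_{2^{-k}}h\,d\nu\lesssim 2^{k\theta}\|h\|_{L_1(X)}.
\]
This yields
\[
\big\|(A^\mu_{2^{-k}}h)^{1/p}\big\|_{L_p(E,\nu)}\lesssim 2^{k\theta/p}\|\Delta^p_{2^{-k+1}}f\|_{L_p(X)}.
\]
Summing over $k$ with Minkowski's inequality in $L_p(E,\nu)$ and invoking Remark~\ref{Rm.equiv_b_sem} with the quantity $\Delta^p_t$ from Remark~\ref{Rm.equiv_smooth} (here $\sigma=p$ is allowed), we get
\[
\|\Psi\|_{L_p(E,\nu)}\lesssim\|f\|_{b^{\theta/p}_{p,1}(X)}<\infty .
\]
Hence $\Psi<\infty$ $\nu$-a.e. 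The terms of a convergent series tend to zero, so $\overline\Delta^p_{2^{-k}}f(x)\to0$ for $\nu$-a.e. $x$, which completes the argument.

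The main obstacle is this middle step: controlling the pointwise oscillation at points of the $\mu$-null set $E$ through $L_p(X)$ bounds. It is resolved by the averaging trick together with the $\nu$-versus-$\mu$ Fubini estimate, which is exactly where upper codimension-$\theta$ regularity enters. We also need $\phi$ to be $\nu$-measurable. This holds because each $x\mapsto A^\mu_{2^{-k}}f(x)$ is Borel (it is lower/upper semicontinuous up to standard arguments, given that $\mu(B_r(x))$ is lower semicontinuous in $x$), so $\phi$ is a pointwise a.e. limit of measurable functions.
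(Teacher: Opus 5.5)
Your argument is correct, and it differs from the paper's at the one step that matters.

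\textbf{Shared part.} Both proofs reduce to dyadic radii and split off $|A^\mu_{2^{-k}}f(x)-\phi(x)|$. Both then bound the oscillation term by $A^\mu_{2^{-k}}\bigl[(\Delta^p_{2^{-k+1}}f)^p\bigr](x)$.

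\textbf{Paper's route.} The paper packs all scales into $g=\sum_m 2^{m\theta/p}\Delta^p_{2^{-m}}f\in L_p(X)$. It bounds the oscillation term by $t^\theta\fint_{B_t(x)}g^p\,d\mu$ and invokes the Hausdorff-measure differentiation theorem (Theorem~\ref{Th.leb_points}), which gives vanishing off an $\mathcal H_\theta$-null set. It passes to $\nu$ via $\nu\ll\mathcal H_\theta\lfloor_E$ (Remark~\ref{Rm.regularity}).

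\textbf{Your route.} You reuse the Fubini estimate \eqref{eq.tr_ex4}--\eqref{eq.inner_av_est} with exponent $1$ to show that the scale-by-scale series $\Psi$ lies in $L_p(E,\nu)$. Its terms therefore tend to zero $\nu$-a.e.

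\textbf{What each buys.} Your route is self-contained: it needs no external differentiation lemma and no covering argument behind $\nu\ll\mathcal H_\theta$. It is also quantitative: it gives $\|\sup_{k\ge1}\overline\Delta^p_{2^{-k}}f\|_{L_p(E,\nu)}\lesssim\|f\|_{B^{\theta/p}_{p,1}(X)}$, in parallel with the treatment of $\Phi_f$ in Proposition~\ref{Prop.tr_ex_step1}. The paper's route gives a stronger qualitative statement: the oscillation term vanishes outside an $\mathcal H_\theta$-null set that depends only on $f$, independently of $\nu$. The paper reuses this same device in Lemma~\ref{Lm.tr_equiv}.

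Your remark on Borel measurability of $x\mapsto A^\mu_{2^{-k}}f(x)$ also fills a detail the paper leaves implicit. It follows from lower semicontinuity of $x\mapsto\int_{B_r(x)}f^\pm\,d\mu$ and of $x\mapsto\mu(B_r(x))$.
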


\begin{proof}
We define \(\phi\) arbitrarily on the \(\nu\)-negligible set where the limit in \eqref{eq.trace_dyadic_def} does not exist. Then \(\phi\) is \(\nu\)-measurable as a pointwise almost everywhere limit of measurable functions.
\par
Let \(r\in(0,\frac12]\), and choose \(k=k(r)\in\mathbb N\) such that $2^{-k-1}<r\le 2^{-k}$.
Put \(t=2^{-k}\). Then \(r\le t<2r\). By the triangle inequality and the local doubling property of \(\mu\),
\begin{equation}
\label{eq.tr_d_ex5}
\begin{split}
    \fint\limits_{B_r(x)}
    |f(y)-\phi(x)|^p\,d\mu(y)
    &\lesssim
    \fint\limits_{B_t(x)}
    |f(y)-A_t^\mu f(x)|^p\,d\mu(y) \\
    &\quad+
    |A_t^\mu f(x)-\phi(x)|^p .
\end{split}
\end{equation}
The second term tends to \(0\) as \(r\to0\), equivalently \(t\to0\), by the definition of \(\phi\).
\par
It remains to estimate the first term. For each \(y\in B_t(x)\), we have \(B_t(x)\subset B_{2t}(y)\). Hence Jensen's inequality and Lemma~\ref{Lm.doubl_meas} imply
\begin{equation}
\label{eq.tr_d_ex6}
\begin{split}
    \fint\limits_{B_t(x)}
    |f(y)-A_t^\mu f(x)|^p\,d\mu(y)
    &\le
    \fint\limits_{B_t(x)}
    \fint\limits_{B_t(x)}
    |f(y)-f(z)|^p\,d\mu(z)\,d\mu(y) \\
    &\lesssim
    \fint\limits_{B_t(x)}
    \left(\Delta_{2t}^{p}f(y)\right)^p\,d\mu(y).
\end{split}
\end{equation}
Here \(\Delta^p_{2t}f\) denotes the oscillation quantity from Remark~\ref{Rm.equiv_smooth} with \(\sigma=p\).
\par
Set
\begin{equation}
    g(y)
    :=
    \sum_{m=0}^{\infty}
    2^{m\frac{\theta}{p}}
    \Delta_{2^{-m}}^{p}f(y),
    \qquad y\in X.
\end{equation}
By Remarks~\ref{Rm.equiv_smooth} and~\ref{Rm.equiv_b_sem}, we have \(g\in L_p(X)\). Since \(2t=2^{-k+1}\), it follows from the definition of \(g\) that
\begin{equation}
    \Delta_{2t}^{p}f(y)
    \lesssim t^{\frac{\theta}{p}}g(y).
\end{equation}
Consequently,
\begin{equation}
\label{eq.tr_d_eq7}
    \fint\limits_{B_t(x)}
    \left(\Delta_{2t}^{p}f(y)\right)^p\,d\mu(y)
    \lesssim
    t^\theta
    \fint\limits_{B_t(x)}
    g(y)^p\,d\mu(y).
\end{equation}
Since \(g^p\in L_1(X)\), Theorem~\ref{Th.leb_points} applied to \(g^p\) gives
\begin{equation}
    \lim_{t\to0}
    t^\theta
    \fint\limits_{B_t(x)}
    g(y)^p\,d\mu(y)
    =
    0
\end{equation}
for \(\mathcal H_\theta\)-almost every \(x\in X\).
\par
Finally, the upper codimension-\(\theta\) regularity of \(\nu\) implies that $\nu\ll \mathcal H_\theta\lfloor_E$ (see Remark~\ref{Rm.regularity}). Therefore the same convergence holds for \(\nu\)-almost every \(x\in E\). Combining this with \eqref{eq.tr_d_ex5}, \eqref{eq.tr_d_ex6}, and \eqref{eq.tr_d_eq7}, we obtain \eqref{eq.tr_ex_step2_st}.
\end{proof}

\begin{proof}[Proof of Theorem~\ref{Th.tr_ex}]
The theorem follows directly from Proposition~\ref{Prop.tr_ex_step1} and Lemma~\ref{Lm.tr_ex_step2}.
\end{proof}

Now we prove the direct part of the trace theorem.

\begin{Th}
\label{Th.dir}
Assume that $\nu$ is upper codimension-$\theta$ regular. Then, for each $f\in B^{\theta/p}_{p,1}(X)$, one has $\operatorname{Tr}f\in L_p(E,\nu)$. Moreover, the trace operator
\begin{equation}
\label{eq.tr_dir_def}
    \operatorname{Tr}:B^{\theta/p}_{p,1}(X)\rightarrow L_p(E,\nu)
\end{equation}
is bounded.
\par
If, in addition, \(X\in\mathfrak U_p\), then the following conditions are equivalent:
\begin{enumerate}
    \item the trace operator \(\operatorname{Tr}:B^{\theta/p}_{p,1}(X)\rightarrow L_p(E,\nu)\) is bounded;
    \item \(\nu\) is upper codimension-\(\theta\) regular.
\end{enumerate}
\end{Th}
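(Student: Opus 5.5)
The first part, boundedness of \(\operatorname{Tr}\), comes directly out of the proof of Proposition~\ref{Prop.tr_ex_step1}. For \(\nu\)-a.e. \(x\in E\) we have, by Lemma~\ref{Lm.tr_ex_step2},
\[
\operatorname{Tr}f(x)=\lim_k A^\mu_{2^{-k}}f(x),
\qquad\text{hence}\qquad
|\operatorname{Tr}f(x)|\le |A^\mu_{1/2}f(x)|+\Phi_f(x).
\]
The proof of Proposition~\ref{Prop.tr_ex_step1} already gives \(\|\Phi_f\|_{L_p(E,\nu)}\lesssim\|f\|_{b^{\theta/p}_{p,1}(X)}\). The estimate \eqref{eq.av_trace_est} with \(t=1/2\) and \(h=f\) gives \(\|A^\mu_{1/2}f\|_{L_p(E,\nu)}\lesssim\|f\|_{L_p(X)}\). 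Adding the two bounds yields
\[
\|\operatorname{Tr}f\|_{L_p(E,\nu)}\lesssim\|f\|_{B^{\theta/p}_{p,1}(X)}.
\]
This also settles the implication (2)\(\Rightarrow\)(1) of the equivalence.

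For (1)\(\Rightarrow\)(2) we assume \(X\in\mathfrak U_p\) and test boundedness on bump functions. Fix \(x\in E\) and \(r\in(0,1]\), and set
\[
f=\max\{0,\,1-\operatorname{dist}(\cdot,B_r(x))/r\}.
\]
This function is Lipschitz with \(L(f)\le 1/r\). It satisfies \(f=1\) on \(B_r(x)\), it is supported in \(B_{2r}(x)\), and \(\operatorname{lip}f\le 1/r\) on \(B_{2r}(x)\) and \(0\) elsewhere. Since \(f\) is continuous, its trace equals \(f\) pointwise on \(E\), so
\[
\|\operatorname{Tr}f\|^p_{L_p(E,\nu)}\ge\nu(B_r(x)).
\]
For the upper bound on the norm we use Remark~\ref{Rm.lip_est} with \(q=1\), \(s=\theta/p\) and \(\delta=r\) (valid for \(r<1\); \(r=1\) is handled by doubling). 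Together with local doubling this gives
\[
\|f\|_{B^{\theta/p}_{p,1}(X)}\lesssim r^{1-\theta/p}\,r^{-1}\mu(B_{2r}(x))^{1/p}+r^{-\theta/p}\mu(B_{2r}(x))^{1/p}+\mu(B_{2r}(x))^{1/p}\lesssim r^{-\theta/p}\mu(B_r(x))^{1/p}.
\]
Boundedness of \(\operatorname{Tr}\) then gives \(\nu(B_r(x))\le C r^{-\theta}\mu(B_r(x))\), which is exactly \eqref{eq.up_reg}.

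The main obstacle is in the necessity direction. There the trace operator is only assumed to be well defined and bounded, so we must justify that the trace of a continuous \(f\) coincides \(\nu\)-a.e. with \(f|_E\). This holds because continuity gives
\[
\fint_{B_\rho(y)}|f-f(y)|\,d\mu\to0\qquad\text{for every }y,
\]
so \(f|_E\) satisfies Definition~\ref{Def.tr}. Traces are unique \(\nu\)-a.e., since two traces \(\phi_1,\phi_2\) satisfy \(|\phi_1(y)-\phi_2(y)|\le\) the sum of the two averages, which tends to \(0\). The remaining care is in tracking that all constants depend only on \(R=2\) via local doubling and the local Poincaré constants.
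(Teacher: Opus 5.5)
Your proposal is correct and follows the paper's proof essentially step by step. For sufficiency you use the bound $|\operatorname{Tr}f|\le |A^\mu_{1/2}f|+\Phi_f$ together with \eqref{eq.av_trace_est} at $t=1/2$, and for necessity you test on the cut-off $\max\{0,1-\operatorname{dist}(\cdot,B_r(x))/r\}$ via Remark~\ref{Rm.lip_est} with $\delta=r$, exactly as the paper does. Your explicit check that traces are unique $\nu$-a.e., so that the trace of a Lipschitz function is its restriction to $E$, is a small point the paper leaves implicit.
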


\begin{proof}
A similar result for classical Besov spaces in the Euclidean setting was proved in \cite{Gul}.
\par
\emph{Step 1.} Assume first that \(\nu\) is upper codimension-\(\theta\) regular. By Theorem~\ref{Th.tr_ex}, the trace operator is well defined on \(B^{\theta/p}_{p,1}(X)\). Moreover, for every \(f\in B^{\theta/p}_{p,1}(X)\), Proposition~\ref{Prop.tr_ex_step1} gives, for \(\nu\)-almost every \(x\in E\),
\begin{equation}
\label{eq.trace_telescopic}
    \operatorname{Tr}f(x)
    =
    A^\mu_{2^{-1}}f(x)
    +
    \sum_{k=1}^{\infty}
    \left(
    A^\mu_{2^{-k-1}}f(x)
    -
    A^\mu_{2^{-k}}f(x)
    \right).
\end{equation}
Using \eqref{eq.av_trace_est} with \(t=2^{-1}\), and arguing as in the proof of Proposition~\ref{Prop.tr_ex_step1}, we obtain
\begin{equation}
\begin{split}
    \|\operatorname{Tr}f\|_{L_p(E,\nu)}
    \lesssim
    \|f\|_{L_p(X)}
    +
    \sum_{k=1}^{\infty}
    2^{k\frac{\theta}{p}}
    \|\Delta_{2^{-k+1}}f\|_{L_p(X)} \lesssim
    \|f\|_{B^{\theta/p}_{p,1}(X)}.
\end{split}
\end{equation}
Thus the trace operator is bounded.
\par
\emph{Step 2.} Assume now that \(X\in\mathfrak U_p\). By Step~1, the upper codimension-\(\theta\) regularity of \(\nu\) implies the boundedness of the trace operator. Conversely, suppose that the trace operator
\begin{equation}
    \operatorname{Tr}:B^{\theta/p}_{p,1}(X)\rightarrow L_p(E,\nu)
\end{equation}
is bounded. We prove the upper codimension-\(\theta\) regularity of \(\nu\). It is enough to prove the estimate for $r\in(0,1)$, the case $r=1$ follows by enlarging
the implicit constant.
\par
Take \(x_0\in E\) and \(r\in(0,1)\). Choose a Lipschitz cut-off function \(\psi\in\operatorname{LIP}(X)\) such that
\begin{equation}
\label{eq.lip_prop1}
    \chi_{B_r(x_0)}(x)
    \le
    \psi(x)
    \le
    \chi_{B_{2r}(x_0)}(x),
    \qquad x\in X,
\end{equation}
and
\begin{equation}
\label{eq.lip_prop2}
    L(\psi)\le \frac{C}{r},
\end{equation}
where \(C>0\) is independent of \(x_0\) and \(r\). For example, one may take
\begin{equation}
    \psi(x)
    =
    \max\left\{
    0,
    1-\frac{\operatorname{dist}(x,B_r(x_0))}{r}
    \right\}.
\end{equation}
Since \(\psi\) is Lipschitz, its trace is the pointwise restriction of \(\psi\) to \(E\). Indeed, for every \(x\in E\),
\begin{equation}
\fint\limits_{B_\rho(x)}
|\psi(y)-\psi(x)|\,d\mu(y)
\le
L(\psi)\rho
\to0
\qquad\text{as }\rho\to0.
\end{equation}
Hence
\begin{equation}
\label{eq.tr_d_nec1}
\|\operatorname{Tr}\psi\|_{L_p(E,\nu)}^p
\ge
\nu(B_r(x_0)\cap E).
\end{equation}
On the other hand, by \eqref{eq.lip_prop1}, \eqref{eq.lip_prop2}, and the local doubling property of \(\mu\),
\begin{equation}
\label{eq.cutoff_Lp_lip}
\|\psi\|_{L_p(X)}
\lesssim
\left(\mu(B_r(x_0))\right)^{1/p},
\qquad
\|\operatorname{lip}\psi\|_{L_p(X)}
\lesssim
r^{-1}\left(\mu(B_r(x_0))\right)^{1/p}.
\end{equation}
Since \(X\in\mathfrak U_p\), Remark~\ref{Rm.lip_est}, applied with \(s=\frac{\theta}{p}\) and $\delta = r$, gives
\begin{equation}
\label{eq.tr_d_nec2}
\|\psi\|_{B^{\theta/p}_{p,1}(X)}^p
\lesssim
\frac{\mu(B_r(x_0))}{r^\theta}.
\end{equation}
Therefore, by the boundedness of the trace operator,
\begin{equation}
\nu(B_r(x_0)\cap E)
\le
\|\operatorname{Tr}\psi\|_{L_p(E,\nu)}^p
\lesssim
\|\psi\|_{B^{\theta/p}_{p,1}(X)}^p
\lesssim
\frac{\mu(B_r(x_0))}{r^\theta}.
\end{equation}
This proves the upper codimension-\(\theta\) regularity of \(\nu\), and the theorem follows.
\end{proof}

\begin{Remark}
Theorem~\ref{Th.dir} shows that, under the additional assumption \(X\in\mathfrak U_p\), the upper codimension-\(\theta\) regularity of \(\nu\) is necessary for the boundedness of the trace operator
\begin{equation}
\operatorname{Tr}: B^{\theta/p}_{p,1}(X)\rightarrow L_p(E,\nu).
\end{equation}
However, this regularity is not necessary for the mere existence of traces.
\par
For instance, suppose that
\begin{equation}
E=\bigcup_{j=1}^{\infty}E_j,
\end{equation}
where the sets \(E_j\subset E\) are closed and \(\nu\)-measurable, and assume that for every \(j\in\mathbb N\) there exists a constant \(C_j>0\) such that
\begin{equation}
\label{eq.rem_d_tr_t}
\nu(B_r(x)\cap E_j)
\le
C_j
\frac{\mu(B_r(x))}{r^\theta},
\qquad \text{for all } (x, r)\in E_j\times(0,1].
\end{equation}
If the optimal constants in \eqref{eq.rem_d_tr_t} are unbounded, then \(\nu\) need not be upper codimension-\(\theta\) regular on the whole set \(E\). Nevertheless, by Theorem~\ref{Th.tr_ex} applied to each restriction \(\nu\lfloor_{E_j}\), every \(f\in B^{\theta/p}_{p,1}(X)\) has a trace to \(E_j\) for every \(j\in\mathbb N\), and hence has a trace to \(E\) for \(\nu\)-almost every point of \(E\).
\end{Remark}
\section{The inverse trace theorem}
Throughout this section, we fix the following data
\begin{enumerate}
\item  parameters $p\in[1,\infty)$ and $\theta\in(0,p)$;
\item \(X=(X,d,\mu)\in\mathfrak U_p\);
\item \(E\subset X\) is an Ahlfors--David codimension-\(\theta\) regular set.
\end{enumerate}

Our goal is to construct a bounded nonlinear extension operator
\begin{equation}
\label{eq.ext_goal}
\operatorname{Ext}:
L_p(E,\mathcal H_\theta\lfloor_E)
\rightarrow
B^{\theta/p}_{p,1}(X),
\end{equation}
which is a right inverse to the trace operator. This nonlinearity is not merely a feature of the construction: already in the classical endpoint case, one cannot expect a bounded linear extension operator (see \cite{bur_gold}).
\par
Unlike the direct trace theorem, the inverse theorem uses both sides of the Ahlfors--David codimension-\(\theta\) regularity. The lower codimension-\(\theta\) regularity is used to control the \(B^{\theta/p}_{p,1}(X)\)-norm of the extension, whereas the upper codimension-\(\theta\) regularity is used in the verification of the trace identity. The construction follows the Whitney-type approach from \cite[Sections~6--7]{mal_e}.
\par
We begin with a Whitney-type decomposition of a neighborhood of a closed set.

\begin{Th}
\label{Th.whit}
Let \(S\subset X\) be closed. Then there exists a family of balls
\(
\{B_{j,i}=B_{r_{j,i}}(p_{j,i})\}_{\substack{j\ge -1\\ i\in\mathcal I_j}},
\)
where each \(\mathcal I_j\) is at most countable, such that
\begin{enumerate}
\item
\begin{equation}
\{x\in X:\operatorname{dist}(x,S)\in(0,1)\}
\subset
\bigcup_{j\ge -1}\bigcup_{i\in\mathcal I_j}B_{j,i};
\end{equation}
\item for every \(\lambda\ge1\), there exists \(C(\lambda)>0\) such that
\begin{equation}
    \sum_{j\ge -1}\sum_{i\in\mathcal I_j}
    \chi_{\lambda B_{j,i}}
    \le
    C(\lambda);
\end{equation}
\item
\begin{equation}
    2^{-j-1}<r_{j,i}\le2^{-j},
    \qquad j\ge-1,\quad i\in\mathcal I_j;
\end{equation}
\item
\begin{equation}
    r_{j,i}
    =
    \frac18\operatorname{dist}(p_{j,i},S).
\end{equation}
\end{enumerate}
\end{Th}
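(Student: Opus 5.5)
The plan is the standard Whitney construction: slice the region near \(S\) into dyadic shells, place a maximal separated net in each shell, and read off the radii from property (4). The only nontrivial point is the overlap bound (2), and I expect it to hold only for \(\lambda<8\); see the last paragraph.

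\emph{Construction.} For \(j\ge -1\) put
\begin{equation*}
U_j:=\{x\in X:\ 4\cdot 2^{-j}<\operatorname{dist}(x,S)\le 8\cdot 2^{-j}\}.
\end{equation*}
The sets \(U_j\), \(j\ge-1\), are pairwise disjoint and their union is \(\{0<\operatorname{dist}(x,S)\le 16\}\), which contains \(\{0<\operatorname{dist}(x,S)<1\}\). Since \(X\) is separable, each \(U_j\) has an at most countable maximal \(2^{-j-4}\)-separated subset \(\{p_{j,i}\}_{i\in\mathcal I_j}\). Set \(r_{j,i}:=\frac18\operatorname{dist}(p_{j,i},S)\). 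Then (4) holds by definition, and \(p_{j,i}\in U_j\) gives \(2^{-j-1}<r_{j,i}\le 2^{-j}\), which is (3). For (1), a point \(y\) with \(\operatorname{dist}(y,S)\in(0,1)\) lies in some \(U_j\). By maximality there is \(i\) with \(d(y,p_{j,i})<2^{-j-4}<r_{j,i}\), so \(y\in B_{j,i}\).

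\emph{Overlap within one scale.} Fix \(x\) and \(\lambda\), and suppose \(x\in\lambda B_{j,i}\) for several \(i\) at the same level \(j\). All these centres lie in \(B_{2\lambda 2^{-j}}(x)\) and are \(2^{-j-4}\)-separated. Hence the balls \(B_{2^{-j-5}}(p_{j,i})\) are pairwise disjoint and contained in \(B_{(2\lambda+1)2^{-j}}(x)\). Lemma~\ref{Lm.loc_d_m} then bounds their number by some \(N(\lambda)\), uniformly in \(j\ge-1\), because all radii involved are at most a fixed multiple of \(2\).

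\emph{Number of contributing scales.} This is the main step. If \(x\in\lambda B_{j,i}\), then \(\operatorname{dist}(p_{j,i},S)=8r_{j,i}\) and \(d(x,p_{j,i})<\lambda r_{j,i}\), so
\begin{equation*}
(8-\lambda)\,r_{j,i}<\operatorname{dist}(x,S)<(8+\lambda)\,r_{j,i}.
\end{equation*}
When \(1\le\lambda<8\), both sides are nontrivial. They confine \(r_{j,i}\), and therefore \(2^{-j}\), to an interval around \(\operatorname{dist}(x,S)\) whose ratio of endpoints depends only on \(\lambda\). At most \(\log_2\frac{8+\lambda}{8-\lambda}+2\) values of \(j\) can contribute, and combined with the previous step this gives (2) with \(C(\lambda)=N(\lambda)\bigl(\log_2\frac{8+\lambda}{8-\lambda}+2\bigr)\).

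For \(x\in S\) and \(\lambda<8\), the left inequality shows that no dilated ball contains \(x\) at all. For \(x\) with \(\operatorname{dist}(x,S)\) large, the upper bound \(r_{j,i}\le2\) forces \(\operatorname{dist}(x,S)<2(8+\lambda)\), and otherwise the sum is zero.

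\emph{Where the plan breaks.} I expect the case \(\lambda\ge8\) to fail, and not because of this particular construction. Once \(\lambda>8\), the lower constraint on \(r_{j,i}\) disappears, and every scale with \(\operatorname{dist}(x,S)/(8+\lambda)<r\le2\) may contribute. In \(X=\mathbb R\) with \(S=\{0\}\), property (4) forces the centres into \(\{\pm 8r\}\). Property (1) then forces balls at every dyadic scale, and each of them contains a given small \(x\) after dilation by \(\lambda>8\). The sum therefore grows like \(\log(1/|x|)\), and it is infinite at points of \(S\).

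So I would prove (2) exactly as above for \(1\le\lambda<8\). At the point where \(\lambda\ge8\) would be needed, I would first check that later applications only use dilations \(\lambda<8\), or only need the bound \(\mu\)-a.e. on \(X\setminus S\) at a comparable scale. For \(S=E\) the set \(E\) is \(\mu\)-null by Lemma~\ref{Lm.AD_zero_mu}, but the logarithmic growth off \(S\) still has to be avoided by such a restriction. I would state that restriction explicitly rather than claim (2) for all \(\lambda\ge1\).
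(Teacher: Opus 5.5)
Your proof is correct, and it follows the standard Whitney construction behind the paper's one-line proof: a maximal separated net in each dyadic shell $\{\operatorname{dist}(\cdot,S)\approx 2^{-j}\}$, with Lemma~\ref{Lm.loc_d_m} bounding the count at each level.

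Your objection to (2) is also correct. As printed, (2) is false for every $\lambda\ge 8$. In your example $X=\mathbb R$, $S=\{0\}$, no family satisfying (1) and (4) can satisfy (2), so citing \cite[Proposition~4.1.15]{sob_mms} cannot supply it. The example even satisfies the standing assumptions of Section~4: $\{0\}$ is Ahlfors--David codimension-$1$ regular in $\mathbb R$, and one may take $p>1=\theta$. It also covers the endpoint $\lambda=8$, which you did not treat explicitly. For a ball centred at $8r_{j,i}>0$ one has $8B_{j,i}=(0,16r_{j,i})$. So the overlap at a small $x>0$ is still of order $\log_2(1/x)$, even though the sum vanishes at $0$ itself.

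You asked whether later applications need $\lambda\ge 8$. They do, in two places:
\begin{itemize}
\item In Step~1 of Lemma~\ref{Lm.whit_prop}, the paper uses $20B_{j,i}$ at points of $S$, but only for a fixed level $j$. Your ``overlap within one scale'' bound is exactly what is needed there. It holds for every $\lambda\ge1$, at every $x\in X$, and uniformly in $j$.
\item In the last step of Lemma~\ref{Lm.ext_lip_ball}, the paper uses bounded overlap of $\{U_{j,i}\}_{j,i}$ over all levels. Since $U_{j,i}\subset 9B_{j,i}$, this is (2) with $\lambda=9$ in disguise, and it fails on $E$. The step can be repaired: keep the factor $r_{j,i}^\theta\lesssim 2^{-j\theta}$ instead of bounding it by $r^\theta$, apply Lemma~\ref{Lm.whit_prop}(1) level by level, and sum the geometric series over the levels with $2^{-j}\lesssim r$.
\end{itemize}
All other uses involve only $\lambda\le 2$ or per-level bounds.

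The correct form of (2) is therefore the split you propose:
\begin{itemize}
\item a per-level bound $\sup_j\sum_{i\in\mathcal I_j}\chi_{\lambda B_{j,i}}\le C(\lambda)$ for all $\lambda\ge1$;
\item the full bound over all $j$ only for $1\le\lambda<8$.
\end{itemize}
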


\begin{proof}
The proof is the same as in \cite[Proposition~4.1.15]{sob_mms}, using Lemma~\ref{Lm.loc_d_m} to obtain the required bounded overlap.
\end{proof}
\par
We also fix a Lipschitz partition of unity \({\psi_{j,i}}\) subordinate to this Whitney decomposition; see \cite[p.~109]{sob_mms}. Thus,
\begin{enumerate}
\item \(\psi_{j,i}=0\) outside \(2B_{j,i}\), and \(\psi_{j,i}\ge c>0\) on \(B_{j,i}\);
\item
\begin{equation}
    \sum_{j\ge -1}\sum_{i\in\mathcal I_j}
    \psi_{j,i}(x)
    =
    1
\end{equation}
whenever \(\operatorname{dist}(x,S)\in(0,1)\);
\item
\begin{equation}
    L(\psi_{j,i})
    \lesssim
    r_{j,i}^{-1}.
\end{equation}
\end{enumerate}

Since \(X\) is proper, for every Whitney ball \(B_{j,i}\) we may choose a point \(\widehat p_{j,i}\in S\) such that
\begin{equation}
d(p_{j,i},\widehat p_{j,i})
=
\operatorname{dist}(p_{j,i},S).
\end{equation}
The choice of \(\widehat p_{j,i}\) is not necessarily unique, but this will not matter. We set
\begin{equation}
\label{eq.Uji_def}
U_{j,i}:=B_{r_{j,i}}(\widehat p_{j,i}).
\end{equation}
\par
For \(k\in\mathbb N\), define
\begin{equation}
\label{eq.Psi_k_def}
\Psi_k
:=
\sum_{j=k}^{\infty}
\sum_{i\in\mathcal I_j}
\psi_{j,i}.
\end{equation}
\par
The following elementary properties will be used repeatedly.
\begin{Lm}
\label{Lm.whit_prop}
Let \(S\subset X\) be closed, and let \(\{B_{j,i}\}\) be the Whitney decomposition from Theorem~\ref{Th.whit}. Then the following hold.
\begin{enumerate}
\item For every \(j\ge -1\),
\begin{equation}
\sum_{i\in\mathcal I_j}
\chi_{U_{j,i}}(x)
\le
C,
\qquad x\in S,
\end{equation}
where \(C\) is independent of \(j\).
\item Whenever
\begin{equation}
    2B_{j,i}\cap B_{l,m}\neq\emptyset,
\end{equation}
one has $|j-l|\le1$.
\item If
\begin{equation}
    J_{l,m}
    :=
    \{(j,i):2B_{j,i}\cap B_{l,m}\neq\emptyset\},
\end{equation}
then $|J_{l,m}|\le C$,
where \(C\) is independent of \(l\) and \(m\).
\item For every \(k\in\mathbb N\),
\begin{equation}
\label{eq.lip_Psi_k}
    \operatorname{lip}\Psi_k
    \lesssim
    2^k\chi_{G_k},
\end{equation}
where
\begin{equation}
\label{eq.Gk_def}
    G_k
    :=
    \bigcup_{\substack{j\ge -1\\ |j-k|\le1}}
    \bigcup_{i\in\mathcal I_j}
    2B_{j,i}.
\end{equation}
\end{enumerate}
\end{Lm}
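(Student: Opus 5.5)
The plan is to derive all four items from the Whitney properties of Theorem~\ref{Th.whit}: radii comparable to $2^{-j}$ with $r_{j,i}=\frac18\operatorname{dist}(p_{j,i},S)$, together with the bounded overlap of dilated balls. The uniform counting constants will come from Lemma~\ref{Lm.loc_d_m}, applied with uniformly locally doubling constants for radii at most a fixed scale (all radii are $\le 2$).

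\emph{Item (2).} Let $z\in 2B_{j,i}\cap B_{l,m}$. Since $\operatorname{dist}(\cdot,S)$ is $1$-Lipschitz,
\[
\operatorname{dist}(z,S)\in[8r_{j,i}-2r_{j,i},\,8r_{j,i}+2r_{j,i}]=[6r_{j,i},10r_{j,i}],
\]
and likewise $\operatorname{dist}(z,S)\in[7r_{l,m},9r_{l,m}]$. Hence $6r_{j,i}\le 9r_{l,m}$ and $7r_{l,m}\le 10r_{j,i}$, so $r_{j,i}/r_{l,m}\in[7/10,3/2]$. Combining this with $2^{-j-1}<r_{j,i}\le 2^{-j}$ gives $2^{-j-1}<\tfrac32 2^{-l}$, i.e. $l<j+1+\log_2(3/2)$, hence $l\le j+1$. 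Symmetrically $j\le l+1$, so $|j-l|\le1$.

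\emph{Item (3).} By (2), every $(j,i)\in J_{l,m}$ has radius comparable to $r_{l,m}$. Also $2B_{j,i}\subset B_{Cr_{l,m}}(p_{l,m})$ for a fixed constant $C$, and in particular $p_{j,i}$ lies there. By (2) of Theorem~\ref{Th.whit} with $\lambda=1$ the balls $B_{j,i}$ have bounded overlap, so I would rather argue via counting. I expect the standard Whitney construction (\cite[Proposition~4.1.15]{sob_mms}) to give that the shrunken balls $\frac15B_{j,i}$ are pairwise disjoint; if that is not available, one can bound the count directly. Each point of $B_{l,m}$ lies in at most $C(2)$ balls $2B_{j,i}$. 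Choose a maximal $c\,r_{l,m}$-separated family in $B_{Cr_{l,m}}(p_{l,m})$; Lemma~\ref{Lm.loc_d_m} bounds its cardinality uniformly. Each ball $B_{j,i}$ with $(j,i)\in J_{l,m}$ contains a ball of radius $\sim r_{l,m}$ around $p_{j,i}$, so it contains one of these separated points once $c$ is small. Bounded overlap then bounds the number of $(j,i)$ per separated point. This yields $|J_{l,m}|\le C$.

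\emph{Item (1).} Fix $x\in S$ and $j$. If $x\in U_{j,i}$, then $d(x,p_{j,i})<r_{j,i}+8r_{j,i}=9r_{j,i}$, so $x\in 9B_{j,i}$. By (2) of Theorem~\ref{Th.whit} with $\lambda=9$, the number of such $i$ is at most $C(9)$, uniformly in $j$.

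\emph{Item (4).} On the set $\{\operatorname{dist}(\cdot,S)\in(0,1)\}$ we have $\Psi_k=1-\sum_{j<k}\sum_i\psi_{j,i}$. Take a point $x$ outside $G_k$. Every $2B_{j,i}$ containing $x$, or meeting a small neighbourhood of $x$, has $|j-k|\ge2$ by (2) applied to nearby points. So locally either all active indices satisfy $j\ge k+2$, giving $\Psi_k\equiv1$, or all satisfy $j\le k-2$, giving $\Psi_k\equiv0$. Either way $\operatorname{lip}\Psi_k(x)=0$. A little care is needed near $S$ and near $\operatorname{dist}=1$: near $S$ only large $j$ occur, so $\Psi_k=1$ there, and $\Psi_k$ extends by continuity. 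On $G_k$, only the boundedly many terms with $|j-k|\le 2$ can vary near $x$, by (2)–(3). Each such term has Lipschitz constant $\lesssim r_{j,i}^{-1}\approx2^k$, so $\operatorname{lip}\Psi_k(x)\lesssim2^k$.

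I expect the main obstacle to be the uniform counting in (3), namely getting disjointness or separation of the Whitney balls. The plan there is to rely on the disjoint $\frac15$-shrunken balls from the construction, combined with Lemma~\ref{Lm.loc_d_m}.
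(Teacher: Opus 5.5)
Your proposal is correct and follows the paper's argument item by item: (1) via $x\in 9B_{j,i}$ and bounded overlap; (2) via the $1$-Lipschitz property of $\operatorname{dist}(\cdot,S)$, giving $r_{j,i}/r_{l,m}\in(7/10,3/2)$; (3) via comparable radii plus a counting argument with Lemma~\ref{Lm.loc_d_m}, where your separated-net argument combined with bounded overlap of the $B_{j,i}$ supplies a detail the paper leaves implicit, since Lemma~\ref{Lm.loc_d_m} counts disjoint balls and Whitney balls only have bounded overlap; and (4) via local constancy of $\Psi_k$ off $G_k$ and $L(\psi_{j,i})\lesssim 2^{k}$ on $G_k$. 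One small correction in (4): at a point $x\notin G_k$ on the boundary of some $2B_{k\pm1,i}$, indices with $|j-k|=1$ are active near $x$, so your claim $|j-k|\ge2$ is too strong; instead fix $B_{l,m}\ni x$, note $|l-k|\ge2$, and apply (2) to see that either all $2B_{j,i}$ meeting $B_{l,m}$ have $j\ge k+1$ or all have $j\le k-1$, which is exactly what local constancy needs (your remark about setting $\Psi_k=1$ on $S$ is a genuine point the paper handles only implicitly through $\mu(E)=0$).
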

\begin{proof}
\emph{Step 1.}
First, fix \(j\ge -1\) and \(x\in S\). Put
\begin{equation}
    I_j(x):=\{i\in\mathcal I_j:x\in U_{j,i}\}.
\end{equation}
We show that \(|I_j(x)|\le C\), uniformly in \(j\) and \(x\).
If \(i\in I_j(x)\), then \(x\in U_{j,i}=B_{r_{j,i}}(\widehat p_{j,i})\), and hence $d(x,\widehat p_{j,i})<r_{j,i}$.
Since $ d(p_{j,i},\widehat p_{j,i})
    =
    \operatorname{dist}(p_{j,i},S)
    =
    8r_{j,i}$,
we obtain
\begin{equation}
    d(p_{j,i},x)
    \le
    d(p_{j,i},\widehat p_{j,i})
    +
    d(\widehat p_{j,i},x)
    <
    9r_{j,i}
    \le
    9\cdot 2^{-j}.
\end{equation}
Therefore $x \in B_{9\cdot 2^{-j}}(p_{j, i}) \subset 20 B_{j, i}$. By the bounded overlap of Whitney balls (property (2) in Theorem~\ref{Th.whit}), we obtain the desired estimate for $|I_j(x)|$.
\par
\emph{Step 2.}
Assume that $2B_{j,i}\cap B_{l,m}\neq\emptyset$.
Then there exists \(z\in 2B_{j,i}\cap B_{l,m}\), and hence
$d(p_{j,i},p_{l,m})
    <
    2r_{j,i}+r_{l,m}$.
Using $\operatorname{dist}(p_{j,i},S)=8r_{j,i}$ and $\operatorname{dist}(p_{l,m},S)=8r_{l,m}$,
we obtain
\begin{equation}
    8|r_{j,i}-r_{l,m}|
    =
    \left|
    \operatorname{dist}(p_{j,i},S)
    -
    \operatorname{dist}(p_{l,m},S)
    \right|
    \le
    d(p_{j,i},p_{l,m})
    <
    2r_{j,i}+r_{l,m}.
\end{equation}
Considering separately the cases \(r_{j,i}\ge r_{l,m}\) and
\(r_{l,m}\ge r_{j,i}\), one obtains
\begin{equation}
    r_{j,i}< \frac32 r_{l,m},
    \qquad
    r_{l,m}< \frac{10}{7}r_{j,i}.
\end{equation}
Since
\begin{equation}
    2^{-j-1}<r_{j,i}\le2^{-j},
    \qquad
    2^{-l-1}<r_{l,m}\le2^{-l},
\end{equation}
this implies \(|j-l|\le1\).
\par
\emph{Step 3.}
The estimate \(|J_{l,m}|\le C\) now follows from the previous part and the bounded overlap of the Whitney balls. Indeed, all balls \(B_{j,i}\) with \((j,i)\in J_{l,m}\) have radii comparable to \(r_{l,m}\), their centers lie in a fixed enlargement of \(B_{l,m}\), and Lemma~\ref{Lm.loc_d_m} gives a uniform bound on their number.
\par
\emph{Step 4.} Since
\begin{equation}
    \Psi_k=\sum_{j=k}^{\infty}\sum_{i\in\mathcal I_j}\psi_{j,i},
\end{equation}
we have
\begin{equation}
    \operatorname{lip}\Psi_k(x)
    \le
    \sum_{j=k}^{\infty}\sum_{i\in\mathcal I_j}
    \operatorname{lip}\psi_{j,i}(x).
\end{equation}
If \(x\notin G_k\), then either no function \(\psi_{j,i}\) with \(j\ge k\) is nonzero near \(x\), or the sum is locally constant and equal to \(1\). Hence \(\operatorname{lip}\Psi_k(x)=0\). If \(x\in G_k\), then only indices with \(|j-k|\le1\) may contribute to the variation of \(\Psi_k\), and for them
\begin{equation}
    \operatorname{lip}\psi_{j,i}
    \lesssim
    r_{j,i}^{-1}
    \lesssim
    2^k.
\end{equation}
Using the bounded overlap of the supports \(2B_{j,i}\), we obtain
\begin{equation}
    \operatorname{lip}\Psi_k(x)
    \lesssim
    2^k\chi_{G_k}(x).
\end{equation}
\end{proof}
\par
Now set \(S=E\). For $\phi\in L_p(E,\mathcal H_\theta\lfloor_E)$
put
\begin{equation}
\label{eq.phi_ji_def}
    \phi_{j,i}
    :=
    \fint\limits_{U_{j,i}}
    \phi(x)\,d\mathcal H_\theta\lfloor_E(x).
\end{equation}
Let $\phi\in\operatorname{LIP}(E)\cap L_p(E,\mathcal H_\theta\lfloor_E)$
have bounded support. Choose \(\overline x\in E\) and \(R\ge1\) such that $\operatorname{supp}\phi\subset B_R(\overline x)$.
If \(\phi=0\) in \(L_p(E,\mathcal H_\theta\lfloor_E)\), we set $\operatorname{Ext}\phi=0$.
Otherwise, choose \(k=k(\phi)\in\mathbb N\) so large that
\begin{equation}
\label{eq.k_phi_choice}
    2^{-k}L(\phi)
    \left(
    \mathcal H_\theta\lfloor_E(B_{c_0R}(\overline x))
    \right)^{1/p}
    \le
    \|\phi\|_{L_p(E,\mathcal H_\theta\lfloor_E)},
\end{equation}
where \(c_0>1\) is a fixed geometric constant. Define
\begin{equation}
\label{eq.ext_lip_def}
    \operatorname{Ext}\phi(x)
    :=
    \sum_{j=k}^{\infty}
    \sum_{i\in\mathcal I_j}
    \phi_{j,i}\psi_{j,i}(x),
    \qquad x\in X\setminus E.
\end{equation}
By Lemma~\ref{Lm.AD_zero_mu}, we have $\mu(E)=0$. Hence the values of $\operatorname{Ext}\phi$
on $E$ do not affect any $L_p(X,\mu)$-estimates. However, for Lipschitz boundary
data we shall use the following representative:
\begin{equation}
    \operatorname{Ext}\phi(x)=\phi(x), \qquad \text{for all } x\in E.
\end{equation}
This choice will allow us to apply Remark~\ref{Rm.lip_est}.
\begin{Lm}
\label{Lm.lip_representative_ext}
Let $\phi\in \operatorname{LIP}(E)\cap L_p(E,\mathcal{H}_\theta\lfloor_E)$ have bounded support,
and let $\operatorname{Ext}\phi$ be defined by \eqref{eq.ext_lip_def} on $X\setminus E$ and by
$\operatorname{Ext}\phi=\phi$ on $E$. Then $\operatorname{Ext}\phi\in \operatorname{LIP}(X)$.
Moreover, if $f:=\operatorname{Ext}\phi$, then, for every $l\ge k-1$ and every
$m\in \mathcal{I}_l$,
\begin{equation}\label{eq:lip_ext_pointwise}
    \operatorname{lip} f(x)
    \lesssim
    \frac{1}{r_{l,m}}
    \sum_{(j,i)\in J_{l,m}}
    |\phi_{j,i}-\phi_{l,m}|
    +
    2^k |\phi_{l,m}|\chi_{G_k}(x),
    \qquad x\in B_{l,m}.
\end{equation}
\end{Lm}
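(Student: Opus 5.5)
We have to prove two things: that $f=\operatorname{Ext}\phi$ is Lipschitz on all of $X$, and the pointwise bound \eqref{eq:lip_ext_pointwise}. We establish the local bound on Whitney balls first, then obtain the global Lipschitz property by comparing values near $E$ with $\phi$.

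\emph{Local estimate on Whitney balls.} Fix $l\ge k-1$, $m\in\mathcal I_l$ and $x\in B_{l,m}$. By Lemma~\ref{Lm.whit_prop}(2), only the $\psi_{j,i}$ with $(j,i)\in J_{l,m}$ are nonzero near $x$, and $|j-l|\le 1$ for these. On $B_{l,m}$ we use $\sum_{j\ge k}\sum_i\psi_{j,i}=\Psi_k$ to write
\begin{equation*}
f=\sum_{(j,i)\in J_{l,m},\,j\ge k}(\phi_{j,i}-\phi_{l,m})\psi_{j,i}+\phi_{l,m}\Psi_k .
\end{equation*}
Then $\operatorname{lip}f(x)\le\sum_{J_{l,m}}|\phi_{j,i}-\phi_{l,m}|\,L(\psi_{j,i})+|\phi_{l,m}|\operatorname{lip}\Psi_k(x)$. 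We control $L(\psi_{j,i})\lesssim r_{j,i}^{-1}\approx r_{l,m}^{-1}$ using $|j-l|\le1$, and $\operatorname{lip}\Psi_k\lesssim 2^k\chi_{G_k}$ by \eqref{eq.lip_Psi_k}. This gives \eqref{eq:lip_ext_pointwise}. Since $|J_{l,m}|\le C$, the right-hand side is finite.

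\emph{Uniform bound for the coefficients.} Because $\phi$ is Lipschitz and $U_{j,i}$, $U_{l,m}$ lie in a ball around $\widehat p_{l,m}$ of radius $\lesssim r_{l,m}$, we get $|\phi_{j,i}-\phi_{l,m}|\lesssim L(\phi)r_{l,m}$. Here we use $d(\widehat p_{j,i},\widehat p_{l,m})\lesssim r_{l,m}$ by the triangle inequality through $p_{j,i}$ and $p_{l,m}$. So the first term is $\lesssim L(\phi)$, and the second is $\lesssim 2^k\|\phi\|_\infty$. Hence $\operatorname{lip}f$ is bounded on $\{0<\operatorname{dist}(\cdot,E)<1\}$.

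\emph{Global Lipschitz property.} This is the main obstacle, since $\operatorname{lip}$ bounds do not automatically give Lipschitz continuity across $E$ or at the outer edge of the support. Away from $E$, $f$ vanishes where $\operatorname{dist}(x,E)\ge 1$, since no $\psi_{j,i}$ with $j\ge k\ge1$ is supported there. We would instead compare values directly, in two cases.

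For $x\in X\setminus E$ with $x\in B_{l,m}$, write $f(x)-\phi(\widehat p_{l,m})=\sum_{j,i}(\phi_{j,i}-\phi(\widehat p_{l,m}))\psi_{j,i}(x)$ when $\Psi_k(x)=1$, which gives $|f(x)-\phi(\widehat p_{l,m})|\lesssim L(\phi)r_{l,m}\approx L(\phi)\operatorname{dist}(x,E)$. When $\Psi_k(x)<1$ we have $\operatorname{dist}(x,E)\approx 2^{-k}$, and the extra term $|\phi(\widehat p)|(1-\Psi_k)$ is bounded by $\|\phi\|_\infty\lesssim 2^k\|\phi\|_\infty\operatorname{dist}(x,E)$. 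Hence $|f(x)-\phi(\xi)|\lesssim C_\phi\,d(x,\xi)$ for any $\xi\in E$, using $|\phi(\widehat p)-\phi(\xi)|\le L(\phi)d(\widehat p,\xi)\lesssim L(\phi)d(x,\xi)$.

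For two points $x,y\notin E$, we split into two sub-cases. If $d(x,y)<\frac1{16}\max\operatorname{dist}(\cdot,E)$, both points lie in a region with comparable distance to $E$. There $f$ is a finite sum of Lipschitz functions with the uniform Lipschitz bound above, so we use the explicit representation with $L(\psi_{j,i})$ rather than a path argument; no geodesic structure is assumed. Otherwise $d(x,y)\gtrsim\operatorname{dist}(x,E)+\operatorname{dist}(y,E)$, and we pass through nearest points in $E$ and the previous case. Points of $E$ are handled directly by $L(\phi)$.

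Combining the cases gives $f\in\operatorname{LIP}(X)$.
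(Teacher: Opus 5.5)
Your argument is correct and follows the paper's route: the pointwise bound uses the same decomposition of $f$ on $B_{l,m}$ into $\sum(\phi_{j,i}-\phi_{l,m})\psi_{j,i}$ plus $\phi_{l,m}\Psi_k$. The boundary estimate is the same comparison of $f(x)$ with values of $\phi$ on $E$ through the partition of unity, and your near/far split for pairs off $E$ (enlarged Whitney ball versus nearest points in $E$) fills in the global step that the paper compresses into ``combining''. One small correction: in general $f$ vanishes only where $\operatorname{dist}(x,E)\ge 10\cdot 2^{-k}$, not automatically where $\operatorname{dist}(x,E)\ge 1$, so either take $k\ge 4$ (harmless in \eqref{eq.k_phi_choice}) or use that threshold instead.
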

\begin{proof}
Put $f:=\operatorname{Ext}\phi$. Since $\phi$ is Lipschitz and has bounded support,
it is bounded on $E$. The family $\{2B_{j,i}\}_{j,i}$ has uniformly bounded overlap, and
each function $\psi_{j,i}$ is Lipschitz with $L(\psi_{j,i})\lesssim r_{j,i}^{-1}$.
Thus the sum defining $f$ is locally finite on $X\setminus E$, and $f$ is locally Lipschitz
there.

Let $l\ge k-1$, $m\in \mathcal I_l$, and let $x,y\in B_{l,m}$. Only those functions $\psi_{j,i}$
whose supports meet $B_{l,m}$ may contribute to the difference $f(x)-f(y)$. Hence, by
the definition of $J_{l,m}$,
\begin{equation}
\begin{aligned}
|f(x)-f(y)|
&=
\left|
\sum_{(j,i)\in J_{l,m}}
\phi_{j,i}\bigl(\psi_{j,i}(x)-\psi_{j,i}(y)\bigr)
\right|  \\
&=
\left|
\sum_{(j,i)\in J_{l,m}}
(\phi_{j,i}-\phi_{l,m})
\bigl(\psi_{j,i}(x)-\psi_{j,i}(y)\bigr)
+
\phi_{l,m}\bigl(\Psi_k(x)-\Psi_k(y)\bigr)
\right|.
\end{aligned}
\end{equation}
Using the Lipschitz bound for $\psi_{j,i}$, the comparability $r_{j,i}\approx r_{l,m}$
for $(j,i)\in J_{l,m}$, and Lemma~\ref{Lm.whit_prop}, we get
\begin{equation}
    |f(x)-f(y)|
    \lesssim
    d(x,y)
    \left(
    \frac{1}{r_{l,m}}
    \sum_{(j,i)\in J_{l,m}}
    |\phi_{j,i}-\phi_{l,m}|
    +
    2^k |\phi_{l,m}|
    \right).
\end{equation}
Passing to the limit as $y\to x$ gives \eqref{eq:lip_ext_pointwise}, the factor
$\chi_{G_k}$ in the second term follows from the estimate
$\operatorname{lip}\Psi_k\lesssim 2^k\chi_{G_k}$.
\par
It remains only to justify the Lipschitz continuity across $E$. Let $\xi\in E$ and
$x\in X\setminus E$. If $d(x,E)$ is sufficiently small, then only Whitney balls of
levels $j\ge k$ may contribute to $f(x)$, and the partition of unity gives
\begin{equation}
    f(x)-\phi(\xi)
    =
    \sum_{j\ge k}\sum_{i\in \mathcal I_j}
    \bigl(\phi_{j,i}-\phi(\xi)\bigr)\psi_{j,i}(x).
\end{equation}
Whenever $\psi_{j,i}(x)\ne0$, the Whitney construction implies
\begin{equation}
    r_{j,i}\lesssim d(x,E)\le d(x,\xi),
    \qquad
    d(\widehat p_{j,i},\xi)\lesssim d(x,\xi).
\end{equation}
Since $U_{j,i}=B_{r_{j,i}}(\widehat p_{j,i})$, the Lipschitz continuity of $\phi$ gives
\begin{equation}
    |\phi_{j,i}-\phi(\xi)|
    \lesssim L(\phi)d(x,\xi).
\end{equation}
Using the bounded overlap of the partition, we obtain
\begin{equation}
    |f(x)-\phi(\xi)|\lesssim L(\phi)d(x,\xi).
\end{equation}
Therefore $f$ is Lipschitz across $E$.

Combining the Lipschitz continuity on $X\setminus E$ with the above boundary estimate,
we conclude that $f\in \operatorname{LIP}(X)$. Since $\mu(E)=0$, the pointwise estimate
\eqref{eq:lip_ext_pointwise} holds for $\mu$-almost every $x\in X$.
\end{proof}
\par
The integer \(k(\phi)\) is part of the construction. This dependence on \(\phi\) is the reason why the extension operator constructed below is nonlinear.
\begin{Lm}
\label{Lm.lip_ext_norm}
There exists \(C>0\) such that every Lipschitz function $\phi\in L_p(E, \mathcal{H}_{\theta}\lfloor_E)$
with bounded support satisfies
\begin{equation}
\label{eq.lip_ext_norm}
    \|\operatorname{Ext}\phi\|_{B^{\theta/p}_{p,1}(X)}
    \le
    C\|\phi\|_{L_p(E,\mathcal H_\theta\lfloor_E)}.
\end{equation}
\end{Lm}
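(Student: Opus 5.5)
The plan is to estimate the two terms in Remark~\ref{Rm.lip_est} separately for $f:=\operatorname{Ext}\phi$, applying that remark with $s=\theta/p$, $q=1$ and $\delta=2^{-k}$, where $k=k(\phi)$ is the integer chosen in \eqref{eq.k_phi_choice}. By Lemma~\ref{Lm.lip_representative_ext}, $f\in\operatorname{LIP}(X)$, so the remark applies once we know that $f$ and $\operatorname{lip}f$ lie in $L_p(X)$. The remark then reduces \eqref{eq.lip_ext_norm} to the two bounds
\begin{equation*}
\|f\|_{L_p(X)}\lesssim 2^{-k\theta/p}\|\phi\|_{L_p(E,\mathcal H_\theta\lfloor_E)},
\qquad
\|\operatorname{lip}f\|_{L_p(X)}\lesssim 2^{k(1-\theta/p)}\|\phi\|_{L_p(E,\mathcal H_\theta\lfloor_E)}.
\end{equation*}
Since $2^{-k\theta/p}\le1$, the first bound also controls the $L_p$ part of the Besov norm.

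The basic geometric input is a comparison of measures on Whitney balls. For a Whitney ball $B_{l,m}$, the ball $U_{l,m}$ is centred at $\widehat p_{l,m}$, and $d(p_{l,m},\widehat p_{l,m})=8r_{l,m}$. Local doubling (via Lemma~\ref{Lm.doubl_meas} applied at radius $\approx 10r_{l,m}$) therefore gives $\mu(B_{l,m})\approx\mu(U_{l,m})$. The lower Ahlfors--David bound \eqref{eq.coreg} then yields
\begin{equation*}
\mu(B_{l,m})\lesssim r_{l,m}^\theta\,\mathcal H_\theta(U_{l,m}\cap E)\approx 2^{-l\theta}\,\mathcal H_\theta\lfloor_E(U_{l,m}).
\end{equation*}
Jensen's inequality gives $|\phi_{l,m}|^p\le\fint_{U_{l,m}}|\phi|^p\,d\mathcal H_\theta\lfloor_E$. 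By Lemma~\ref{Lm.whit_prop}(1), the balls $U_{l,m}$ with $l$ fixed have bounded overlap on $E$. Combining these facts gives, for every $l$,
\begin{equation*}
\sum_m\mu(B_{l,m})|\phi_{l,m}|^p\lesssim 2^{-l\theta}\|\phi\|_{L_p(E,\mathcal H_\theta\lfloor_E)}^p.
\end{equation*}

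\emph{$L_p$ bound.} The function $f$ vanishes off $\bigcup_{l\ge k-1}\bigcup_m B_{l,m}$, by Lemma~\ref{Lm.whit_prop}(2). On each $B_{l,m}$ we have $|f|\lesssim\sum_{(j,i)\in J_{l,m}}|\phi_{j,i}|$ with $|J_{l,m}|\le C$ and $|j-l|\le1$. Summing the level estimate above over $l\ge k-1$ gives $\|f\|_p^p\lesssim\sum_{l\ge k-1}2^{-l\theta}\|\phi\|^p\lesssim 2^{-k\theta}\|\phi\|^p$, which is the first bound.

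\emph{Gradient bound.} Here I integrate the pointwise estimate \eqref{eq:lip_ext_pointwise} over each $B_{l,m}$.

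The second term, $2^k|\phi_{l,m}|\chi_{G_k}$, lives only on levels $|l-k|\le1$. By the level estimate it contributes at most $2^{kp}2^{-k\theta}\|\phi\|^p$.

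For the first term, take $(j,i)\in J_{l,m}$. The balls $U_{j,i}$ and $U_{l,m}$ have radii comparable to $r_{l,m}$, and their centres are at distance $\lesssim r_{l,m}$, since both are close to nearby Whitney centres. The Lipschitz property of $\phi$ therefore gives $|\phi_{j,i}-\phi_{l,m}|\lesssim L(\phi)r_{l,m}$. Consequently
\begin{equation*}
\int_{B_{l,m}}\Bigl(r_{l,m}^{-1}\sum_{J_{l,m}}|\phi_{j,i}-\phi_{l,m}|\Bigr)^p d\mu\lesssim L(\phi)^p\mu(B_{l,m}).
\end{equation*}
This term is nonzero only if some $U_{j,i}$ meets $\operatorname{supp}\phi\subset B_R(\overline x)$, and then $U_{l,m}\subset B_{c_0R}(\overline x)$ for a suitable geometric constant $c_0$; this is how $c_0$ is fixed. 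Using $\mu(B_{l,m})\lesssim2^{-l\theta}\mathcal H_\theta\lfloor_E(U_{l,m})$ and bounded overlap, and then summing over $l\ge k-1$, this part is bounded by
\begin{equation*}
2^{-k\theta}L(\phi)^p\,\mathcal H_\theta\lfloor_E(B_{c_0R}(\overline x))\le 2^{-k\theta}2^{kp}\|\phi\|^p,
\end{equation*}
where the last inequality is exactly the choice \eqref{eq.k_phi_choice}. Both contributions give $\|\operatorname{lip}f\|_p\lesssim2^{k(1-\theta/p)}\|\phi\|$. Plugging both bounds into Remark~\ref{Rm.lip_est} yields $\delta^{1-\theta/p}\|\operatorname{lip}f\|_p+\delta^{-\theta/p}\|f\|_p\lesssim\|\phi\|$.

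I expect the main obstacle to be the geometric bookkeeping in the difference term. One must check uniformly that neighbouring $U$-balls are within distance $\lesssim r_{l,m}$ of each other, and that all contributing $U_{l,m}$ lie in one fixed enlargement $B_{c_0R}(\overline x)$. One must also check that the comparison $\mu(B_{l,m})\approx\mu(U_{l,m})$ only uses radii $\lesssim 10\cdot2$, so that the local doubling constants stay uniform; levels $l\ge k-1\ge0$ keep all radii bounded.
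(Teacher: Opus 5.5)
Your proposal is correct and follows the paper's proof essentially step by step. It has the same $L_p$ bound $\|f\|_{L_p(X)}^p\lesssim 2^{-k\theta}\|\phi\|^p_{L_p(E,\mathcal H_\theta\lfloor_E)}$ from Jensen, doubling and lower codimension-$\theta$ regularity, the same splitting of $\|\operatorname{lip}f\|_{L_p(X)}^p$ into the $\chi_{G_k}$ term and the difference term (the latter controlled by $L(\phi)$ and the choice \eqref{eq.k_phi_choice}), and the same final use of Remark~\ref{Rm.lip_est} with $\delta=2^{-k}$. The differences are cosmetic: the $G_k$ term actually lives on levels $|l-k|\le 2$ rather than $|l-k|\le1$, and the paper gets the $L_p$ bound directly from $|f|^p\lesssim\sum_{j\ge k}\sum_i|\phi_{j,i}|^p\chi_{2B_{j,i}}$ rather than by reindexing over $J_{l,m}$.
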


\begin{proof}
Let \(\phi\) be a Lipschitz function with bounded support. Choose \(\overline x\in E\) and \(R\ge1\) such that $ \operatorname{supp}\phi\subset B_R(\overline x)$.
Let \(k=k(\phi)\) be chosen as in \eqref{eq.k_phi_choice}, and put $f:=\operatorname{Ext}\phi$.
\par
\emph{Step 1: the \(L_p(X)\)-estimate.}
We first estimate the \(L_p(X)\)-norm of \(f\). Since \(\operatorname{supp}\psi_{j,i}\subset 2B_{j,i}\) and the balls \(2B_{j,i}\) have uniformly bounded overlap, Jensen's inequality gives
\begin{equation}
\label{eq.tr_r_l_1}
\begin{split}
    \|f\|_{L_p(X)}^p
    &\lesssim
    \sum_{j=k}^{\infty}
    \sum_{i\in\mathcal I_j}
    |\phi_{j,i}|^p\mu(2B_{j,i}) \le
    \sum_{j=k}^{\infty}
    \sum_{i\in\mathcal I_j}
    \mu(2B_{j,i})
    \fint\limits_{U_{j,i}}
    |\phi(x)|^p\,d\mathcal H_\theta\lfloor_E(x).
\end{split}
\end{equation}
For every \(j\ge k\), the local doubling property of \(\mu\) implies
\begin{equation}
\label{eq.doubling_for_ext}
    \mu(2B_{j,i})
    =
    \mu(B_{2r_{j,i}}(p_{j,i}))
    \lesssim
    \mu(B_{r_{j,i}}(\widehat p_{j,i})).
\end{equation}
Indeed, \(d(p_{j,i},\widehat p_{j,i})=8r_{j,i}\), and hence $B_{2r_{j,i}}(p_{j,i})
    \subset
    B_{10r_{j,i}}(\widehat p_{j,i})$.
Thus \eqref{eq.doubling_for_ext} follows by applying the local doubling property a finite number of times. By the lower codimension-\(\theta\) regularity of \(\mathcal H_\theta\lfloor_E\), we have
\begin{equation}
\label{eq.phi_ji_mu_est}
\begin{split}
    \mu(2B_{j,i})
    \fint\limits_{U_{j,i}}
    |\phi(x)|^p\,d\mathcal H_\theta\lfloor_E(x)
    \lesssim
    r_{j,i}^{\theta}
    \int\limits_{U_{j,i}}
    |\phi(x)|^p\,d\mathcal H_\theta\lfloor_E(x).
\end{split}
\end{equation}
Therefore, using the bounded overlap of the family \(\{U_{j,i}\}_{i\in\mathcal I_j}\) for each fixed \(j\), we obtain
\begin{equation}
\label{eq.ext_Lp_est}
\begin{split}
    \|f\|_{L_p(X)}^p
    \lesssim
    \sum_{j=k}^{\infty}
    \sum_{i\in\mathcal I_j}
    2^{-j\theta}
    \int\limits_{U_{j,i}}
    |\phi(x)|^p\,d\mathcal H_\theta\lfloor_E(x) \\
    \lesssim
    \sum_{j=k}^{\infty}
    2^{-j\theta}
    \|\phi\|_{L_p(E,\mathcal H_\theta\lfloor_E)}^p \lesssim
    2^{-k\theta}
    \|\phi\|_{L_p(E,\mathcal H_\theta\lfloor_E)}^p.
\end{split}
\end{equation}
\par
\emph{Step 2: the \(L_p(X)\)-estimate for \(\operatorname{lip}f\).} By Lemma~\ref{Lm.lip_representative_ext}, for \(\mu\)-almost every \(x\in B_{l,m}\), \(l\ge k-1\), \(m\in\mathcal I_l\), we have \begin{equation} \label{eq.lip_ext_pointwise_used} \operatorname{lip} f(x) \lesssim \frac{1}{r_{l,m}} \sum_{(j,i)\in J_{l,m}} |\phi_{j,i}-\phi_{l,m}| + 2^k |\phi_{l,m}|\chi_{G_k}(x). \end{equation} Integrating \eqref{eq.lip_ext_pointwise_used} over \(X\), and using the bounded overlap of the Whitney balls, gives \begin{equation} \label{eq.tr_r_l_18} \begin{split} \|\operatorname{lip}f\|_{L_p(X)}^p &\lesssim \sum_{l=k-1}^{\infty} \sum_{m\in\mathcal I_l} \mu(B_{l,m}) \left( \frac1{r_{l,m}} \sum_{(j,i)\in J_{l,m}} |\phi_{j,i}-\phi_{l,m}| \right)^p \\ &\quad+ \sum_{\substack{l\ge k-1,\ m\in\mathcal I_l:\\ B_{l,m}\cap G_k\neq\emptyset}} \mu(B_{l,m}) \left( 2^k|\phi_{l,m}| \right)^p =: I_1+I_2. \end{split} \end{equation} \par
\par
\emph{Step 3: estimate of \(I_2\).}
We first estimate \(I_2\). Since \(B_{l,m}\cap G_k\neq\emptyset\) implies \(|l-k|\le C\), the same argument as in \eqref{eq.phi_ji_mu_est} gives
\begin{equation}
\label{eq.tr_r_l_19}
\begin{split}
    I_2
    \lesssim
    2^{kp}
    \sum_{|l-k|\le C}
    \sum_{m\in\mathcal I_l}
    \mu(B_{l,m})|\phi_{l,m}|^p &\lesssim
    2^{kp}
    \sum_{|l-k|\le C}
    \sum_{m\in\mathcal I_l}
    r_{l,m}^{\theta}
    \int\limits_{U_{l,m}}
    |\phi(x)|^p\,d\mathcal H_\theta\lfloor_E(x) \\&
    \lesssim
    2^{k(p-\theta)}
    \|\phi\|_{L_p(E,\mathcal H_\theta\lfloor_E)}^p.
\end{split}
\end{equation}
\par
\emph{Step 4: estimate of \(I_1\).}
It remains to estimate \(I_1\). Since \(\operatorname{supp}\phi\subset B_R(\overline x)\), only those balls \(B_{l,m}\) for which \(U_{l,m}\) meets a fixed enlargement of \(B_R(\overline x)\) contribute to \(I_1\). More precisely, there exists a constant \(c_1>1\), independent of \(\phi\), such that it is enough to sum over those \(m\in\mathcal I_l\) for which
\begin{equation}
    U_{l,m}\cap B_{c_1R}(\overline x)\neq\emptyset.
\end{equation}
Fix \(l\ge k-1\). By the lower codimension-\(\theta\) regularity and the bounded overlap of the family \(\{U_{l,m}\}_{m\in\mathcal I_l}\),
\begin{equation}
\label{eq.tr_rev_new1}
\begin{split}
    \sum_{\substack{m\in\mathcal I_l:\\ U_{l,m}\cap B_{cR}(\overline x)\neq\emptyset}}
    \frac{\mu(B_{l,m})}{r_{l,m}^{\theta}}
    \lesssim
    \sum_{\substack{m\in\mathcal I_l:\\ U_{l,m}\cap B_{cR}(\overline x)\neq\emptyset}}
    \mathcal H_\theta\lfloor_E(U_{l,m})
    \lesssim
    \mathcal H_\theta\lfloor_E(B_{\widetilde cR}(\overline x)).
\end{split}
\end{equation}

Let \((j,i)\in J_{l,m}\). Then \(r_{j,i}\approx r_{l,m}\), and the sets \(U_{j,i}\) and \(U_{l,m}\) lie within distance \(Cr_{l,m}\) from one another. Hence, since \(\phi\) is Lipschitz,
\begin{equation}
\label{eq.average_difference_lip}
    |\phi_{j,i}-\phi_{l,m}|
    \lesssim
    r_{l,m}L(\phi).
\end{equation}
Using \eqref{eq.average_difference_lip}, the uniform bound for \(|J_{l,m}|\), and \eqref{eq.tr_rev_new1}, we obtain
\begin{equation}
\label{eq.tr_r_l_23}
\begin{split}
    I_1
    \lesssim
    \sum_{l=k-1}^{\infty}
    \sum_{\substack{m\in\mathcal I_l:\\ U_{l,m}\cap B_{cR}(\overline x)\neq\emptyset}}
    \mu(B_{l,m})&L(\phi)^p 
    \lesssim
    L(\phi)^p
    \sum_{l=k-1}^{\infty}
    2^{-l\theta}
    \mathcal H_\theta\lfloor_E(B_{\widetilde cR}(\overline x)) \\
    &\lesssim
    2^{-k\theta}
    L(\phi)^p
    \mathcal H_\theta\lfloor_E(B_{\widetilde cR}(\overline x)).
\end{split}
\end{equation}
\par
Combining \eqref{eq.tr_r_l_18}, \eqref{eq.tr_r_l_19}, and \eqref{eq.tr_r_l_23}, we get
\begin{equation}
\label{eq.lip_ext_est_full}
    \|\operatorname{lip}f\|_{L_p(X)}^p
    \lesssim
    2^{k(p-\theta)}
    \|\phi\|_{L_p(E,\mathcal H_\theta\lfloor_E)}^p
    +
    2^{-k\theta}
    L(\phi)^p
    \mathcal H_\theta\lfloor_E(B_{\widetilde cR}(\overline x)).
\end{equation}
Consequently,
\begin{equation}
\label{eq.scaled_lip_est}
    2^{-k(1-\frac{\theta}{p})}
    \|\operatorname{lip}f\|_{L_p(X)}
    \lesssim
    \|\phi\|_{L_p(E,\mathcal H_\theta\lfloor_E)}
    +
    2^{-k}L(\phi)
    \left(
    \mathcal H_\theta\lfloor_E(B_{\widetilde cR}(\overline x))
    \right)^{1/p}.
\end{equation}
By choosing the constant \(c_0\) in \eqref{eq.k_phi_choice} large enough ($c_0\ge \widetilde c$), the second term on the right-hand side of \eqref{eq.scaled_lip_est} is bounded by $\|\phi\|_{L_p(E,\mathcal H_\theta\lfloor_E)}$.
Therefore
\begin{equation}
\label{eq.scaled_lip_final}
    2^{-k(1-\frac{\theta}{p})}
    \|\operatorname{lip}f\|_{L_p(X)}
    \lesssim
    \|\phi\|_{L_p(E,\mathcal H_\theta\lfloor_E)}.
\end{equation}
\par
\emph{Step 5.}
By Lemma~\ref{Lm.lip_representative_ext}, we have \(f\in\operatorname{LIP}(X)\).
Therefore Remark~\ref{Rm.lip_est} can be applied with \(s=\frac{\theta}{p}\) and
\(\delta=2^{-k}\). Using \eqref{eq.ext_Lp_est} and \eqref{eq.scaled_lip_final}, we get
\begin{equation}
    \|f\|_{b^{\theta/p}_{p,1}(X)}
    \lesssim
    2^{-k(1-\frac{\theta}{p})}\|\operatorname{lip}f\|_{L_p(X)}
    +
    2^{k\frac{\theta}{p}}\|f\|_{L_p(X)}
    \lesssim
    \|\phi\|_{L_p(E,\mathcal H_\theta\lfloor_E)}.
\end{equation}
Moreover, \eqref{eq.ext_Lp_est} implies
\begin{equation}
    \|f\|_{L_p(X)}
    \lesssim
    2^{-k\frac{\theta}{p}}\|\phi\|_{L_p(E,\mathcal H_\theta\lfloor_E)}
    \lesssim
    \|\phi\|_{L_p(E,\mathcal H_\theta\lfloor_E)}.
\end{equation}
Combining the last two estimates gives \eqref{eq.lip_ext_norm}.
\end{proof}

\begin{Lm}
\label{Lm.ext_lip_ball}
Let $\phi\in\operatorname{LIP}(E)\cap L_p(E,\mathcal H_\theta\lfloor_E)$ have
bounded support, and let \(f:=\operatorname{Ext}\phi\) be defined by
\eqref{eq.ext_lip_def}. Then there exist constants \(c>1\) and \(C>0\), independent
of \(\phi\), such that for every \(x\in E\), every \(a\in\mathbb R\), and every
\(r>0\) sufficiently small,
\begin{equation}
\label{eq.ext_lip_ball}
    \int\limits_{B_r(x)}
    |f(y)-a|^p\,d\mu(y)
    \le
    C r^\theta
    \int\limits_{B_{cr}(x)}
    |\phi(y)-a|^p\,d\mathcal H_\theta\lfloor_E(y).
\end{equation}
\end{Lm}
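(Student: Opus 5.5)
The plan is to exploit that, near $E$, the function $f$ is a convex combination of averages $\phi_{j,i}$. Fix $x\in E$ and take $r$ so small that every Whitney ball with $2B_{j,i}\cap B_r(x)\neq\emptyset$ has level $j\ge k$ (i.e.\ $r\lesssim 2^{-k}$). Then on $B_r(x)\setminus E$ we have $\sum_{j\ge k}\sum_i\psi_{j,i}=1$, hence
\begin{equation*}
f(y)-a=\sum_{j\ge k}\sum_{i\in\mathcal I_j}(\phi_{j,i}-a)\psi_{j,i}(y),\qquad y\in B_r(x)\setminus E .
\end{equation*}
Since $\mu(E)=0$ by Lemma~\ref{Lm.AD_zero_mu}, the set $E$ may be ignored in the integral. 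By bounded overlap of $\{2B_{j,i}\}$ (Theorem~\ref{Th.whit}) and Jensen,
\begin{equation*}
\int_{B_r(x)}|f-a|^p\,d\mu\lesssim\sum_{(j,i):\,2B_{j,i}\cap B_r(x)\ne\emptyset}\mu(2B_{j,i})\,|\phi_{j,i}-a|^p
\le\sum_{(j,i)}\mu(2B_{j,i})\fint_{U_{j,i}}|\phi-a|^p\,d\mathcal H_\theta\lfloor_E .
\end{equation*}

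Next I use the geometry of the relevant balls. If $2B_{j,i}$ meets $B_r(x)$ with $x\in E$, then $8r_{j,i}=\operatorname{dist}(p_{j,i},E)\le d(p_{j,i},x)<2r_{j,i}+r$, so $r_{j,i}<r/6$; consequently $d(\widehat p_{j,i},x)\le 8r_{j,i}+2r_{j,i}+r\lesssim r$, and $U_{j,i}\subset B_{cr}(x)$ for a fixed $c$. Exactly as in \eqref{eq.doubling_for_ext}--\eqref{eq.phi_ji_mu_est}, local doubling and the lower codimension-$\theta$ regularity give $\mu(2B_{j,i})\lesssim r_{j,i}^\theta\,\mathcal H_\theta\lfloor_E(U_{j,i})$, so each term is bounded by $r_{j,i}^{\theta}\int_{U_{j,i}}|\phi-a|^p\,d\mathcal H_\theta\lfloor_E$ with $r_{j,i}\le 2^{-j}$.

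Finally I sum by levels: for fixed $j$ the family $\{U_{j,i}\}_i$ has bounded overlap on $E$ by Lemma~\ref{Lm.whit_prop}(1), so the level-$j$ contribution is at most $C2^{-j\theta}\int_{B_{cr}(x)}|\phi-a|^p\,d\mathcal H_\theta\lfloor_E$. Only levels with $2^{-j-1}<r_{j,i}<r/6$ occur, so $j\ge j_0$ with $2^{-j_0}\approx r$, and the geometric series $\sum_{j\ge j_0}2^{-j\theta}\lesssim r^\theta$ (here $\theta>0$ is essential) yields \eqref{eq.ext_lip_ball}. The main point requiring care is the first step: making precise ``$r$ sufficiently small'' so that only levels $j\ge k$ meet $B_r(x)$ and the partition sums to one there. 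This follows from the same inequality $r_{j,i}<r/6$, which forces $j\ge k$ once $r\le 3\cdot2^{-k}$, together with $\operatorname{dist}(y,E)<r<1$ for $y\in B_r(x)$ (the partition of unity property). The constants are independent of $\phi$; only the smallness threshold depends on $k(\phi)$.
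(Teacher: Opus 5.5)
Your proof is correct. It follows the same skeleton as the paper's proof:
\begin{itemize}
\item restrict to $r$ so small that only levels $j\ge k(\phi)$ meet $B_r(x)$, so the partition of unity sums to one on $B_r(x)\setminus E$;
\item apply Jensen to reach $\sum_{(j,i)\in\mathcal A_r(x)}\mu(2B_{j,i})\fint_{U_{j,i}}|\phi-a|^p\,d\mathcal H_\theta\lfloor_E$;
\item use doubling and lower regularity to get $\mu(2B_{j,i})\lesssim r_{j,i}^\theta\,\mathcal H_\theta\lfloor_E(U_{j,i})$, together with $U_{j,i}\subset B_{cr}(x)$.
\end{itemize}

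The one real difference is the final summation, and there your version is the right one. The paper bounds $r_{j,i}^\theta\lesssim r^\theta$ uniformly and then invokes bounded overlap of the whole family $\{U_{j,i}\}_{j,i}$. That overlap is only bounded within a fixed level, which is Lemma~\ref{Lm.whit_prop}(1). For example, let $E$ be a hyperplane in $\mathbb R^n$ and $\xi\in E$. Every point $\xi+he_n$ with $0<h<r$ lies in some Whitney ball $B_{l,m}$ with $r_{l,m}\approx h$. Projection is $1$-Lipschitz, so $\xi\in U_{l,m}$. Hence $\xi$ lies in sets $U_{j,i}$ with $(j,i)\in\mathcal A_r(\xi)$ from infinitely many levels. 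Once the factor $r_{j,i}^\theta$ is replaced by $r^\theta$, the sum $\sum_{(j,i)\in\mathcal A_r(x)}\int_{U_{j,i}}|\phi-a|^p\,d\mathcal H_\theta\lfloor_E$ is therefore in general infinite.

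Your version avoids this by doing three things:
\begin{itemize}
\item keeping $r_{j,i}^\theta\le 2^{-j\theta}$;
\item bounding each level with the per-level overlap estimate;
\item summing the geometric series over the levels with $2^{-j}<r/3$, which is exactly where $\theta>0$ enters.
\end{itemize}
This is what actually makes the estimate valid. It is also the same device the paper uses correctly in Step~1 of Lemma~\ref{Lm.lip_ext_norm}. Your explicit bookkeeping is also correct: $r_{j,i}<r/6$, $c=3$, and the smallness threshold $r<\min\{1,3\cdot 2^{-k}\}$, with constants independent of $\phi$.
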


\begin{proof}
Since \(\mu(E)=0\), it is enough to estimate the integral over \(B_r(x)\setminus E\).
We choose \(r>0\) so small that all Whitney balls meeting \(B_r(x)\) have level
\(j\ge k(\phi)\). Then, on \(B_r(x)\setminus E\), the corresponding partition of
unity satisfies
\begin{equation}
    \sum_{j=k(\phi)}^\infty\sum_{i\in\mathcal I_j}\psi_{j,i}(y)=1.
\end{equation}
Hence, for every \(y\in B_r(x)\setminus E\),
\begin{equation}
\begin{aligned}
    f(y)-a
    &=
    \sum_{j=k(\phi)}^\infty\sum_{i\in\mathcal I_j}
    \phi_{j,i}\psi_{j,i}(y)
    -
    a\sum_{j=k(\phi)}^\infty\sum_{i\in\mathcal I_j}\psi_{j,i}(y) \\
    &=
    \sum_{j=k(\phi)}^\infty\sum_{i\in\mathcal I_j}
    (\phi_{j,i}-a)\psi_{j,i}(y).
\end{aligned}
\end{equation}
Moreover,
\begin{equation}
    \phi_{j,i}-a
    =
    \fint\limits_{U_{j,i}}
    (\phi(y)-a)\,d\mathcal H_\theta\lfloor_E(y).
\end{equation}
\par
Let \(\mathcal A_r(x)\) denote the family of indices \((j,i)\) such that
 $2B_{j,i}\cap B_r(x)\neq\emptyset$.
By Jensen's inequality and the bounded overlap of the family \(\{2B_{j,i}\}_{j,i}\),
we obtain
\begin{equation}
\begin{aligned}
    \int\limits_{B_r(x)}
    |f(y)-a|^p\,d\mu(y)
    &\lesssim
    \sum_{(j,i)\in\mathcal A_r(x)}
    |\phi_{j,i}-a|^p\,\mu(2B_{j,i})                              \\
    &\le
    \sum_{(j,i)\in\mathcal A_r(x)}
    \mu(2B_{j,i})
    \fint\limits_{U_{j,i}}
    |\phi(y)-a|^p\,d\mathcal H_\theta\lfloor_E(y).
\end{aligned}
\end{equation}
By the local doubling property of \(\mu\) and the lower codimension-\(\theta\)
regularity of \(\mathcal H_\theta\lfloor_E\),
\begin{equation}
    \mu(2B_{j,i})
    \lesssim
    r_{j,i}^{\theta}(\mathcal H_\theta\lfloor_E)(U_{j,i}).
\end{equation}
If \((j,i)\in\mathcal A_r(x)\), then the Whitney construction implies
\(r_{j,i}\lesssim r\) and $U_{j,i}\subset B_{cr}(x)$
with a structural constant \(c>1\). Therefore
\begin{equation}
\begin{aligned}
    \int\limits_{B_r(x)}
    |f(y)-a|^p\,d\mu(y)
    &\lesssim
    r^\theta
    \sum_{(j,i)\in\mathcal A_r(x)}
    \int\limits_{U_{j,i}}
    |\phi(y)-a|^p\,d\mathcal H_\theta\lfloor_E(y) \\
    &\lesssim
    r^\theta
    \int\limits_{B_{cr}(x)}
    |\phi(y)-a|^p\,d\mathcal H_\theta\lfloor_E(y),
\end{aligned}
\end{equation}
where in the last step we used the bounded overlap of the family
\(\{U_{j,i}\}_{j,i}\). This proves \eqref{eq.ext_lip_ball}.
\end{proof}

\begin{Lm}
\label{Lm.lip_leb_point}
Let \(\phi\in\operatorname{LIP}(E)\cap L_p(E,\mathcal H_\theta\lfloor_E)\) have bounded support, and let \(f:=\operatorname{Ext}\phi\) be defined by \eqref{eq.ext_lip_def}. Then
\begin{equation}
    \lim_{r\to0}
    \fint\limits_{B_r(x)}
    |f(y)-\phi(x)|^p\,d\mu(y)
    =
    0
\end{equation}
for every \(x\in E\). In particular, $\operatorname{Tr}(\operatorname{Ext}\phi) = \phi$.
\end{Lm}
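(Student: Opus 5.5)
The plan is to apply Lemma~\ref{Lm.ext_lip_ball} with the constant $a=\phi(x)$ and then use the Lipschitz continuity of $\phi$ together with the upper codimension-$\theta$ regularity of $\mathcal H_\theta\lfloor_E$.

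Fix $x\in E$ and take $r>0$ small enough that Lemma~\ref{Lm.ext_lip_ball} applies. This gives
\begin{equation*}
    \fint\limits_{B_r(x)}|f(y)-\phi(x)|^p\,d\mu(y)
    \le
    C\frac{r^\theta}{\mu(B_r(x))}
    \int\limits_{B_{cr}(x)}|\phi(y)-\phi(x)|^p\,d\mathcal H_\theta\lfloor_E(y).
\end{equation*}
For $y\in E\cap B_{cr}(x)$ we have $|\phi(y)-\phi(x)|\le L(\phi)\,cr$. Hence the integral is at most $(cL(\phi)r)^p\,\mathcal H_\theta(B_{cr}(x)\cap E)$.

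For $cr\le1$, the upper estimate in \eqref{eq.coreg} together with the local doubling of $\mu$ (applied finitely many times, since $c$ is a fixed structural constant) gives
\begin{equation*}
    \mathcal H_\theta(B_{cr}(x)\cap E)\lesssim \frac{\mu(B_{cr}(x))}{(cr)^\theta}\lesssim \frac{\mu(B_r(x))}{r^\theta}.
\end{equation*}
Substituting this, the factors $r^\theta$ and $\mu(B_r(x))$ cancel, and the mean oscillation is bounded by $C\,L(\phi)^p r^p$. This tends to $0$ as $r\to0$, for every $x\in E$.

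For the trace statement, Hölder's inequality shows that the $L_1$-averages $\fint_{B_r(x)}|f-\phi(x)|\,d\mu$ also tend to zero. So $\phi$ is a trace of $f$ in the sense of Definition~\ref{Def.tr}, at every point of $E$ and in particular $\mathcal H_\theta\lfloor_E$-a.e. If we want to identify this with the trace from Theorem~\ref{Th.tr_ex}, note the following. Since $\phi$ is bounded with bounded support, $f$ is Lipschitz and lies in $L_p$, and the trace defined through dyadic averages converges to $\phi(x)$. Therefore $\operatorname{Tr}(\operatorname{Ext}\phi)=\phi$.

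The only delicate point is the one already absorbed into Lemma~\ref{Lm.ext_lip_ball}: choosing $r$ small enough, depending on $k(\phi)$, that only Whitney levels $j\ge k(\phi)$ meet $B_r(x)$, so that the partition of unity sums to one there. Beyond that, the argument needs only the upper regularity (this is where the upper half of \eqref{eq.coreg} enters, as announced at the start of the section) and doubling to compare $\mu(B_{cr}(x))$ with $\mu(B_r(x))$.

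Alternatively, one can bypass Lemma~\ref{Lm.ext_lip_ball}. Lemma~\ref{Lm.lip_representative_ext} shows $f\in\operatorname{LIP}(X)$ with $f=\phi$ on $E$, so $|f(y)-\phi(x)|\le L(f)\,d(x,y)$. This yields the same conclusion immediately, and I would mention it as a remark.
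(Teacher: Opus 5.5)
Your proof is correct and follows the paper's argument: apply Lemma~\ref{Lm.ext_lip_ball} with $a=\phi(x)$, then use the upper codimension-$\theta$ regularity of $\mathcal H_\theta\lfloor_E$, local doubling of $\mu$, and the Lipschitz bound for $\phi$ to get $\fint_{B_r(x)}|f-\phi(x)|^p\,d\mu\lesssim L(\phi)^p r^p$ (you apply the Lipschitz bound before the regularity estimate rather than after, which changes nothing). Your closing alternative is also valid and more direct: Lemma~\ref{Lm.lip_representative_ext} gives $f\in\operatorname{LIP}(X)$ with $f=\phi$ on $E$, so the averages are bounded by $L(f)^p r^p$ immediately.
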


\begin{proof}
For \(r>0\) sufficiently small, say \(r<c2^{-k(\phi)}\), all Whitney balls meeting \(B_r(x)\) have level \(j\ge k(\phi)\). Therefore
\begin{equation}
    \sum_{j=k(\phi)}^\infty\sum_{i\in\mathcal{I}_j}\psi_{j,i}(y)=1
\end{equation}
for every \(y\in B_r(x)\setminus E\). Since \(\mu(E)=0\) by Lemma~\ref{Lm.AD_zero_mu}, this is sufficient for the integral estimates below.
Consequently,
\begin{equation}
    |f(y)-\phi(x)|
    \le
    \sum_{j=k(\phi)}^\infty
    \sum_{i\in\mathcal{I}_j}
    |\phi_{j,i}-\phi(x)|\psi_{j,i}(y).
\end{equation}
Applying Lemma~\ref{Lm.ext_lip_ball} with \(a=\phi(x)\), we get
\begin{equation}
    \int\limits_{B_r(x)}
    |f(y)-\phi(x)|^p\,d\mu(y)
    \lesssim
    r^\theta
    \int\limits_{B_{cr}(x)}
    |\phi(y)-\phi(x)|^p\,d\mathcal H_\theta\lfloor_E(y).
\end{equation}
Dividing by $\mu(B_r(x))$, using the upper codimension-$\theta$ regularity of
$\mathcal H_\theta\lfloor_E$ together with the local doubling property of $\mu$,
and then the Lipschitz continuity of $\phi$, we obtain
\begin{equation}
    \fint\limits_{B_r(x)}
    |f(y)-\phi(x)|^p\,d\mu(y)
    \lesssim
    \fint\limits_{B_{cr}(x)}
    |\phi(y)-\phi(x)|^p\,d\mathcal H_\theta\lfloor_E(y) \le L(\phi)^pr^p
    \to0
\end{equation}
as \(r\to0\). This proves the claim.
\end{proof}
\begin{Th}
\label{Th.tr_r}
There exists a bounded nonlinear extension operator
\begin{equation}
    \operatorname{Ext}:
    L_p(E,\mathcal H_\theta\lfloor_E)
    \rightarrow
    B^{\theta/p}_{p,1}(X),
\end{equation}
which is a right inverse to the trace operator.
\end{Th}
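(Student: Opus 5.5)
The plan is the standard Gagliardo/Burenkov--Gol'dman telescoping argument. We decompose a general $\phi\in L_p(E,\mathcal H_\theta\lfloor_E)$ into a rapidly convergent series of Lipschitz functions with bounded support. We then apply the operator already constructed for Lipschitz data (Lemmas~\ref{Lm.lip_ext_norm} and~\ref{Lm.lip_leb_point}) to each piece and sum.

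\emph{Step 1: decomposition.} Set $\nu:=\mathcal H_\theta\lfloor_E$ and $\|\cdot\|:=\|\cdot\|_{L_p(E,\nu)}$. The measure $\nu$ is locally finite and locally doubling on the proper space $E$ (Remark following Lemma~\ref{Lm.AD_zero_mu}). Hence Lipschitz functions with bounded support are dense in $L_p(E,\nu)$. For $\phi\ne0$ we choose such functions $\phi_n$ with $\|\phi-\phi_n\|\le 2^{-n}\|\phi\|$, $n\ge1$, and put $\phi_0:=0$. The differences $\eta_n:=\phi_n-\phi_{n-1}$ are again Lipschitz with bounded support. They satisfy $\|\eta_1\|\le 2\|\phi\|$ and $\|\eta_n\|\le 3\cdot 2^{-n}\|\phi\|$, so $\sum_n\|\eta_n\|\lesssim\|\phi\|$ and $\phi=\sum_n\eta_n$ in $L_p(E,\nu)$. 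The sequence $(\phi_n)$ is fixed by some rule depending on $\phi$; this choice, together with the $\phi$-dependent levels $k(\eta_n)$, is where the nonlinearity enters.

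\emph{Step 2: definition and boundedness.} We define $\operatorname{Ext}\phi:=\sum_{n\ge1}\operatorname{Ext}\eta_n$, where each $\operatorname{Ext}\eta_n$ is given by \eqref{eq.ext_lip_def}. By Lemma~\ref{Lm.lip_ext_norm},
\begin{equation*}
\sum_n\|\operatorname{Ext}\eta_n\|_{B^{\theta/p}_{p,1}(X)}\le C\sum_n\|\eta_n\|\lesssim\|\phi\|.
\end{equation*}
Since $B^{\theta/p}_{p,1}(X)$ is a Banach space, the series converges absolutely and $\|\operatorname{Ext}\phi\|_{B^{\theta/p}_{p,1}(X)}\lesssim\|\phi\|$.

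\emph{Step 3: the trace identity, which I expect to be the main obstacle.} Convergence in $B^{\theta/p}_{p,1}$ does not by itself give pointwise traces. I would use continuity of the trace operator instead. Theorem~\ref{Th.dir} gives that $\operatorname{Tr}:B^{\theta/p}_{p,1}(X)\to L_p(E,\nu)$ is well defined and bounded, since $\nu$ is upper codimension-$\theta$ regular by Ahlfors--David regularity. The remaining point is that $\operatorname{Tr}$ is linear: if $\phi_f,\phi_g$ satisfy \eqref{eq.tr_def} for $f,g$, then $\phi_f+\phi_g$ satisfies it for $f+g$ by the triangle inequality, and traces are unique $\nu$-a.e. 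Hence, with $F_N:=\sum_{n\le N}\operatorname{Ext}\eta_n$, Lemma~\ref{Lm.lip_leb_point} gives $\operatorname{Tr}F_N=\sum_{n\le N}\eta_n=\phi_N$. Boundedness then yields
\begin{equation*}
\|\operatorname{Tr}\operatorname{Ext}\phi-\phi_N\|\lesssim\|\operatorname{Ext}\phi-F_N\|_{B^{\theta/p}_{p,1}(X)}\to0,
\end{equation*}
while $\phi_N\to\phi$. Therefore $\operatorname{Tr}\operatorname{Ext}\phi=\phi$ $\nu$-a.e. One must also check that $\operatorname{Ext}$ is well defined on equivalence classes; this holds because the construction only uses the class of $\phi$ through the chosen approximants. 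It must also be checked that the trace furnished by Theorem~\ref{Th.tr_ex} is the one used here, which is exactly the linearity and uniqueness remark above.
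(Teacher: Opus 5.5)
Your proposal is correct and follows the paper's proof essentially step for step. It uses the same telescoping decomposition of $\phi$ into boundedly supported Lipschitz pieces, boundedness of the summed extension via Lemma~\ref{Lm.lip_ext_norm}, and the trace identity via linearity of $\operatorname{Tr}$, Lemma~\ref{Lm.lip_leb_point} on the partial sums, and continuity of $\operatorname{Tr}$ from Theorem~\ref{Th.dir}. The paper additionally takes the levels $k_l$ strictly increasing and subject to an extra smallness condition, but the argument never uses these, so you lose nothing by omitting them.
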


\begin{proof}
Let \(\phi\in L_p(E,\mathcal H_\theta\lfloor_E)\). If \(\phi=0\) in
\(L_p(E,\mathcal H_\theta\lfloor_E)\), we set \(\operatorname{Ext}\phi=0\).
Assume now that \(\phi\ne0\). Since \((E,d,\mathcal H_\theta\lfloor_E)\) is proper and \(\mathcal H_\theta\lfloor_E\) is locally finite and Borel regular, boundedly supported Lipschitz functions are dense in \(L_p(E,\mathcal H_\theta\lfloor_E)\). Hence we can choose a sequence \(\{\phi_l\}_{l=1}^{\infty}
    \subset
    \operatorname{LIP}(E)\cap L_p(E,\mathcal H_\theta\lfloor_E)\)
of boundedly supported functions such that, with \(\phi_0:=0\) and $u_l:=\phi_l-\phi_{l-1}$,
one has
\begin{equation}
\label{eq.approx_sequence_explicit}
    \|u_l\|_{L_p(E,\mathcal H_\theta\lfloor_E)}
    \le
    C2^{-l}
    \|\phi\|_{L_p(E,\mathcal H_\theta\lfloor_E)},
    \qquad l\in\mathbb N,
\end{equation}
and therefore
\begin{equation}
\label{eq.tr_r_g1}
    \phi
    =
    \sum_{l=1}^{\infty}u_l
\end{equation}
both in \(L_p(E,\mathcal H_\theta\lfloor_E)\) and
\(\mathcal H_\theta\lfloor_E\)-almost everywhere.

Indeed, the convergence in \(L_p\) follows from the construction, and
\begin{equation}
    \sum_{l=1}^{\infty}
    \|u_l\|_{L_p(E,\mathcal H_\theta\lfloor_E)}
    \lesssim
    \|\phi\|_{L_p(E,\mathcal H_\theta\lfloor_E)}.
\end{equation}
Therefore
\begin{equation}
    \sum_{l=1}^{\infty}|u_l|
    \in
    L_p(E,\mathcal H_\theta\lfloor_E),
\end{equation}
and hence the series in \eqref{eq.tr_r_g1} converges absolutely
\((\mathcal H_\theta\lfloor_E)\)-almost everywhere.
\par
\emph{Step 1: construction of the extension.}
For every \(l\in\mathbb N\), apply the previous construction to \(u_l\). Thus we choose
\(k_l=k(u_l)\in\mathbb N\) so that \eqref{eq.k_phi_choice} holds with \(u_l\) in place of
\(\phi\). Increasing \(k_l\), if necessary, we also assume that
\begin{equation}
\label{eq.kl_extra_condition_explicit}
    2^{-k_l\frac{\theta}{p}}L(u_l)
    \le
    2^{-l}
    \|\phi\|_{L_p(E,\mathcal H_\theta\lfloor_E)},
    \qquad l\in\mathbb N,
\end{equation}
and that the sequence \(\{k_l\}_{l=1}^{\infty}\) is strictly increasing.

We define
\begin{equation}
\label{eq.ext_general_def}
    \operatorname{Ext}\phi
    :=
    \sum_{l=1}^{\infty}
    \operatorname{Ext}u_l.
\end{equation}
\par
\emph{Step 2: boundedness.}
By Lemma~\ref{Lm.lip_ext_norm} and \eqref{eq.approx_sequence_explicit},
\begin{equation}
\begin{split}
    \sum_{l=1}^{\infty}
    \|\operatorname{Ext}u_l\|_{B^{\theta/p}_{p,1}(X)}
    \lesssim
    \sum_{l=1}^{\infty}
    \|u_l\|_{L_p(E,\mathcal H_\theta\lfloor_E)}
    \lesssim
    \|\phi\|_{L_p(E,\mathcal H_\theta\lfloor_E)}.
\end{split}
\end{equation}
Hence the series in \eqref{eq.ext_general_def} converges in
\(B^{\theta/p}_{p,1}(X)\), and
\begin{equation}
\label{eq.ext_general_bound}
    \|\operatorname{Ext}\phi\|_{B^{\theta/p}_{p,1}(X)}
    \lesssim
    \|\phi\|_{L_p(E,\mathcal H_\theta\lfloor_E)}.
\end{equation}
\par
\emph{Step 3: verification of the trace identity.}
It remains to prove that $\operatorname{Tr}(\operatorname{Ext}\phi)=\phi$
in \(L_p(E,\mathcal H_\theta\lfloor_E)\).
For \(N\in\mathbb N\), put
\begin{equation}
\label{eq.partial_ext_sum}
    F_N
    :=
    \sum_{l=1}^{N}
    \operatorname{Ext}u_l.
\end{equation}
Since each \(u_l\) is Lipschitz with bounded support, Lemma~\ref{Lm.lip_leb_point}
implies that $\operatorname{Tr}(\operatorname{Ext}u_l)=u_l$
in \(L_p(E,\mathcal H_\theta\lfloor_E)\). Hence, by the linearity of the trace
operator,
\begin{equation}
\label{eq.trace_partial_sum}
    \operatorname{Tr}F_N
    =
    \sum_{l=1}^{N}u_l
    =
    \phi_N
\end{equation}
in \(L_p(E,\mathcal H_\theta\lfloor_E)\).

By Step~2, the series defining \(\operatorname{Ext}\phi\) converges in
\(B^{\theta/p}_{p,1}(X)\). Therefore
\begin{equation}
    F_N\to \operatorname{Ext}\phi
    \qquad\text{in }B^{\theta/p}_{p,1}(X).
\end{equation}
Since \(E\) is Ahlfors--David codimension-\(\theta\) regular, the measure
\(\mathcal H_\theta\lfloor_E\) is upper codimension-\(\theta\) regular. Hence, by
Theorem~\ref{Th.dir}, the trace operator
\begin{equation}
    \operatorname{Tr}:B^{\theta/p}_{p,1}(X)
    \to
    L_p(E,\mathcal H_\theta\lfloor_E)
\end{equation}
is bounded. Consequently,
\begin{equation}
\label{eq.trace_partial_convergence}
    \operatorname{Tr}F_N
    \to
    \operatorname{Tr}(\operatorname{Ext}\phi)
    \qquad\text{in }L_p(E,\mathcal H_\theta\lfloor_E).
\end{equation}
On the other hand, by the construction of the approximating sequence,
\begin{equation}
    \phi_N=\sum_{l=1}^{N}u_l\to \phi
    \qquad\text{in }L_p(E,\mathcal H_\theta\lfloor_E).
\end{equation}
Combining this with \eqref{eq.trace_partial_sum} and
\eqref{eq.trace_partial_convergence}, we obtain $\operatorname{Tr}(\operatorname{Ext}\phi)=\phi$
in \(L_p(E,\mathcal H_\theta\lfloor_E)\). This proves that
\(\operatorname{Ext}\) is a right inverse to the trace operator.
\end{proof}
\section{Traces of Besov spaces on $K$-regular trees}
In this section, we discuss the trace problem on \(K\)-regular trees. The main
boundedness and extension results follow from the trace theorems obtained above in the
general setting of metric measure spaces. At the same time, the tree structure allows one
to formulate several additional properties and criteria in more explicit terms.
\subsection{Preliminaries}
We briefly recall the terminology related to \(K\)-regular trees. For background and
related results, we refer the reader to
\cite{shanm_tr_s, Ext_tr_h_f, tr_den_tr, dyad_norm} and the references therein.

A graph \(G\) is a pair \((V,\mathcal E)\), where \(V\) is the set of vertices and
\(\mathcal E\) is the set of edges. Two vertices \(x,y\in V\) are called neighbors if
they are connected by an edge. We turn \(G\) into a metric graph by identifying each edge
with an isometric copy of the unit interval. We denote the corresponding one-dimensional
length measure by \(\ell_G\).

A tree is a connected graph without cycles. A rooted tree is a tree with a distinguished
vertex, called the root, which we denote by \(0\). If \(x\in V\setminus\{0\}\), then the
unique neighbor of \(x\) lying closer to the root is called the parent of \(x\), the
remaining neighbors are called the children of \(x\). For the root, all neighbors are
called children.

A \(K\)-regular tree, $K\ge 1$, is a rooted tree in which every vertex has exactly \(K\) children.
Throughout this section, we denote by \(X\) the metric graph associated
with a fixed \(K\)-regular tree. Slightly abusing notation, we shall also call \(X\) a
\(K\)-regular tree.

For \(x,y\in X\), let \([x,y]\) denote the unique geodesic segment joining \(x\) and
\(y\). We write $d_G(x,y):=\ell_G([x,y])$
for the standard graph distance, and we also use the notation $|x-y|:=d_G(x,y)$.
In particular, $|x|:=d_G(0,x)$
denotes the graph distance from the root to \(x\).

An infinite geodesic ray in \(X\) is an isometric embedding $\gamma:[0,\infty)\to X$.
If \(\gamma(0)=0\), we say that \(\gamma\) is an infinite geodesic ray starting from the
root.

\par
Assume that \(w,\lambda\in L_1^{\operatorname{loc}}([0,\infty))\) are strictly
positive functions. We equip \(X\) with the measure
\begin{equation}
\label{eq.m_d}
    d\mu(x)=w(|x|)\,d\ell_G(x),
\end{equation}
where \(\ell_G\) denotes the one-dimensional length measure on the metric graph.
We also equip \(X\) with the path metric \(d_\lambda\) defined by
\begin{equation}
\label{eq.d_lambda_def}
    d_\lambda(x,y)
    =
    \int\limits_{[x,y]}\lambda(|z|)\,d\ell_G(z),
    \qquad x,y\in X.
\end{equation}

Let $j(t)=[t]+1$, $t\ge0$.
Then \(K^{j(t)}\) is the number of edges intersecting the level set
\(\{x\in X: |x|=t\}\), for a.e. \(t>0\). Hence the total measure of \(X\) is given by
\begin{equation}
\label{eq.total_measure_tree}
    \mu(X)
    =
    \int\limits_0^\infty w(t)K^{j(t)}\,dt.
\end{equation}
Throughout this section we assume that \(X\) has finite \(\mu\)-measure and finite
\(d_\lambda\)-diameter. Equivalently, we assume that
\begin{equation}
\label{eq.finit}
    \int\limits_0^\infty w(t)K^{j(t)}\,dt<\infty,
    \qquad
    \int\limits_0^\infty \lambda(t)\,dt<\infty.
\end{equation}

Let \(\overline X\) be the completion of \((X,d_\lambda)\). Since
\(\int\limits_0^\infty\lambda(t)\,dt<\infty\), each infinite geodesic ray starting from the
root defines a point of \(\overline X\). We set $\partial X:=\overline X\setminus X$.
Then \(\partial X\) can be identified with the family of infinite geodesic rays starting
from the root.

We do not regard \(X\) itself as a metric measure space in the sense of
Section~2, since \((X,d_\lambda)\) is not complete. Whenever we use the general trace
theorems from the previous sections, the ambient space will be the completion
\((\overline X,d_\lambda)\), and the measure \(\mu\) will be understood as extended to
\(\overline X\) by \(\mu(\partial X)=0\).

Unless explicitly stated otherwise, doubling assumptions or assumptions of the weak Poincar\'e inequality in this section refer to the
metric graph \(X\) equipped with the metric \(d_\lambda\) and the measure $\mu$. The required properties
for balls centered at boundary points will be derived below from the doubling property on
\(X\). Furthermore, since we assume that $\operatorname{diam}(X)<\infty$, the local doubling property and the local weak Poincar\'e inequality are equivalent to their global counterparts. Therefore, we do not distinguish those properties throughout this section.

\begin{Example}[\cite{shanm_tr_s}] 
\label{Ex.st_m_d}
Let
\begin{equation}
\lambda(t)=e^{-\varepsilon t}, \qquad w(t)=e^{-\beta t},
\end{equation} 
where \(\varepsilon>0\) and \(\beta>\log K\). Then \(\mu(X)<\infty\) and \(\operatorname{diam}_\lambda(X)<\infty\). Moreover, the measure \(\mu\) is doubling, and \(( X,d_\lambda,\mu)\) supports a weak \((1,1)\)-Poincar\'e inequality. Consequently, it supports a weak \((1,p)\)-Poincar\'e inequality for every \(p\in[1,\infty)\).
\end{Example}
\par
For distinct \(\xi,\eta\in\partial X\), let \(x_{(\xi,\eta)}\) denote the last common
point of the geodesic rays \([0,\xi)\) and \([0,\eta)\). The restriction of the metric
\(d_\lambda\) to \(\partial X\) is given by
\begin{equation}
\label{eq.boundary_metric}
    d_b(\xi,\eta)
    =
    2\int\limits_{|x_{(\xi,\eta)}|}^{\infty}\lambda(t)\,dt,
    \qquad \xi\ne\eta,
\end{equation}
and we set \(d_b(\xi,\xi)=0\).
\par
For \(x\in X\), we denote by \(\Gamma_x\) the subtree rooted at \(x\), that is,
\begin{equation}
\label{eq.subtree_def}
    \Gamma_x
    :=
    \{y\in X: x\in[0,y]\}.
\end{equation}
We also write
\begin{equation}
    \partial\Gamma_x
    :=
    \{\xi\in\partial X: x\in[0,\xi)\}.
\end{equation}
For \(n\in\mathbb N_0\), let $V^n:=\{v\in V: |v|=n\}$
be the set of vertices at level \(n\). If \(v\in V^n\), set
\begin{equation}
\label{eq.cylinder_def}
    I_v
    :=
    \{\xi\in\partial X: v\in[0,\xi)\}.
\end{equation}
The natural probability measure \(\nu\) on \(\partial X\) is determined by
\begin{equation}
\label{eq.boundary_measure_cylinders}
    \nu(I_v)=K^{-n},
    \qquad v\in V^n.
\end{equation}
For \(K=1\), the boundary consists of a single point and the above formula simply gives
\(\nu(\partial X)=1\).

For further properties of \((\partial X,d_b)\), see \cite{dif_tr_op}. In particular,
\((\partial X,d_b)\) is an ultrametric space, and the Lebesgue differentiation theorem
holds for the metric measure space \((\partial X,d_b,\nu)\).

The following disintegration formula will be used below.

\begin{Lm}[\cite{dif_tr_op}, Lemma 3.2]
\label{Lm.subtr_comp}
Let \(X\) be a \(K\)-regular tree, and let
\(w,\lambda:[0,\infty)\to(0,\infty)\) be locally integrable. Let
\(p\in[1,\infty)\), and let \(f\in L_p(X)\). Assume that \(\mu(X)<\infty\).
Then, for every \(x\in X\),
\begin{equation}
\label{eq.subtree_disintegration}
    \int\limits_{\Gamma_x}|f(y)|^p\,d\mu(y)
    =
    \int\limits_{\partial\Gamma_x}
    \int\limits_{[x,\xi)}
    |f(y)|^p K^{j(|y|)}w(|y|)\,d\ell_G(y)\,d\nu(\xi).
\end{equation}
\end{Lm}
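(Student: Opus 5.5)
The disintegration formula will be obtained by first proving it for indicator functions of small pieces of edges and then extending to general $|f|^p$ by the monotone class theorem. Since both sides of \eqref{eq.subtree_disintegration} are integrals of the nonnegative measurable function $g:=|f|^p$ against positive measures, it suffices to show that the measure
\begin{equation*}
    \sigma(A):=\int\limits_{\partial\Gamma_x}\int\limits_{[x,\xi)}\chi_A(y)K^{j(|y|)}w(|y|)\,d\ell_G(y)\,d\nu(\xi)
\end{equation*}
coincides with $\mu\lfloor_{\Gamma_x}$ on Borel sets $A\subset\Gamma_x$. Monotone convergence then passes from indicators to simple functions, and from simple functions to arbitrary nonnegative $g$. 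The hypothesis $\mu(X)<\infty$ only guarantees that both sides are finite when $f\in L_p(X)$; the identity itself holds for all nonnegative measurable $g$.

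To identify $\sigma$ with $\mu\lfloor_{\Gamma_x}$, I would work with the generating $\pi$-system of sets of the form $A=e\cap\{a\le |y|<b\}$. Here $e$ is a single edge (or the part of an edge lying in $\Gamma_x$) joining a vertex $v\in V^{n}$ to one of its children, so that $n\le |y|\le n+1$ on $e$. On $e$ we have $j(|y|)=n+1$ for a.e. point, hence
\begin{equation*}
    \mu(A)=\int_a^b w(t)\,dt.
\end{equation*}
On the other side, a ray $[x,\xi)$ with $\xi\in\partial\Gamma_x$ passes through $e$ exactly when $\xi\in I_{v'}$, where $v'$ is the child endpoint of $e$, which lies in $V^{n+1}$. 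When it does, it traverses $e$ isometrically with respect to $\ell_G$. Using $\nu(I_{v'})=K^{-(n+1)}$ from \eqref{eq.boundary_measure_cylinders}, we get
\begin{equation*}
    \sigma(A)=K^{-(n+1)}\int_a^b K^{n+1}w(t)\,dt=\mu(A).
\end{equation*}
If $x$ lies in the interior of an edge, the partial edge from $x$ is handled in the same way with $I_{v'}=\partial\Gamma_x$. Two finite measures that agree on a generating $\pi$-system agree on all Borel sets, since $\Gamma_x$ is a countable union of edges. For the edge-by-edge argument I would use $\sigma$-finiteness rather than finiteness, summing over the edges.

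The main obstacle is the measurability needed for Fubini/Tonelli. The inner integral must be a $\nu$-measurable function of $\xi$. For the generating sets this is immediate: the inner integral is a constant times $\chi_{I_{v'}}$, and cylinders are open-closed in the ultrametric $d_b$. The Dynkin-class argument then propagates measurability to all Borel $A$. A second point to check is the bookkeeping of $j(t)=[t]+1$ at the integer levels, which form an $\ell_G$-null set, so the weight $K^{j(|y|)}$ exactly cancels $\nu(I_{v'})$ on each edge. The case $K=1$ is trivial, since $\Gamma_x$ is a single ray and $\nu$ is a unit point mass.
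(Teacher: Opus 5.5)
The paper gives no proof of this lemma; it is quoted from \cite{dif_tr_op}, so there is no in-text argument to compare against. Your proof is correct. It consists of the edge computation \(\sigma(A)=K^{-(n+1)}\int_a^bK^{n+1}w(t)\,dt=\mu(A)\), the observation that the fibre integrals are constants times indicators of clopen cylinders, the Dynkin and \(\sigma\)-finite uniqueness step, and monotone convergence. Together these identify \(\mu\lfloor_{\Gamma_x}\) with \(\sigma\) and give the identity for every nonnegative Borel \(g\).

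The \(\pi\)--\(\lambda\) machinery can be avoided, because the fact you use for intervals holds for any integrand. Let \(e\) run over the edges meeting \(\Gamma_x\) in a nondegenerate segment, and let \(v_e\) be the endpoint of \(e\) farther from the root, at level \(n_e+1\). For \(\xi\in\partial\Gamma_x\), the ray \([x,\xi)\) contains all of \(e\cap\Gamma_x\) if \(\xi\in I_{v_e}\), and otherwise meets \(e\) in at most one point. Hence, for any nonnegative Borel \(g\) and every \(\xi\in\partial\Gamma_x\),
\[
\int\limits_{[x,\xi)}g(y)K^{j(|y|)}w(|y|)\,d\ell_G(y)
=
\sum_{e}\chi_{I_{v_e}}(\xi)\,K^{n_e+1}\int\limits_{e\cap\Gamma_x}g(y)\,w(|y|)\,d\ell_G(y).
\]
The right-hand side is a countable sum of constants times indicators of cylinders, so it is automatically measurable in \(\xi\). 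Integrate against \(\nu\), interchange sum and integral by Tonelli, and use \(\nu(I_{v_e})=K^{-(n_e+1)}\). This gives \(\sum_e\int_{e\cap\Gamma_x}g\,d\mu=\int_{\Gamma_x}g\,d\mu\) directly, with no uniqueness-of-measures or monotone-class step. What your formulation buys instead is a more conceptual statement: \(\mu\lfloor_{\Gamma_x}\) is the push-forward of \(\nu\lfloor_{\partial\Gamma_x}\otimes K^{j(t)}w(t)\,dt\) under the map sending \((\xi,t)\) to the point of \([0,\xi)\) at level \(t\).

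Three minor remarks:
\begin{enumerate}
\item The family \(e\cap\{a\le|y|<b\}\) is not literally closed under intersections, since two sibling edges meet in their common parent vertex. Adjoining vertex singletons fixes this, and they are null for both measures.
\item The hypothesis \(\mu(X)<\infty\) is not what makes the two sides finite for \(f\in L_p(X)\); the left side is finite by assumption. The hypothesis is simply not needed for the identity.
\item You should fix a Borel representative of \(f\), so that the integrals along individual rays are defined.
\end{enumerate}
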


\begin{Lm}
\label{Lm.completion_admissibility}
Let \(X\) be a weighted \(K\)-regular tree satisfying \eqref{eq.finit}, and extend
\(\mu\) to \(\overline X\) by setting \(\mu(\partial X)=0\).
Assume that \(\mu\) is doubling on \((X,d_\lambda)\). Then the extended measure
\(\mu\) is doubling on \((\overline X,d_\lambda)\).
If, in addition, \((X,d_\lambda,\mu)\) supports a weak \((1,p)\)-Poincar\'e inequality
for some \(p\in[1,\infty)\), then \((\overline X,d_\lambda,\mu)\) supports a
weak \((1,p)\)-Poincar\'e inequality. In particular, $(\overline X,d_\lambda,\mu)\in\mathfrak U_p^{\operatorname{gl}}$.
\end{Lm}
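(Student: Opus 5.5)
The plan is to work on the completion $(\overline X,d_\lambda)$ and to treat $\partial X$ as a set of $\mu$-measure zero. There are two claims: doubling of the extended measure, and the weak Poincar\'e inequality. The main idea for both is to approximate a ball centred at a boundary point by balls centred at interior points. Since $\mu(\partial X)=0$, every $\mu$-integral over a ball of $\overline X$ equals the integral over its intersection with $X$.

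\emph{Doubling.} Fix $\xi\in\partial X$ and $r>0$. Since $\int_0^\infty\lambda<\infty$, the points $x_n\in[0,\xi)$ with $|x_n|=n$ satisfy $d_\lambda(x_n,\xi)=\rho(n)\to0$. For $n$ large we have $d_\lambda(x_n,\xi)<r/4$, and then
\begin{equation*}
B_{r/2}(x_n)\subset B_r(\xi)\subset B_{2r}(\xi)\subset B_{4r}(x_n),
\end{equation*}
where the balls centred at $x_n$ are taken in $\overline X$. The only delicate point is that the doubling hypothesis concerns balls of $X$, and the intersection of an $\overline X$-ball with $X$ is exactly the $X$-ball with the same centre and radius. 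Since $\mu(\partial X)=0$, this gives $\mu(B_{2r}(\xi))\le\mu(B_{4r}(x_n))\le C_\mu^3\mu(B_{r/2}(x_n))\le C_\mu^3\mu(B_r(\xi))$. For balls centred in $X$ the estimate is the hypothesis itself, again because $\mu(\partial X)=0$. Positivity and finiteness of ball measures follow from the same inclusions.

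\emph{Poincar\'e inequality.} I would use two standard facts.
\begin{enumerate}
\item A weak $(1,p)$-Poincar\'e inequality for Lipschitz functions with $\operatorname{lip}$ is equivalent to the one with upper gradients.
\item Every Lipschitz $f$ on $\overline X$ restricts to a Lipschitz function on $X$ whose local Lipschitz constant is at most $\operatorname{lip}f$ at every point of $X$, since $X$ is open in $\overline X$. Indeed, $X$ is open because $d_\lambda(x,\partial X)\ge\rho(|x|)>0$.
\end{enumerate}
For a ball centred in $X$, the inequality on $X$ transfers directly, because both sides are $\mu$-integrals and $\mu(\partial X)=0$. For a boundary centre $\xi$, I would again use $x_n$ with $d_\lambda(x_n,\xi)<r/4$. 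We have $B_r(\xi)\subset B_{2r}(x_n)$, and the mean oscillation over $B_r(\xi)$ is bounded by twice the mean oscillation over $B_{2r}(x_n)$ times the ratio $\mu(B_{2r}(x_n))/\mu(B_r(\xi))$, which is bounded by the doubling just proved. Applying the Poincar\'e inequality on $B_{2r}(x_n)$ gives the enlarged ball $B_{2\lambda r}(x_n)\subset B_{(2\lambda+1)r}(\xi)$. Doubling then converts the average over $B_{2\lambda r}(x_n)$ into an average over $B_{(2\lambda+1)r}(\xi)$. The result is a weak Poincar\'e inequality at $\xi$ with dilation constant $2\lambda+1$.

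The step I expect to need most care is the identification of $\operatorname{lip}f$ on $X$ computed in $d_\lambda$ versus in $\overline X$. Since $X$ is open and dense in $\overline X$, the limsup defining $\operatorname{lip}$ at points of $X$ is the same in both spaces, so this should go through. Finally, since $\operatorname{diam}_\lambda\overline X<\infty$, the global constants give $(\overline X,d_\lambda,\mu)\in\mathfrak U_p^{\operatorname{gl}}$. Completeness and separability of $\overline X$ are automatic, $\overline X$ being the completion of a separable space.
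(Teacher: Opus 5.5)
Your proposal is correct and follows essentially the paper's route: a ball $B_r(\xi)$ with $\xi\in\partial X$ is compared with balls centred at a point of the ray $[0,\xi)$ close to $\xi$. Doubling and the Poincar\'e inequality are then transferred through these inclusions, using $\mu(\partial X)=0$ and the fact that $\operatorname{lip}f$ computed in $X$ and in $\overline X$ agree on the open set $X$. The difference is cosmetic: the paper fixes the point $x_{a/4}$ and passes through the subtree inclusions $\Gamma_{x_{a/2}}\subset B_a^{\overline X}(\xi)\cap X\subset\Gamma_{x_a}$, which it reuses in Lemma~\ref{Lm.tr_equiv}, whereas you use only density of $X$ and the triangle inequality, so your argument works for any completion that adds a $\mu$-null set (and your Fact~1 about upper gradients is never actually needed).
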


\begin{proof}
We first prove the doubling property on the completion. Since
\(\mu(\partial X)=0\), balls centered at points of \(X\) have the same measure whether
they are considered in \(X\) or in \(\overline X\). Thus it remains to consider balls
centered at boundary points.

Fix \(\xi\in\partial X\). For \(a\in (0, \frac{\operatorname{diam}(X)}{4}]\), let
\(x_a=x_a(\xi)\in[0,\xi)\) be the unique point satisfying
\begin{equation}
\label{eq.boundary_point_xa}
    \int\limits_{|x_a|}^{\infty}\lambda(t)\,dt=a.
\end{equation}
Then
\begin{equation}
\label{eq.boundary_ball_subtree_inclusions}
    \Gamma_{x_{a/2}}
    \subset
    B_a^{\overline X}(\xi)\cap X
    \subset
    \Gamma_{x_a}.
\end{equation}
Moreover,
\begin{equation}
\label{eq.subtree_ball_comparison_completion}
    B_{a/4}^{X}(x_{a/4})
    \subset
    \Gamma_{x_{a/2}}
    \subset
    \Gamma_{x_a}
    \subset
    B_{7a/4}^{X}(x_{a/4}).
\end{equation}
By the doubling property of \(\mu\) on \(X\), \eqref{eq.subtree_ball_comparison_completion}
implies
\begin{equation}
\label{eq.subtree_doubling_completion}
    \mu(\Gamma_{x_a})
    \lesssim
    \mu(\Gamma_{x_{a/2}}).
\end{equation}
Applying this estimate twice and using \eqref{eq.boundary_ball_subtree_inclusions}, we get
\begin{equation}
    \mu\bigl(B_{2a}^{\overline X}(\xi)\bigr)
    \le
    \mu(\Gamma_{x_{2a}})
    \lesssim
    \mu(\Gamma_{x_{a/2}})
    \le
    \mu\bigl(B_a^{\overline X}(\xi)\bigr).
\end{equation}
Thus the doubling estimate holds for balls centered at boundary points.

It remains to prove the weak \((1,p)\)-Poincar\'e inequality on \(\overline X\).
Let \(f\in\operatorname{LIP}(\overline X)\). Since \(\mu(\partial X)=0\), all integrals
are taken over \(X\). Moreover, for every \(x\in X\),
\begin{equation}
    \operatorname{lip}_{\overline X}f(x)
    =
    \operatorname{lip}_{X}(f|_X)(x).
\end{equation}
Indeed, every point of \(X\) has positive \(d_\lambda\)-distance from \(\partial X\), and
therefore the local behavior of \(f\) near such a point is completely determined inside
\(X\).
Thus the weak \((1,p)\)-Poincar\'e inequality on \(\overline X\) follows immediately from
the corresponding inequality on \(X\) for balls centered at points of \(X\). It remains only
to consider balls centered at boundary points.

Fix \(\xi\in\partial X\) and \(a>0\). Let \(x_{a/4}=x_{a/4}(\xi)\in[0,\xi)\). By the
geometric inclusions proved above, $B_a^{\overline X}(\xi)\cap X
    \subset
    B_{\frac{7a}{4}}^{X}(x_{a/4})$,
and, by the doubling estimate on the completion already proved,
\begin{equation}
    \mu\bigl(B_{\frac{7a}{4}}^{X}(x_{a/4})\bigr)
    \lesssim
    \mu\bigl(B_a^{\overline X}(\xi)\bigr).
\end{equation}
Hence, using the elementary estimate for nested sets of comparable measure,
\begin{equation}
    \fint\limits_{B_a^{\overline X}(\xi)}
    \left|f(x)-A^{\mu}_af(\xi)d\mu\right|\,d\mu(x)
    \lesssim
    \fint\limits_{B_{\frac{7a}{4}}^{X}(x_{a/4})}
    \left|f(x)-A^{\mu}_{{\frac{7a}{4}}}f(x_{a/4})d\mu\right|\,d\mu(x) .
\end{equation}
Applying the weak \((1,p)\)-Poincar\'e inequality on \(X\) to the ball
\(B_{\frac{7a}{4}}^{X}(x_{a/4})\), and using the inclusion $B_{Ca}^{X}(x_{a/4})
    \subset
    B_{C'a}^{\overline X}(\xi)$, we obtain the desired result.
\end{proof}

For \(r\ge0\), denote by \(X^r\) the tail
\begin{equation}
\label{eq.tail_def}
    X^r
    :=
    \{x\in X: |x|\ge r\}.
\end{equation}
If \(0\le r<r'\), denote by \(X^{r,r'}\) the open strip
\begin{equation}
\label{eq.strip_def}
    X^{r,r'}
    :=
    \{x\in X: r<|x|<r'\}.
\end{equation}
We define the \(d_\lambda\)-width of \(X^{r,r'}\) by
\begin{equation}
\label{eq.strip_width_def}
    \mathcal W(X^{r,r'})
    :=
    \int\limits_r^{r'}\lambda(t)\,dt.
\end{equation}
The center level \(r_c(X^{r,r'})\) is the unique number in \((r,r')\) such that
\begin{equation}
\label{eq.strip_center_def}
    \int\limits_r^{r_c(X^{r,r'})}\lambda(t)\,dt
    =
    \int\limits_{r_c(X^{r,r'})}^{r'}\lambda(t)\,dt.
\end{equation}

Let \(c\in(0,1)\). We denote by \(cX^{r,r'}\) the strip
\(X^{r_c^-,r_c^+}\), where \(r_c^-<r_c(X^{r,r'})<r_c^+\) are chosen so that
\begin{equation}
\label{eq.centered_substrip_def}
    \int\limits_{r_c^-}^{r_c(X^{r,r'})}\lambda(t)\,dt
    =
    \int\limits_{r_c(X^{r,r'})}^{r_c^+}\lambda(t)\,dt
    =
    \frac{c}{2}\mathcal W(X^{r,r'}).
\end{equation}
Thus \(cX^{r,r'}\) is the strip with the same center level as \(X^{r,r'}\) and with
\(d_\lambda\)-width \(c\mathcal W(X^{r,r'})\).

For \(x\in X\setminus\{0\}\), let \(x^{(1)}\) denote the closest vertex to \(x\) lying
strictly closer to the root. If \(x\) is itself a vertex, then \(x^{(1)}\) is its parent.
Inductively, whenever \(x^{(k)}\ne0\), define
\begin{equation}
\label{eq.parent_order_def}
    x^{(k+1)}
    :=
    \bigl(x^{(k)}\bigr)^{(1)}.
\end{equation}
\begin{Lm}
\label{Lm.doubl_strip}
Let \(X\) be a \(K\)-regular tree equipped with the measure \(\mu\) defined by
\eqref{eq.m_d} and the metric \(d_\lambda\) defined by \eqref{eq.d_lambda_def}.
Assume that \(\mu\) is doubling on \(( X,d_\lambda)\). Then there exists
a constant \(C>0\), depending only on the doubling constant of \(\mu\), such that
for every \(0\le r<r'\),
\begin{equation}
\label{eq.doubling_strip}
    \mu(X^{r,r'})
    \le
    C\mu\left(\frac12 X^{r,r'}\right).
\end{equation}
\end{Lm}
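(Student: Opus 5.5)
We reduce the strip estimate to the doubling of $\mu$ on balls. The key observation is that a strip splits into the subtrees rooted at the points of a fixed level, and each piece of a strip lying in one subtree is comparable to a ball whose radius is comparable to the width.

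Let $\rho_c:=r_c(X^{r,r'})$ and $W:=\mathcal W(X^{r,r'})$. Let $\{x_\alpha\}$ be the (finitely many) points with $|x_\alpha|=r$; if $r=0$ this is just the root. Then $X^{r,r'}$ is the disjoint union over $\alpha$ of the pieces
$$S_\alpha:=\{y\in\Gamma_{x_\alpha}: r<|y|<r'\},$$
and $\frac12X^{r,r'}$ is the disjoint union of the corresponding pieces $S'_\alpha\subset S_\alpha$, taken between the levels $r_c^\pm$. It therefore suffices to show $\mu(S_\alpha)\le C\mu(S'_\alpha)$ with $C$ independent of $\alpha$ and of $r,r'$; summing over $\alpha$ then gives \eqref{eq.doubling_strip}.

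Fix $\alpha$ and write $x=x_\alpha$. Every $y\in S_\alpha$ satisfies $d_\lambda(x,y)=\int_r^{|y|}\lambda<W$, so $S_\alpha\subset B_W(x)$. For the lower bound, let $z$ be the point of $[x,\xi)$ at level $\rho_c$, for some ray $\xi$ through $x$, and consider $B_{W/4}(z)$. A point $y$ of this ball is reached from $z$ by going up to the branch point $z'$ and then down. This forces $\int_{|z'|}^{\rho_c}\lambda<W/4$ and $\bigl|\int_{\rho_c}^{|y|}\lambda\bigr|<W/4$, so $r_c^-<|y|<r_c^+$. Moreover, since $\int_r^{\rho_c}\lambda=W/2>W/4$, the point $z'$ lies strictly below level $\rho_c$ but above level $r$, hence $z'\in\Gamma_x$ and $y\in\Gamma_x$. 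Thus $B_{W/4}(z)\subset S'_\alpha$. Finally, $d_\lambda(x,z)=W/2$, so $B_W(x)\subset B_{2W}(z)$. Applying doubling three times gives
$$\mu(S_\alpha)\le\mu(B_{2W}(z))\le C_\mu^3\,\mu(B_{W/4}(z))\le C_\mu^3\,\mu(S'_\alpha).$$

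The main obstacle is the geodesic bookkeeping in the inclusion $B_{W/4}(z)\subset S'_\alpha$. One has to check carefully that a $d_\lambda$-ball of radius $W/4$ about a level-$\rho_c$ point cannot escape $\Gamma_x$ or the middle levels, using that $d_\lambda$ along a geodesic through the branch point splits additively. If the radii turn out to be slightly off, one uses $W/8$ instead; this costs only one more doubling step.

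A secondary point is the case where the subtree decomposition is degenerate, namely when $r$ is a vertex level; then each $x_\alpha$ is a vertex. The argument above still applies verbatim, because $\Gamma_x$ contains all descendants of $x$.
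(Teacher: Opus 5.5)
Your argument is correct, but it takes a somewhat different route from the paper.

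The paper does not split the strip. It picks a maximal $\frac{\tau}{2}$-separated set $\{z_j\}$ on the center level $S_c$, with $\tau=\frac12\mathcal W(X^{r,r'})$. The balls $B_{\tau/4}(z_j)$ are then pairwise disjoint and lie in $\frac12X^{r,r'}$, while the balls $B_{3\tau/2}(z_j)$ cover $X^{r,r'}$. Summing the doubling inequality over $j$ finishes the proof.

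You instead use the tree structure to partition $X^{r,r'}$ exactly into the pieces $S_\alpha=\Gamma_{x_\alpha}\cap X^{r,r'}$, since every point above level $r$ has a unique ancestor at level $r$. You then sandwich each piece between two balls about a single center-level point $z$: $B_{W/4}(z)\subset S'_\alpha$ and $S_\alpha\subset B_W(x_\alpha)\subset B_{2W}(z)$.

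Your verification of $B_{W/4}(z)\subset S'_\alpha$ via the branch point $z'$ is right, so no fallback to $W/8$ is needed. One small wording fix: $z'$ may coincide with $z$, so ``strictly below level $\rho_c$'' should read $r<|z'|\le\rho_c$.

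Both proofs give the constant $C_\mu^3$. Your version avoids the net-and-covering argument and compares piece by piece with one ball each, at the cost of relying on the subtree partition. The paper's version uses only two facts: $d_\lambda$ dominates the radial $d_\lambda$-distance, and every point of the strip lies within $\tau$ of the center level along a ray. It would therefore carry over to strip-like regions that have no canonical partition.
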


\begin{proof}
Put $\tau:=\frac12\mathcal W(X^{r,r'})$.
Thus \(X^{r,r'}\) consists of the points whose radial \(d_\lambda\)-distance from the
center level \(r_c(X^{r,r'})\) is less than \(\tau\), while
\(\frac12X^{r,r'}\) consists of the points whose radial \(d_\lambda\)-distance from
the same center level is less than \(\frac{\tau}{2}\).

Let
\begin{equation}
    S_c:=\{x\in X: |x|=r_c(X^{r,r'})\}
\end{equation}
be the center level of the strip. Choose a maximal \(\frac{\tau}{2}\)-separated subset
\begin{equation}
    Z=\{z_1,\ldots,z_N\}\subset S_c
\end{equation}
with respect to the metric \(d_\lambda\). Then the balls $\left\{B_{\frac{\tau}{4}}(z_j)\right\}_{j=1}^N$
are pairwise disjoint. Moreover, since every \(z_j\) lies on the center level, we have $ B_{\frac{\tau}{4}}(z_j)\subset \frac12X^{r,r'}$ for each $j \in \{1, \ldots, N\}$.
Indeed, if \(y\in B_{\frac{\tau}{4}}(z_j)\), then the radial \(d_\lambda\)-distance from \(y\)
to the center level is at most \(d_\lambda(y,z_j)<\frac{\tau}{4}<\frac{\tau}{2}\).

By the maximality of \(Z\), the balls \(\{B_{\frac{\tau}{2}}(z_j)\}_{j=1}^N\) cover \(S_c\).
Let \(x\in X^{r,r'}\). Choose a point \(x_c\in S_c\) lying on a geodesic ray through
\(x\). Then $d_\lambda(x,x_c)<\tau$.
Since \(S_c\subset\bigcup_{j=1}^N B_{\frac{\tau}{2}}(z_j)\), there exists \(j\) such that
\(d_\lambda(x_c,z_j)<\frac{\tau}{2}\). Hence
\begin{equation}
    d_\lambda(x,z_j)
    \le
    d_\lambda(x,x_c)+d_\lambda(x_c,z_j)
    <
    \frac32\tau.
\end{equation}
Therefore
\begin{equation}
    X^{r,r'}
    \subset
    \bigcup_{j=1}^N B_{\frac32\tau}(z_j).
\end{equation}
Using the doubling property of \(\mu\), we obtain
\begin{equation}
\begin{aligned}
    \mu(X^{r,r'})
    \le
    \sum_{j=1}^N \mu\left(B_{\frac32\tau}(z_j)\right)
    \lesssim
    \sum_{j=1}^N \mu\left(B_{\frac{\tau}{4}}(z_j)\right) 
    \le
    \mu\left(\frac12X^{r,r'}\right).
\end{aligned}
\end{equation}
This proves \eqref{eq.doubling_strip}.
\end{proof}
\subsection{Fractional gradients}

We shall use the following pointwise estimate for Besov functions. In this subsection,
we work in the global setting: \((X,d,\mu)\) is a metric measure space with a globally
doubling measure \(\mu\). Thus the Besov norm defined in Section~2 is equivalent to the norm obtained by replacing the interval \((0,1)\) in \eqref{eq.bes_sem} by
\((0,\infty)\). Indeed, the contribution of large scales \(t\ge1\) is controlled by
\(\|f\|_{L_p(X)}\). Equivalently, one may use the dyadic scales \(2^{-k}\),
\(k\in\mathbb Z\).

For \(j\in\mathbb Z\) and \(f\in L_1^{\operatorname{loc}}(X)\), set
\begin{equation}
\label{eq.omega_j_def}
    \omega_j f(x)
    :=
    \inf_{c\in\mathbb R}
    \fint\limits_{B_{2^{-j}}(x)}
    |f(z)-c|\,d\mu(z).
\end{equation}

We shall use the following consequence of \cite[Lemma~2.3]{AKZ}.

\begin{Lm}
\label{Lm.AKZ_pointwise}
Let \((X,d,\mu)\) be a metric measure space with a globally doubling measure.
Then there exists a constant \(C>0\) such that, for every
\(f\in L_1^{\operatorname{loc}}(X)\), one can find a set \(N\subset X\) with
\(\mu(N)=0\) such that, for every \(k\in\mathbb Z\) and all
\(x,y\in X\setminus N\) satisfying $2^{-k-1}\le d(x,y)<2^{-k}$,
one has
\begin{equation}
\label{eq.AKZ_pointwise}
    |f(x)-f(y)|
    \le
    C\sum_{j=k-2}^{\infty}
    \bigl(\omega_j f(x)+\omega_j f(y)\bigr).
\end{equation}
\end{Lm}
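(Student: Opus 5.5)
The plan is a standard telescoping (chaining) argument over dyadic balls, with the medians or optimal constants replaced by averages. First, observe that for any ball \(B\) the infimum over \(c\) in \eqref{eq.omega_j_def} is comparable to the oscillation around the mean: \(\fint_B|f-A_Bf|\,d\mu\le 2\inf_c\fint_B|f-c|\,d\mu\). Put \(B_j(x):=B_{2^{-j}}(x)\) and \(a_j(x):=A^\mu_{2^{-j}}f(x)\). Let \(N\) be the complement of the set of Lebesgue points of \(f\). By the Lebesgue differentiation theorem for doubling measures, \(\mu(N)=0\), and \(a_j(x)\to f(x)\) for every \(x\notin N\). The set \(N\) depends only on \(f\), not on \(k\), \(x\), or \(y\), as the statement requires.

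Fix \(k\) and \(x,y\notin N\) with \(2^{-k-1}\le d(x,y)<2^{-k}\). We split
\[
|f(x)-f(y)|\le \sum_{j\ge k-2}|a_{j+1}(x)-a_j(x)|+|a_{k-2}(x)-a_{k-2}(y)|+\sum_{j\ge k-2}|a_{j+1}(y)-a_j(y)|.
\]
The consecutive differences are handled as follows. Since \(B_{j+1}(x)\subset B_j(x)\) and \(\mu(B_j(x))\le C\mu(B_{j+1}(x))\) by doubling, we get
\[
|a_{j+1}(x)-a_j(x)|\le \fint_{B_{j+1}(x)}|f-a_j(x)|\,d\mu\le C\fint_{B_j(x)}|f-a_j(x)|\,d\mu\le C\,\omega_jf(x),
\]
and similarly at \(y\).

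For the cross term we use \(d(x,y)<2^{-k}\). Both \(B_{k-2}(x)\) and \(B_{k-2}(y)\) have radius \(2^{-k+2}\), so \(B_{k}(y)\subset B_{k-2}(x)\cap B_{k-2}(y)\). Moreover, by doubling, \(B_k(y)\) has measure comparable to both of these balls. Comparing each of \(a_{k-2}(x)\) and \(a_{k-2}(y)\) with \(\fint_{B_k(y)}f\,d\mu\), exactly as in the previous estimate, gives
\[
|a_{k-2}(x)-a_{k-2}(y)|\le C\bigl(\omega_{k-2}f(x)+\omega_{k-2}f(y)\bigr).
\]
Summing the three pieces gives \eqref{eq.AKZ_pointwise}, with \(C\) depending only on the doubling constant. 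This is also why the sum starts at \(j=k-2\): the margin from the factor \(4\) in the radius guarantees containment with comparable measures.

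The only step that requires some care is the convergence \(a_j(x)\to f(x)\) outside a single null set that works for all \(k\), \(x\), and \(y\) simultaneously. This holds because Lebesgue points are defined independently of the scales used. If the series on the right-hand side diverges, the inequality is trivial. No further obstacle is expected; alternatively, one can cite \cite[Lemma~2.3]{AKZ} directly and specialize it.
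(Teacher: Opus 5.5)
Your argument is correct: the Lebesgue-point telescoping at \(x\) and at \(y\) down to scale \(2^{-k+2}\), combined with comparing \(a_{k-2}(x)\) and \(a_{k-2}(y)\) through the common ball \(B_{2^{-k}}(y)\), gives \eqref{eq.AKZ_pointwise} with a constant depending only on the doubling constant. The paper gives no proof of this lemma and simply quotes \cite[Lemma~2.3]{AKZ}, which rests on this same standard chaining argument; your write-up makes the statement self-contained and also shows that only the upper bound \(d(x,y)<2^{-k}\) is actually needed.
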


\begin{Lm}
\label{Lm.special_frac_gradient}
Let \(s\in(0,1)\), \(p\in[1,\infty]\), and let \(f\in B^s_{p,1}(X)\).
For \(k\in\mathbb Z\), define
\begin{equation}
\label{eq.special_gradient_def}
    g_k(x)
    :=
    A2^{sk}
    \sum_{j=k-2}^{\infty}\omega_j f(x),
\end{equation}
where \(A>0\) is a sufficiently large structural constant. Then there exists a set
\(N\subset X\), \(\mu(N)=0\), such that, for every \(k\in\mathbb Z\) and all
\(x,y\in X\setminus N\) satisfying $2^{-k-1}\le d(x,y)<2^{-k}$,
one has
\begin{equation}
\label{eq.special_gradient_pointwise}
    |f(x)-f(y)|
    \le
    d(x,y)^s\bigl(g_k(x)+g_k(y)\bigr).
\end{equation}
Moreover, \begin{equation} \label{eq.special_gradient_norm} \sum_{k=-\infty}^{\infty} \|g_k\|_{L_p(X)} \lesssim \|f\|_{B^s_{p,1}(X)}. \end{equation}

Finally, after possibly enlarging the exceptional set \(N\), for every \(k\in\mathbb Z\)
and all \(x,y\in X\setminus N\) satisfying \(d(x,y)<2^{-k}\), we have
\begin{equation}
\label{eq.special_gradient_small_distance}
    |f(x)-f(y)|
    \le
    2^{-sk}\bigl(g_k(x)+g_k(y)\bigr).
\end{equation}
\end{Lm}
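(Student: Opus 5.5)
Three things have to be checked: the pointwise bound \eqref{eq.special_gradient_pointwise}, which follows from Lemma~\ref{Lm.AKZ_pointwise}; the summability bound \eqref{eq.special_gradient_norm}, which comes from a Hardy-type interchange of sums; and the small-distance bound \eqref{eq.special_gradient_small_distance}, which comes from comparing $g_k$ with $g_m$ for $m\ge k$.

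\emph{Pointwise bound.} Let $N$ be the exceptional set from Lemma~\ref{Lm.AKZ_pointwise}, and let $C$ be the constant there. If $2^{-k-1}\le d(x,y)<2^{-k}$, then $d(x,y)^s\ge 2^{-s(k+1)}$. Hence
\begin{equation*}
C\sum_{j\ge k-2}\bigl(\omega_jf(x)+\omega_jf(y)\bigr)
= C2^{-sk}A^{-1}\bigl(g_k(x)+g_k(y)\bigr)
\le C2^{s}A^{-1}\,d(x,y)^s\bigl(g_k(x)+g_k(y)\bigr).
\end{equation*}
Choosing $A\ge 2C$ makes the constant at most one, which gives \eqref{eq.special_gradient_pointwise}.

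\emph{Norm bound.} Use Minkowski's inequality and then interchange the order of summation:
\begin{equation*}
\sum_{k\in\mathbb Z}\|g_k\|_{L_p}\le A\sum_{k}2^{sk}\sum_{j\ge k-2}\|\omega_jf\|_{L_p}
= A\sum_{j}\|\omega_jf\|_{L_p}\sum_{k\le j+2}2^{sk}\lesssim \sum_{j\in\mathbb Z}2^{sj}\|\omega_jf\|_{L_p}.
\end{equation*}
The geometric sum converges because $s>0$. Next use $\omega_jf\le\overline\Delta_{2^{-j}}f$, which holds by taking $c=A^\mu_{2^{-j}}f(x)$. For $j\ge0$, Remark~\ref{Rm.equiv_b_sem} controls the sum by the Besov seminorm.

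The main obstacle is the range $j<0$, i.e.\ large scales. For these I would use \eqref{eq.osc_gen} with the global doubling constant, which gives $\|\omega_jf\|_{L_p}\lesssim\|f\|_{L_p}$ uniformly in $j$ (take $c=0$ and apply the averaging bound). Then $\sum_{j<0}2^{sj}\|f\|_{L_p}\lesssim\|f\|_{L_p}$, again since $s>0$. For $p=\infty$ the same steps work with sup norms. Also note that $g_k$ is finite almost everywhere whenever the right-hand side of \eqref{eq.special_gradient_norm} is finite.

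\emph{Small distances.} Enlarge $N$ to include the diagonal-type null set on which $f$ is not defined; apart from that, the set $N$ above already works for all $k$ simultaneously. Let $x\ne y$ be outside $N$ with $d(x,y)<2^{-k}$. Pick the unique $m\ge k$ with $2^{-m-1}\le d(x,y)<2^{-m}$. By \eqref{eq.special_gradient_pointwise},
\begin{equation*}
|f(x)-f(y)|\le 2^{-sm}\bigl(g_m(x)+g_m(y)\bigr).
\end{equation*}
Since $m\ge k$, the sum defining $g_m$ runs over $j\ge m-2$, which is a subset of the range $j\ge k-2$ defining $g_k$. Therefore $2^{-sm}g_m=A\sum_{j\ge m-2}\omega_jf\le A\sum_{j\ge k-2}\omega_jf=2^{-sk}g_k$. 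This gives \eqref{eq.special_gradient_small_distance}. The case $x=y$ is trivial.
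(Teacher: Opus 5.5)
Your proof is correct and follows the paper's argument essentially step by step. It uses the pointwise bound from Lemma~\ref{Lm.AKZ_pointwise} with $A$ chosen large, and the estimate $\omega_jf\le\overline\Delta_{2^{-j}}f$ split into $j\ge0$ (dyadic Besov seminorm) and $j<0$ (uniform $L_p$ bound from global doubling), followed by interchanging the order of summation. It finishes with the monotonicity $2^{-sm}g_m\le 2^{-sk}g_k$ for $m\ge k$; the proposed enlargement of $N$ by a ``diagonal-type'' set is unnecessary, since the exceptional set from Lemma~\ref{Lm.AKZ_pointwise} already works for all $k$ simultaneously.
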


\begin{proof}
Let \(N\) be the exceptional set from Lemma~\ref{Lm.AKZ_pointwise}. Suppose first that
\(2^{-k-1}\le d(x,y)<2^{-k}\). Then \(d(x,y)^s\ge 2^{-s(k+1)}\), and therefore,
choosing \(A\) sufficiently large, \eqref{eq.AKZ_pointwise} gives
\begin{equation}
    |f(x)-f(y)|
    \le
    d(x,y)^s\bigl(g_k(x)+g_k(y)\bigr).
\end{equation}
This proves \eqref{eq.special_gradient_pointwise}.

It remains to justify \eqref{eq.special_gradient_norm}. For completeness, and because
only this case will be used below, we prove the estimate when \(q=1\). The general case
\(q\in[1,\infty]\) is contained in \cite{AKZ}. Since
\begin{equation}
    \omega_j f(x)
    \le
    \fint\limits_{B_{2^{-j}}(x)}
    |f(z)-A^\mu_{2^{-j}}f(x)|\,d\mu(z)
    =
    \overline\Delta_{2^{-j}}f(x),
\end{equation}
we have
\begin{equation}
    \|\omega_j f\|_{L_p(X)}
    \le
    \|\overline\Delta_{2^{-j}}f\|_{L_p(X)}.
\end{equation}
For \(j\ge0\), the right-hand side is controlled by the dyadic Besov seminorm.
For \(j<0\), the general oscillation estimate gives $\|\omega_j f\|_{L_p(X)}
    \lesssim
    \|f\|_{L_p(X)}$.
Hence
\begin{equation}
\label{eq.omega_sequence_est}
    \sum_{j=-\infty}^{\infty}
    2^{js}\|\omega_j f\|_{L_p(X)}
    \lesssim
    \|f\|_{B^s_{p,1}(X)}.
\end{equation}
By \eqref{eq.special_gradient_def},
\begin{equation}
    \|g_k\|_{L_p(X)}
    \le
    A2^{sk}
    \sum_{j=k-2}^{\infty}
    \|\omega_j f\|_{L_p(X)}.
\end{equation}
Consequently
\begin{equation}
\begin{aligned}
    \sum_{k=-\infty}^{\infty}
    \|g_k\|_{L_p(X)}\lesssim
    \sum_{j=-\infty}^{\infty}
    2^{js}\|\omega_j f\|_{L_p(X)} \lesssim
    \|f\|_{B^s_{p,1}(X)}.
\end{aligned}
\end{equation}
This proves \eqref{eq.special_gradient_norm}.

Finally, assume that \(d(x,y)<2^{-k}\). If \(x=y\), there is nothing to prove.
Otherwise choose \(m\ge k\) such that $2^{-m-1}\le d(x,y)<2^{-m}$.
By \eqref{eq.special_gradient_pointwise},
\begin{equation}
    |f(x)-f(y)|
    \le
    d(x,y)^s\bigl(g_m(x)+g_m(y)\bigr)
    \le
    2^{-sm}\bigl(g_m(x)+g_m(y)\bigr).
\end{equation}
On the other hand, by the definition of \(g_k\),
\begin{equation}
    2^{-sm}g_m
    =
    A\sum_{j=m-2}^{\infty}\omega_j f
    \le
    A\sum_{j=k-2}^{\infty}\omega_j f
    =
    2^{-sk}g_k.
\end{equation}
Thus
\begin{equation}
    |f(x)-f(y)|
    \le
    2^{-sk}\bigl(g_k(x)+g_k(y)\bigr),
\end{equation}
which proves \eqref{eq.special_gradient_small_distance}.
\end{proof}
\subsection{Trace theorem}

Until the end of this section, we fix \(K\ge1\) and a \(K\)-regular tree \(X\)
equipped with the measure \(\mu\) and the metric \(d_\lambda\) defined by
\eqref{eq.m_d} and \eqref{eq.d_lambda_def}. We assume that $ \mu(X)<\infty$ and $\operatorname{diam}_\lambda(X)<\infty$
and that \(\mu\) is doubling on \(( X,d_\lambda)\).

There are two natural ways to define traces of functions on \(X\) to the boundary
\(\partial X\) (see also \cite{dif_tr_op}). The first one is the metric trace:
\begin{equation}
\label{eq.tr_tr_met}
    \operatorname{Tr}_{\operatorname{met}}f(\xi)
    :=
    \lim_{r\to0}
    \fint\limits_{B_r^{\overline X}(\xi)}
    f(x)\,d\mu(x),
    \qquad \xi\in\partial X,
\end{equation}
whenever the limit exists. Since \(\mu(\partial X)=0\), the average in
\eqref{eq.tr_tr_met} may equivalently be taken over
\(B_r^{\overline X}(\xi)\cap X\).
The second one is the subtree trace:
\begin{equation}
\label{eq.tr_tr_subt}
    \operatorname{Tr}_{\Gamma}f(\xi)
    :=
    \lim_{[0,\xi)\ni x\to\xi}
    \fint\limits_{\Gamma_x}
    f(y)\,d\mu(y),
    \qquad \xi\in\partial X,
\end{equation}
whenever the limit exists.

In what follows, we study the relation between these two trace operators and describe
the trace space of the endpoint Besov space \(B^{\theta/p}_{p,1}(X)\) to the boundary
\(\partial X\).

We now introduce regularity conditions on the tree which are analogous to the upper and
lower codimension-\(\theta\) regularity conditions \eqref{eq.up_reg} and
\eqref{eq.low_reg}. For \(r\ge0\), put
\begin{equation}
\label{eq.tree_tail_functions}
    M(r)
    :=
    \mu(X^r)
    =
    \int\limits_r^\infty w(t)K^{j(t)}\,dt,
    \qquad
    \rho(r)
    :=
    \int\limits_r^\infty\lambda(t)\,dt.
\end{equation}
Here \(\rho(r)\) is the remaining \(d_\lambda\)-length from level \(r\) to the
boundary.

\begin{Def}
Let \(\theta\ge0\). We say that \((X,d_\lambda,\mu)\) is upper
\(\theta\)-regular if there exists a constant \(C>0\) such that
\begin{equation}
\label{eq.up_reg_t}
    M(r)
    \ge
    C\rho(r)^\theta,
    \qquad r\ge0.
\end{equation}
We say that \((X,d_\lambda,\mu)\) is lower \(\theta\)-regular if there exists a
constant \(C>0\) such that
\begin{equation}
\label{eq.low_reg_t}
    M(r)
    \le
    C\rho(r)^\theta,
    \qquad r\ge0.
\end{equation}
\end{Def}

\begin{Remark}
\label{Rm.reg_tr}
The preceding conditions can be reformulated in terms of subtrees. More precisely, by
the radial form of the measure \(\mu\) and by the definition of the boundary measure
\(\nu\), upper \(\theta\)-regularity is equivalent to
\begin{equation}
\label{eq.tree_upper_subtree_form}
    \nu(\partial\Gamma_x)
    \lesssim
    \frac{\mu(\Gamma_x)}
    {\operatorname{diam}_\lambda(\Gamma_x)^\theta},
    \qquad x\in X.
\end{equation}
Similarly, lower \(\theta\)-regularity is equivalent to
\begin{equation}
\label{eq.tree_lower_subtree_form}
    \nu(\partial\Gamma_x)
    \gtrsim
    \frac{\mu(\Gamma_x)}
    {\operatorname{diam}_\lambda(\Gamma_x)^\theta},
    \qquad x\in X.
\end{equation}
Indeed, for \(r=|x|\), one has $\operatorname{diam}_\lambda(\Gamma_x)\approx \rho(r)$,
and, up to constants depending only on \(K\),
\begin{equation}
    \frac{\mu(\Gamma_x)}{\nu(\partial\Gamma_x)}
    \approx
    M(r).
\end{equation}
\end{Remark}

\begin{Example}
\label{Ex.exp_tree_regular}
Let \(\lambda(t)=e^{-\varepsilon t}\) and \(w(t)=e^{-\beta t}\), where
\(\varepsilon>0\) and \(\beta>\log K\). Then
\begin{equation}
    \rho(r)\approx e^{-\varepsilon r},
    \qquad
    M(r)\approx e^{-(\beta-\log K)r}.
\end{equation}
Hence
\begin{equation}
    M(r)\approx \rho(r)^{\frac{\beta-\log K}{\varepsilon}},
\end{equation}
and therefore \((X,d_\lambda,\mu)\) is both upper and lower $\frac{\beta-\log K}{\varepsilon}$
regular.
\end{Example}
\begin{Lm}
\label{Lm.tr_equiv}
Let \(p\in[1,\infty)\), \(\theta\in(0,p)\), and put \(s=\frac{\theta}{p}\).
Assume that \((X,d_\lambda,\mu)\) is upper \(\theta\)-regular and that \(\mu\) is
doubling on \(X\). Then, for every \(f\in B^s_{p,1}(X)\), we have
\begin{equation}
\label{eq.metric_subtree_average_difference}
    \lim_{r\to0}
    \left|
    \fint\limits_{\Gamma_{x_r(\xi)}} f(y)\,d\mu(y)
    -
    \fint\limits_{B_r^{\overline X}(\xi)} f(y)\,d\mu(y)
    \right|
    =
    0
\end{equation}
for \(\nu\)-almost every \(\xi\in\partial X\). Here \(x_r(\xi)\in[0,\xi)\) is the
unique point satisfying
\begin{equation}
\label{eq.xr_def}
    \int\limits_{|x_r(\xi)|}^{\infty}\lambda(t)\,dt=r.
\end{equation}
Consequently, \(\operatorname{Tr}_{\operatorname{met}}f(\xi)\) exists if and only if
\(\operatorname{Tr}_{\Gamma}f(\xi)\) exists, for \(\nu\)-almost every
\(\xi\in\partial X\). In this case,
\begin{equation}
    \operatorname{Tr}_{\operatorname{met}}f(\xi)
    =
    \operatorname{Tr}_{\Gamma}f(\xi)
\end{equation}
for \(\nu\)-almost every \(\xi\in\partial X\).
\end{Lm}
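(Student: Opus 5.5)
The plan is to compare both averages with the average over a fixed comparable subtree, and to control the difference by a local oscillation of \(f\). By \eqref{eq.boundary_ball_subtree_inclusions},
\[
\Gamma_{x_{r/2}(\xi)}\subset B_r^{\overline X}(\xi)\cap X\subset\Gamma_{x_r(\xi)}.
\]
Moreover, \eqref{eq.subtree_doubling_completion} gives \(\mu(\Gamma_{x_r})\lesssim\mu(\Gamma_{x_{r/2}})\). So both \(B_r^{\overline X}(\xi)\cap X\) and \(\Gamma_{x_r(\xi)}\) are subsets of \(\Gamma_{x_r(\xi)}\) of comparable measure. The elementary nested-set estimate then yields
\[
\left|\fint_{\Gamma_{x_r(\xi)}}f\,d\mu-\fint_{B_r^{\overline X}(\xi)}f\,d\mu\right|
\lesssim \fint_{\Gamma_{x_r(\xi)}}\bigl|f-c\bigr|\,d\mu
\]
for every constant \(c\). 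Since \(\Gamma_{x_r(\xi)}\subset B^{\overline X}_{2r}(\xi)\), using \eqref{eq.boundary_ball_subtree_inclusions} with \(a=2r\), and doubling holds on \(\overline X\) by Lemma~\ref{Lm.completion_admissibility}, this is bounded by \(C\fint_{B_{2r}^{\overline X}(\xi)}|f-c|\,d\mu\).

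It therefore suffices to show that for \(\nu\)-a.e. \(\xi\),
\[
\inf_c\fint_{B_{\rho}^{\overline X}(\xi)}|f-c|\,d\mu\to0 \quad\text{as }\rho\to0.
\]
I will apply the general theory on the completed space \((\overline X,d_\lambda,\mu)\) with \(E=\partial X\), which is closed. By Lemma~\ref{Lm.completion_admissibility}, \(\mu\) is doubling there. The next step is to check that \(\nu\) is upper codimension-\(\theta\) regular in the sense of \eqref{eq.up_reg}. For \(\xi\in\partial X\) and small \(r\), I use \(B_r^{\overline X}(\xi)\cap\partial X\subset\partial\Gamma_{x_r(\xi)}\), together with Remark~\ref{Rm.reg_tr}, which gives \(\nu(\partial\Gamma_x)\lesssim\mu(\Gamma_x)/\operatorname{diam}_\lambda(\Gamma_x)^\theta\). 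Here \(\operatorname{diam}_\lambda(\Gamma_{x_r})\approx r\) and \(\mu(\Gamma_{x_r})\lesssim\mu(B_r^{\overline X}(\xi))\) by doubling. For large \(r\) (up to \(1\)), the estimate is trivial because \(\nu\) is a probability measure and the diameter is finite, so \(\mu(B_r)\gtrsim\) a constant by doubling.

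With upper regularity established, Theorem~\ref{Th.tr_ex} (through Lemma~\ref{Lm.tr_ex_step2}) yields the trace \(\phi=\operatorname{Tr}f\), with
\[
\fint_{B_\rho^{\overline X}(\xi)}|f-\phi(\xi)|\,d\mu\to0 \quad\text{for }\nu\text{-a.e. }\xi.
\]
Taking \(c=\phi(\xi)\) gives \eqref{eq.metric_subtree_average_difference}. One point needs care: \(f\in B^s_{p,1}(X)\) has to be regarded as an element of \(B^s_{p,1}(\overline X)\). This holds because \(\mu(\partial X)=0\) and balls centered in \(X\) are unchanged. The oscillations at centers \(\xi\in\partial X\) form a \(\mu\)-null set of centers, so the \(L_p\) norms of \(\overline\Delta_t f\) are identical.

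The ``consequently'' part follows immediately. If one of the two limits exists at such a point \(\xi\), the other exists and they agree, because their difference tends to zero. In fact both limits exist and equal \(\phi(\xi)\) for \(\nu\)-a.e. \(\xi\).

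I expect the main obstacle to be the verification of upper codimension-\(\theta\) regularity for \(\nu\) on the completion. This requires comparing \(\operatorname{diam}_\lambda(\Gamma_{x_r})\), \(\rho(|x_r|)=r\) and the ball radius carefully, and handling the case \(K=1\) and large radii. Everything else reduces to results already proved.
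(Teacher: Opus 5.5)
Your argument is correct, but it takes a different route from the paper's proof.

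\emph{The paper's route.} The paper does not invoke the general trace theorem here. It takes the fractional Haj\l{}asz gradients $g_k$ of Lemma~\ref{Lm.special_frac_gradient} and applies their pointwise estimate to pairs $y\in\Gamma_{x_r(\xi)}$, $z\in B_r^{\overline X}(\xi)\cap X$, for which $d_\lambda(y,z)\le 2r$. This bounds the difference of the two averages directly by $\bigl(r^\theta\fint_{B_{2r}^{\overline X}(\xi)}G^p\,d\mu\bigr)^{1/p}$, where $G=\sum_k g_k\in L_p$. Theorem~\ref{Th.leb_points}, together with $\nu\ll\mathcal H_\theta\lfloor_{\partial X}$, then finishes the proof.

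\emph{Your route.} You use only the elementary nested-set estimate to reduce to $\inf_c\fint_{B_{2r}^{\overline X}(\xi)}|f-c|\,d\mu$. You then quote Theorem~\ref{Th.tr_ex} on the completion $(\overline X,d_\lambda,\mu)$ with $E=\partial X$, taking $c=\operatorname{Tr}f(\xi)$.

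\emph{What each buys.} Your route avoids Lemma~\ref{Lm.special_frac_gradient}, and the pointwise inequality from \cite{AKZ} behind it, entirely. It also gives a stronger conclusion: $\operatorname{Tr}_{\operatorname{met}}f$ and $\operatorname{Tr}_\Gamma f$ both exist $\nu$-a.e. and coincide with the general trace of Section~3. The price is checking the standing hypotheses of Section~3 on $\overline X$:
\begin{itemize}
\item $\partial X$ is closed;
\item $\mu$ is doubling on $\overline X$;
\item $\nu$ is upper codimension-$\theta$ regular, including radii up to $1$;
\item $B^s_{p,1}(X)=B^s_{p,1}(\overline X)$, via $\mu(\partial X)=0$.
\end{itemize}
You verify all of these. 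The paper also checks doubling on $\overline X$ and \eqref{eq.boundary_upper_reg}, but leaves the last identification implicit.

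The paper's route, in turn, uses the hypotheses only through $\nu\ll\mathcal H_\theta\lfloor_{\partial X}$. It never needs the existence of the dyadic limits from Proposition~\ref{Prop.tr_ex_step1}. You could get the same economy by using only the oscillation bound from the proof of Lemma~\ref{Lm.tr_ex_step2}, with $c=A^\mu_t f(\xi)$ in place of $c=\operatorname{Tr}f(\xi)$.
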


\begin{proof}
By Lemma~\ref{Lm.completion_admissibility}, the extended measure \(\mu\) is doubling
on \((\overline X,d_\lambda)\). Take an arbitrary $f\in B^{\theta/p}_{p, 1}(X)$. Let
\(\{g_k\}_{k\in\mathbb Z}\) be the fractional \(s\)-Haj\l{}asz gradient of \(f\)
constructed in Lemma~\ref{Lm.special_frac_gradient}. Since \(f\in B^s_{p,1}(X)\), we
have
\begin{equation}
\label{eq.G_def}
    G:=\sum_{k=-\infty}^{\infty} g_k \in L_p(X).
\end{equation}
Fix \(r>0\), and choose \(k(r)\in\mathbb Z\) so that $2^{-k(r)-1}\le r<2^{-k(r)}$.
By \eqref{eq.boundary_ball_subtree_inclusions}, we have
\(
    B_r^{\overline X}(\xi)\cap X
    \subset
    \Gamma_{x_r(\xi)}.
\)
Hence, if \(y\in\Gamma_{x_r(\xi)}\) and
\(z\in B_r^{\overline X}(\xi)\cap X\), then
\(
    d_\lambda(y,z)\le 2r
\).
Therefore, the pointwise estimate from
Lemma~\ref{Lm.special_frac_gradient} gives
\begin{equation}
\begin{split}
    &\left|
    \fint\limits_{\Gamma_{x_r(\xi)}} f(y)\,d\mu(y)
    -
    \fint\limits_{B_r^{\overline X}(\xi)} f(y)\,d\mu(y)
    \right|                                                \\
    &\qquad\lesssim
    2^{-k(r)s}
    \left(
    \fint\limits_{\Gamma_{x_r(\xi)}} g_{k(r)-2}(y)\,d\mu(y)
    +
    \fint\limits_{B_r^{\overline X}(\xi)} g_{k(r)-2}(y)\,d\mu(y)
    \right).
\end{split}
\end{equation}
Using \eqref{eq.boundary_ball_subtree_inclusions}, applied with \(a=2r\), and
\eqref{eq.subtree_doubling_completion}, applied with \(a=2r\), we obtain
\begin{equation}
    \fint\limits_{\Gamma_{x_r(\xi)}} g_{k(r)-2}(y)\,d\mu(y)
    \lesssim
    \fint\limits_{B_{2r}^{\overline X}(\xi)}
    g_{k(r)-2}(y)\,d\mu(y).
\end{equation}
Therefore, by Jensen's inequality,
\begin{equation}
\label{eq.tr_coinc1}
\begin{split}
    &\left|
    \fint\limits_{\Gamma_{x_r(\xi)}} f(y)\,d\mu(y)
    -
    \fint\limits_{B_r^{\overline X}(\xi)} f(y)\,d\mu(y)
    \right|\lesssim
    \left(
    r^\theta
    \fint\limits_{B_{2r}^{\overline X}(\xi)}
    g_{k(r)-2}(y)^p\,d\mu(y)
    \right)^{1/p}                                          \\
    &\qquad\le
    \left(
    r^\theta
    \fint\limits_{B_{2r}^{\overline X}(\xi)}
    G(y)^p\,d\mu(y)
    \right)^{1/p}.
\end{split}
\end{equation}

It remains to show that the right-hand side tends to zero for \(\nu\)-almost every
\(\xi\in\partial X\). By Remark~\ref{Rm.reg_tr} and \eqref{eq.boundary_ball_subtree_inclusions}, the upper
\(\theta\)-regularity of the tree implies
\begin{equation}
\label{eq.boundary_upper_reg}
    \nu\bigl(B_{2r}^{\partial X}(\xi)\bigr)
    \lesssim
    \frac{\mu\bigl(B_{2r}^{\overline X}(\xi)\bigr)}{r^\theta},
    \qquad
    \xi\in\partial X,\quad r>0.
\end{equation}
Thus \(\nu\) is upper codimension-\(\theta\) regular on \(\partial X\) with respect to
the ambient space \((\overline X,d_\lambda,\mu)\). Hence, by
Remark~\ref{Rm.regularity}, \(\nu\ll \mathcal H_\theta\lfloor_{\partial X}\).
Since \(G^p\in L_1(X)\), Theorem~\ref{Th.leb_points} gives
\begin{equation}
    \lim_{r\to0}
    r^\theta
    \fint\limits_{B_{2r}^{\overline X}(\xi)}
    G(y)^p\,d\mu(y)
    =
    0
\end{equation}
for \(\mathcal H_\theta\)-almost every \(\xi\in\partial X\), and therefore for
\(\nu\)-almost every \(\xi\in\partial X\). Combining this with
\eqref{eq.tr_coinc1}, we obtain \eqref{eq.metric_subtree_average_difference}.

The final assertion follows immediately from
\eqref{eq.metric_subtree_average_difference}: if one of the two limits defining
\(\operatorname{Tr}_{\operatorname{met}}f(\xi)\) and
\(\operatorname{Tr}_{\Gamma}f(\xi)\) exists, then the other one exists and the two
limits are equal.
\end{proof}
\begin{Remark}
Assume that \(p\in[1,\infty)\), \(s\in(\frac{\theta}{p},1)\), and
\(q\in[1,\infty]\). Then Lemma~\ref{Lm.tr_equiv} also applies to functions in
\(B^s_{p,q}(X)\). Indeed, by the standard Besov embedding theorem, $B^s_{p,q}(X)\hookrightarrow B^{\theta/p}_{p,1}(X)$
(see \cite[Section~3]{mal_e}).
\end{Remark}

Until the end of the section, whenever both traces exist, we write
\(
    \operatorname{Tr}f
    :=
    \operatorname{Tr}_{\operatorname{met}}f
    =
    \operatorname{Tr}_{\Gamma}f\).
The trace \(\operatorname{Tr}_{\operatorname{met}}\) coincides with the trace operator
defined in the general metric-space setting, applied to the ambient space
\((\overline X,d_\lambda,\mu)\) and to the closed set \(E=\partial X\). Moreover, by Remark~\ref{Rm.reg_tr} and \eqref{eq.boundary_ball_subtree_inclusions}, upper, respectively lower,
\(\theta\)-regularity of \((X,d_\lambda,\mu)\) implies upper, respectively lower,
codimension-\(\theta\) regularity of the boundary measure \(\nu\) in the sense of
Section~2. Therefore, the boundedness and extension parts of
Theorem~\ref{Th.trees_main} follow from the general trace theorems. The remaining
tree-specific point is the following criterion for the existence of traces.

\begin{Prop}
\label{Prop.tree_upper_necessity}
Let \(p\in[1,\infty)\) and \(s\in(0,1)\). Assume that
\((X,d_\lambda,\mu)\) supports a weak \((1,p)\)-Poincar\'e inequality. Then the
following conditions are equivalent:
\begin{enumerate}
    \item for every \(f\in B^s_{p,1}(X)\), the trace \(\operatorname{Tr}f(\xi)\) is
    finite for \(\nu\)-almost every \(\xi\in\partial X\);
    \item \((X,d_\lambda,\mu)\) is upper \(ps\)-regular.
\end{enumerate}
\end{Prop}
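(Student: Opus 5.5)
The plan is to prove the two implications separately, with $\theta:=ps$. Note that $\theta\in(0,p)$ because $s\in(0,1)$.

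\emph{(2)$\Rightarrow$(1).} Assume $(X,d_\lambda,\mu)$ is upper $\theta$-regular. By Remark~\ref{Rm.reg_tr} and \eqref{eq.boundary_ball_subtree_inclusions}, the boundary measure $\nu$ is upper codimension-$\theta$ regular on $\partial X$ in the ambient space $(\overline X,d_\lambda,\mu)$. By Lemma~\ref{Lm.completion_admissibility}, $\mu$ is doubling on $\overline X$. Then Theorem~\ref{Th.tr_ex} and Theorem~\ref{Th.dir} give a finite metric trace $\operatorname{Tr}_{\operatorname{met}}f\in L_p(\partial X,\nu)$ for every $f\in B^s_{p,1}(X)$. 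Lemma~\ref{Lm.tr_equiv} identifies this with $\operatorname{Tr}_\Gamma f$ $\nu$-a.e., so $\operatorname{Tr}f$ is finite $\nu$-a.e.

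\emph{(1)$\Rightarrow$(2).} I would argue by contradiction. Suppose upper $\theta$-regularity fails. Then there are levels $r_n$ with $M(r_n)\le 4^{-n}\rho(r_n)^\theta$, and we may take $r_n\to\infty$ (bounded $r$ are harmless, since $M,\rho$ are positive and continuous on compacts). The test function will be radial, built from cut-offs adapted to tails. Put
\[
\psi_n(x):=\min\{1,\max\{0,\,2-\rho(|x|)/\rho(r_n')\}\},
\]
where $r_n'$ is chosen so that $\rho(r_n')=\rho(r_n)/2$ (or similar). This $\psi_n$ equals $1$ on $X^{r_n''}$, vanishes off $X^{r_n}$, and is Lipschitz with respect to $d_\lambda$ with constant $\lesssim\rho(r_n)^{-1}$, because $\rho(|x|)$ is $1$-Lipschitz for $d_\lambda$. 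Since $(\overline X,d_\lambda,\mu)\in\mathfrak U_p^{\operatorname{gl}}$, Remark~\ref{Rm.lip_est} with $\delta=\rho(r_n)$ gives
\[
\|\psi_n\|_{B^s_{p,1}}\lesssim \rho(r_n)^{-s}M(r_n)^{1/p}\le 2^{-2n/p}\cdot(\text{const}).
\]
Each $\psi_n$ is constant $1$ near every boundary point, so $\operatorname{Tr}\psi_n\equiv1$ on all of $\partial X$.

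Now set $f:=\sum_n 2^{n/p}\psi_n$ (coefficients may be tuned). The series converges in $B^s_{p,1}(X)$ because $\sum 2^{n/p}2^{-2n/p}<\infty$. However, on $X^{r_N''}$ we have $f\ge\sum_{n\le N}2^{n/p}\to\infty$ (the tails are nested once $r_n$ is increasing), so subtree averages $\fint_{\Gamma_x}f\,d\mu\to\infty$ as $x\to\xi$ along every ray. Hence $\operatorname{Tr}_\Gamma f(\xi)=+\infty$ for every $\xi\in\partial X$, contradicting (1). To make this rigorous I would verify that $\fint_{\Gamma_x}f\ge\sum_{n:\,r_n''\le|x|}2^{n/p}$, using $\psi_n\ge0$ and monotone convergence in the series.

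The main obstacle I expect is the bookkeeping for the cut-off. First, $r_n'$ must exist with $\rho(r_n')$ comparable to $\rho(r_n)$. Second, the $L_p$ norm must be controlled by $M(r_n)$ while the Lipschitz constant stays at $\rho(r_n)^{-1}$. Third, the Besov estimate of Remark~\ref{Rm.lip_est} needs $\delta=\rho(r_n)\le1$, which holds for large $n$ after rescaling the metric if necessary. A further subtlety is that the definition of ``trace finite'' must treat divergence to $+\infty$ as failure; since $f\ge0$ the averages increase without bound, so neither limit exists as a finite number.
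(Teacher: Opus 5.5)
Your proof is correct. The sufficiency direction is exactly the paper's argument, and for necessity you take a genuinely simpler route.

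\textbf{The paper's route.} The paper builds bumps $h_k=\psi_k(|x|)$ supported in the disjoint strips $X^{r_k,r_{k+1}}$. To keep their Lipschitz constants of order $\delta_k^{-1}=\rho(r_k)^{-1}$, it must pass to a subsequence with $\rho(r_{k+1})\le\frac12\rho(r_k)$, so that the strip width satisfies $l_k\approx\delta_k$. Moreover, $f=\sum_k k\,h_k$ vanishes at every level $r_k$ and so is not monotone along rays. The paper therefore needs the strip-doubling Lemma~\ref{Lm.doubl_strip} to show that $\fint_{\Gamma_{x_k}}f\,d\mu\gtrsim k$.

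\textbf{Your route.} Your tail cut-offs satisfy the same norm bound $\rho(r_n)^{-s}M(r_n)^{1/p}$. The transition width is automatically comparable to $\rho(r_n)$ because $\rho(|\cdot|)$ is $1$-Lipschitz for $d_\lambda$. Since the cut-offs are nonnegative, nested, and equal to $1$ on whole tails, you get the pointwise bound $f\ge\sum_{n\le N}c_n$ on $X^{r_N'}$. This makes divergence immediate, both for subtree averages and for ball averages, since $B_r^{\overline X}(\xi)\cap X\subset\Gamma_{x_r(\xi)}\subset X^{|x_r(\xi)|}$. The ball-average case is relevant because Lemma~\ref{Lm.tr_equiv} is unavailable once upper regularity fails. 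You need neither Lemma~\ref{Lm.doubl_strip} nor the geometric-decay subsequence. Doubling enters only through Lemma~\ref{Lm.completion_admissibility} and Remark~\ref{Rm.lip_est}. All the paper's strips buy is disjoint supports, which are not needed, since the Besov norm of $f$ is bounded by the triangle inequality in either approach.

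\textbf{Minor points.}
\begin{enumerate}
  \item Your $r_n''$ should read $r_n'$.
  \item No rescaling of the metric is needed: $\rho(r_n)\to0$, so $\delta=\rho(r_n)<1$ after discarding finitely many terms.
  \item Constant coefficients $c_n=1$ would already suffice.
\end{enumerate}
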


\begin{proof}
The sufficiency follows from the direct trace theorem,
Theorem~\ref{Th.main_direct}, applied to the space
\((\overline X,d_\lambda,\mu)\), the trace set \(\partial X\), and the parameter
\(\theta=ps\). Indeed, upper \(ps\)-regularity of the tree implies upper
codimension-\(ps\) regularity of the boundary measure \(\nu\), and
Lemma~\ref{Lm.tr_equiv} identifies the metric trace with the subtree trace.

We prove the necessity. Assume that \((X,d_\lambda,\mu)\) is not upper
\(ps\)-regular. Then there exists an increasing sequence \(\{r_k\}_{k=1}^{\infty}\),
\(r_k\to\infty\), such that
\begin{equation}
\label{eq.tr_d_trees1}
    \frac{
    \int\limits_{r_k}^{\infty}w(t)K^{j(t)}\,dt
    }{
    \left(
    \int\limits_{r_k}^{\infty}\lambda(t)\,dt
    \right)^{ps}
    }
    <
    2^{-k}.
\end{equation}
Passing to a subsequence, we may also assume that
\begin{equation}
\label{eq.tr_d_trees2}
    \int\limits_{r_{k+1}}^{\infty}\lambda(t)\,dt
    \le
    \frac12
    \int_{r_k}^{\infty}\lambda(t)\,dt, \qquad \delta_k:=\int\limits_{r_k}^{\infty}\lambda(t)\,dt<1, \qquad k\in \mathbb{N}.
\end{equation}

Put
\begin{equation}
    l_k
    :=
    \int\limits_{r_k}^{r_{k+1}}\lambda(t)\,dt.
\end{equation}
Let \(r_k'<r_k''\) be the unique numbers in \((r_k,r_{k+1})\) satisfying
\begin{equation}
    \int\limits_{r_k}^{r_k'}\lambda(t)\,dt
    =
    \int\limits_{r_k''}^{r_{k+1}}\lambda(t)\,dt
    =
    \frac{l_k}{4}.
\end{equation}
Define \(\psi_k:[0,\infty)\to[0,1]\) by
\begin{equation}
\label{eq.psi_k_def}
    \psi_k(t)
    =
    \begin{cases}
    \displaystyle
    \frac{4}{l_k}\int\limits_{r_k}^{t}\lambda(\tau)\,d\tau,
    & t\in[r_k,r_k'],\\[2ex]
    1,
    & t\in[r_k',r_k''],\\[1ex]
    \displaystyle
    \frac{4}{l_k}\int\limits_{t}^{r_{k+1}}\lambda(\tau)\,d\tau,
    & t\in[r_k'',r_{k+1}],\\[2ex]
    0,
    & \text{otherwise}.
    \end{cases}
\end{equation}
Then the radial function $h_k(x):=\psi_k(|x|)$
is Lipschitz on \(X\), supported in \(X^{r_k,r_{k+1}}\), and satisfies
\begin{equation}
\label{eq.tr_d_trees4}
    \operatorname{lip} h_k(x)
    \lesssim
    \frac{1}{\delta_k}
    \chi_{X^{r_k,r_{k+1}}}(x).
\end{equation}
Here we used \eqref{eq.tr_d_trees2}, which implies $l_k
    \approx \delta_k$.

By Remark~\ref{Rm.lip_est}, applied with $\delta_k$,
and by \eqref{eq.tr_d_trees1}, we obtain
\begin{equation}
\label{eq.tr_d_trees5}
\begin{split}
    \|h_k\|_{B^s_{p,1}(X)}
    &\lesssim
    \delta_k^{1-s}\|\operatorname{lip} h_k\|_{L_p(X)}
    +
    \delta_k^{-s}\|h_k\|_{L_p(X)}
    +
    \|h_k\|_{L_p(X)}                                      \\
    &\lesssim
    \delta_k^{-s}\left(\mu(X^{r_k})\right)^{1/p}                  =
    \left(
    \frac{
    \int\limits_{r_k}^{\infty}w(t)K^{j(t)}\,dt
    }{
    \left(
    \int\limits_{r_k}^{\infty}\lambda(t)\,dt
    \right)^{ps}
    }
    \right)^{1/p}
    <
    2^{-\frac{k}{p}}.
\end{split}
\end{equation}

Let \(c_k=k\), and define
\begin{equation}
\label{eq.bad_tree_function_def}
    f(x)
    :=
    \sum_{k=1}^{\infty}
    c_k h_k(x)
    =
    \sum_{k=1}^{\infty}
    c_k\psi_k(|x|).
\end{equation}
Since
\begin{equation}
    \sum_{k=1}^{\infty} k\,2^{-\frac{k}{p}}<\infty,
\end{equation}
estimate \eqref{eq.tr_d_trees5} gives $f\in B^s_{p,1}(X)$.

It remains to show that the trace of \(f\) fails to be finite at every boundary point.
Fix \(\xi\in\partial X\), and let \(x_k\in[0,\xi)\) be such that \(|x_k|=r_k\).
By radiality of the measure \(\mu\), for every \(j\ge k\) we have
\begin{equation}
    \frac{
    \mu\bigl(\Gamma_{x_k}\cap \frac12 X^{r_j,r_{j+1}}\bigr)
    }{
    \mu(\Gamma_{x_k})
    }
    =
    \frac{
    \mu\bigl(\frac12 X^{r_j,r_{j+1}}\bigr)
    }{
    \mu(X^{r_k})
    }.
\end{equation}
Hence, using Lemma~\ref{Lm.doubl_strip}, we obtain
\begin{equation}
\label{eq.bad_trace_divergence}
\begin{split}
    \fint\limits_{\Gamma_{x_k}}f(y)\,d\mu(y)
    &\ge
    \frac{1}{\mu(X^{r_k})}
    \sum_{j=k}^{\infty}
    c_j\mu\left(\frac12 X^{r_j,r_{j+1}}\right)       \\
    &\gtrsim
    \frac{1}{\mu(X^{r_k})}
    \sum_{j=k}^{\infty}
    c_j\mu\left(X^{r_j,r_{j+1}}\right)               \\
    &\ge
    c_k
    \frac{
    \sum_{j=k}^{\infty}\mu\left(X^{r_j,r_{j+1}}\right)
    }{
    \mu(X^{r_k})
    }
    =
    c_k
    \to\infty
\end{split}
\end{equation}
as \(k\to\infty\). Therefore, \(\operatorname{Tr}f(\xi)\) is not finite. Since
\(\xi\in\partial X\) was arbitrary, the trace fails to be finite at every boundary point.
This proves the necessity.
\end{proof}
\begin{proof}[Proof of Theorem~\ref{Th.trees_main}]
The boundedness of the trace operator under upper \(\theta\)-regularity follows from
Theorem~\ref{Th.main_direct}, applied to the completed ambient space
\((\overline X,d_\lambda,\mu)\), the trace set \(E=\partial X\), and the boundary
measure \(\nu\). Indeed, by Lemma~\ref{Lm.completion_admissibility}, the completed
space has the required doubling property, and upper \(\theta\)-regularity of the tree
implies upper codimension-\(\theta\) regularity of \(\nu\). Lemma~\ref{Lm.tr_equiv}
identifies the metric trace with the subtree trace.

The equivalence between the existence of finite traces for all functions in
\(B^s_{p,1}(X)\) and upper \(ps\)-regularity is exactly
Proposition~\ref{Prop.tree_upper_necessity}.

Finally, if the tree is both upper and lower \(\theta\)-regular and supports a weak
\((1,p)\)-Poincar\'e inequality, then Lemma~\ref{Lm.completion_admissibility} implies
that \((\overline X,d_\lambda,\mu)\) is \(p\)-admissible. Moreover, the lower
\(\theta\)-regularity of the tree implies lower codimension-\(\theta\) regularity of
\(\nu\). Hence the extension operator follows from Theorem~\ref{Th.main_inverse}.
\end{proof}

\begin{Remark}
For Newton--Sobolev spaces on \(K\)-regular trees, one often uses the pointwise trace
operator
\begin{equation}
    \operatorname{Tr}f(\xi)
    =
    \lim_{[0,\xi)\ni x\to\xi} f(x),
\end{equation}
that is, the limit of \(f\) along the geodesic ray \([0,\xi)\).

For Besov spaces the situation is more delicate. In general, Besov functions need not
have locally continuous representatives, and therefore there is no canonical pointwise
representative to evaluate along a ray. For instance, if
\(w(t)=e^{-\beta t}\) and \(\lambda(t)=e^{-\varepsilon t}\), then away from the vertices the tree is locally one-dimensional, and local continuity follows only under the usual one-dimensional
Besov embedding condition (see, for example, \cite[Section~2.8.3]{trieb})
\begin{equation}
    s>\frac1p
    \qquad\text{or}\qquad
    s=\frac1p,\ q=1.
\end{equation}
This condition is unrelated to the
codimension of the boundary, which is the relevant parameter in the trace problem
considered here. For this reason, we use averaged trace operators rather than
pointwise traces in the Besov setting.
\end{Remark}

\subsection*{Acknowledgments}
\addtocontents{toc}{\protect\setcounter{tocdepth}{0}}
The author is deeply grateful to his Scientific Advisor Alexander I. Tyulenev for proposing the problem, and for his guidance and attention throughout this work.
\par
\renewcommand{\bibname}{References}
\bibliographystyle{plain}
\bibliography{refs.bib}

@article{AKZ, author = {Gogatishvili, Amiran and Koskela, Pekka and Zhou, Yuan}, title = {Characterizations of {B}esov and {T}riebel--{L}izorkin spaces on metric measure spaces}, journal = {Forum Mathematicum}, volume = {25}, number = {4}, pages = {787--819}, year = {2013}, doi = {10.1515/form.2011.135}, eprint = {1106.2561}, archivePrefix = {arXiv}, primaryClass = {math.CA}, url = {https://arxiv.org/abs/1106.2561} }

@article{Gul,
  author  = {Gulisashvili, A. B.},
  title   = {Traces of differential functions on subsets of the {Euclidean} space},
  journal = {J. Sov. Math.},
  fjournal = {Journal of Soviet Mathematics},
  issn    = {0090-4104},
  volume  = {42},
  number  = {2},
  pages   = {1573--1583},
  year    = {1988},
  doi     = {10.1007/BF01665043},
  language= {English}
}

@misc{mal_e,
  author       = {Mal{\'y}, Luk{\'a}{\v{s}}},
  title        = {Trace and extension theorems for {Sobolev}-type functions in metric spaces},
  year         = {2017},
  eprint       = {1704.06344},
  archivePrefix = {arXiv},
  note   = {\href{https://arxiv.org/abs/1704.06344}{arXiv:1704.06344}},
  primaryClass = {math.MG}
}

@article{bur_gold,
  author   = {Burenkov, V. I. and Gol'dman, M. L.},
  title    = {On the extension of functions from {$L_p$}},
  journal  = {Tr. Mat. Inst. Steklova},
  fjournal = {Trudy Matematicheskogo Instituta Imeni V. A. Steklova},
  issn     = {0371-9685},
  volume   = {150},
  pages    = {31--51},
  year     = {1979},
  language = {Russian}
}

@article{gagl,
  author    = {Gagliardo, Emilio},
  title     = {Caratterizzazioni delle tracce sulla frontiera relative ad alcune classi di funzioni in {$n$} variabili},
  journal   = {Rend. Semin. Mat. Univ. Padova},
  fjournal  = {Rendiconti del Seminario Matematico della Universit\`a di Padova},
  year      = {1957},
  volume    = {27},
  pages     = {284--305},
  url       = {https://eudml.org/doc/106977},
  language  = {Italian}
}

@article{mig,
  author    = {Marcos, Miguel Andr\'es},
  title     = {A trace theorem for {B}esov functions in spaces of homogeneous type},
  journal   = {Publicacions Matem\`atiques},
  volume    = {62},
  number    = {1},
  pages     = {185--211},
  year      = {2018},
  publisher = {Universitat Aut\`onoma de Barcelona, Departament de Matem\`atiques},
  doi       = {10.5565/PUBLMAT6211810},
  url       = {https://doi.org/10.5565/PUBLMAT6211810}
}

@article{saks,
  author  = {Saksman, Eero and Soto, Tom\'as},
  title   = {Traces of {B}esov, {T}riebel--{L}izorkin and {S}obolev {S}paces on {M}etric {S}paces},
  journal = {Analysis and Geometry in Metric Spaces},
  volume  = {5},
  number  = {1},
  pages   = {98--115},
  year    = {2017},
  doi     = {10.1515/agms-2017-0006},
  url     = {https://doi.org/10.1515/agms-2017-0006}
}

@article{Ihnat,
  author   = {Ihnatsyeva, Lizaveta and V{\"a}h{\"a}kangas, Antti V.},
  title    = {Characterization of traces of smooth functions on {A}hlfors regular sets},
  journal  = {Journal of Functional Analysis},
  volume   = {265},
  number   = {9},
  pages    = {1870--1915},
  year     = {2013},
  issn     = {0022-1236},
  doi      = {10.1016/j.jfa.2013.07.006},
  url      = {https://www.sciencedirect.com/science/article/pii/S0022123613002656}
}

@misc{Jon,
  author       = {Jonsson, Alf and Wallin, Hans},
  title        = {Function spaces on subsets of $\mathbb{R}^n$},
  year         = {1984},
  howpublished = {Math. Rep., Chur 2, No. 1, xiv + 221 p.}
}

@book{sob_mms,
  author    = {Heinonen, Juha and Koskela, Pekka and Shanmugalingam, Nageswari and Tyson, Jeremy T.},
  title     = {Sobolev Spaces on Metric Measure Spaces: An Approach Based on Upper Gradients},
  series    = {New Mathematical Monographs},
  publisher = {Cambridge University Press},
  place     = {Cambridge},
  year      = {2015},
  collection= {New Mathematical Monographs}
}

@article{tyul,
  author  = {Tyulenev, Alexander I.},
  title   = {Traces of {Sobolev} spaces to irregular subsets of metric measure spaces},
  journal = {Sbornik: Mathematics},
  volume  = {214},
  number  = {9},
  pages   = {1241--1320},
  year    = {2023},
  doi     = {10.4213/sm9893e},
  eprint  = {2212.11271},
  archivePrefix = {arXiv},
  primaryClass  = {math.FA}
}

@article{shanm_tr_s,
  author    = {Bj\"orn, Anders and Bj\"orn, Jana and Gill, James T. and Shanmugalingam, Nageswari},
  title     = {Geometric analysis on {C}antor sets and trees},
  journal   = {Journal f\"ur die reine und angewandte Mathematik (Crelles Journal)},
  number    = {725},
  pages     = {63--114},
  year      = {2014},
  doi       = {10.1515/crelle-2014-0099},
  url       = {http://dx.doi.org/10.1515/crelle-2014-0099},
  ISSN      = {0075-4102},
  publisher = {Walter de Gruyter GmbH}
}

@article{tr_den_tr,
  author   = {Koskela, Pekka and Nguyen, Khanh Ngoc and Wang, Zhuang},
  title    = {Trace and density results on regular trees},
  journal  = {Potential Anal.},
  fjournal = {Potential Analysis},
  issn     = {0926-2601},
  volume   = {57},
  number   = {1},
  pages    = {101--128},
  year     = {2022},
  doi      = {10.1007/s11118-021-09907-2}
}

@article{dyad_norm,
  author   = {Koskela, Pekka and Wang, Zhuang},
  title    = {Dyadic norm {Besov}-type spaces as trace spaces on regular trees},
  journal  = {Potential Anal.},
  fjournal = {Potential Analysis},
  issn     = {0926-2601},
  volume   = {53},
  number   = {4},
  pages    = {1317--1346},
  year     = {2020},
  doi      = {10.1007/s11118-019-09808-5}
}

@article{Ext_tr_h_f,
  title     = {Extension and trace results for doubling metric measure spaces and their hyperbolic fillings},
  volume    = {159},
  ISSN      = {0021-7824},
  url       = {http://dx.doi.org/10.1016/j.matpur.2021.12.003},
  DOI       = {10.1016/j.matpur.2021.12.003},
  journal   = {Journal de Math\'ematiques Pures et Appliqu\'ees},
  publisher = {Elsevier BV},
  author    = {Bj\"orn, Anders and Bj\"orn, Jana and Shanmugalingam, Nageswari},
  year      = {2022},
  pages     = {196--249}
}

@article{dif_tr_op,
  author  = {Koskela, Pekka and Nguyen, Khanh Ngoc and Wang, Zhuang},
  title   = {Trace {O}perators on {R}egular {T}rees},
  journal = {Analysis and Geometry in Metric Spaces},
  volume  = {8},
  number  = {1},
  pages   = {396--409},
  year    = {2020},
  doi     = {10.1515/agms-2020-0117},
  url     = {https://doi.org/10.1515/agms-2020-0117}
}

@article{koch_sn,
  author    = {Kazaniecki, Krystian and Wojciechowski, Micha{\l}},
  title     = {Trace Operator on von {K}och's {S}nowflake},
  journal   = {Potential Analysis},
  year      = {2024},
  volume    = {61},
  pages     = {659--684},
  doi       = {10.1007/s11118-024-10124-w},
  url       = {https://doi.org/10.1007/s11118-024-10124-w},
  publisher = {Springer},
  issn      = {0926-2601}
}

@article{tyul2,
  author  = {Tyulenev, Alexander I.},
  title   = {Traces of {Sobolev} spaces on piecewise {A}hlfors--{D}avid regular sets},
  journal = {Mathematical Notes},
  volume  = {114},
  number  = {3},
  pages   = {351--376},
  year    = {2023},
  doi     = {10.1134/S0001434623090079},
  eprint  = {2303.08913},
  archivePrefix = {arXiv},
  primaryClass  = {math.FA}
}

@book{trieb,
 author = {Triebel, Hans},
 title = {Theory of function spaces},
 fseries = {Mathematik und ihre Anwendungen in Physik und Technik},
 series = {Math. Anwend. Phys. Tech.},
 volume = {38},
 year = {1983},
 publisher = {Akademische Verlagsgesellschaft Geest \& Portig K.-G., Leipzig},
 language = {English},
 zbMATH = {3870002},
 Zbl = {0546.46028}
}

@article{Peetre,
 author = {Peetre, Jaak},
 title = {A counter-example connected with {Gagliardo}'s trace theorem},
 fjournal = {Annales Societatis Mathematicae Polonae},
 journal = {Commentationes Mathematicae. Special Issue},
 issn = {0373-8299},
 pages = {277--282},
 year = {1979},
 language = {English},
 zbMATH = {3690060},
 Zbl = {0442.46026}
}

@article{Bes_or,
 author = {Besov, O. V.},
 title = {Investigation of a family of function spaces in connection with theorems of imbedding and extension},
 fjournal = {Translations. Series 2. American Mathematical Society},
 journal = {Transl., Ser. 2, Am. Math. Soc.},
 issn = {0065-9290},
 volume = {40},
 pages = {85--126},
 year = {1964},
 language = {English},
 doi = {10.1090/trans2/040/03},
 zbMATH = {3254379},
 Zbl = {0158.13901}
}

@article{Iwon,
 author = {Piotrowska, Iwona},
 title = {Traces on fractals of function spaces with {Muckenhoupt} weights},
 fjournal = {Functiones et Approximatio. Commentarii Mathematici},
 journal = {Funct. Approximatio, Comment. Math.},
 issn = {0208-6573},
 volume = {36},
 pages = {95--117},
 year = {2006},
 language = {English},
 doi = {10.7169/facm/1229616444},
 zbMATH = {5225674},
 Zbl = {1140.46015}
}

@article{Har_Sch,
author = {Dorothee D. Haroske and Hans-J{\"u}rgen Schmeisser},
title = {On trace spaces of function spaces with a radial weight: the atomic approach},
journal = {Complex Variables and Elliptic Equations},
volume = {55},
number = {8-10},
pages = {875--896},
year = {2010},
publisher = {Taylor \& Francis},
doi = {10.1080/17476930903276050},
URL = { https://doi.org/10.1080/17476930903276050},
eprint = {https://doi.org/10.1080/17476930903276050}
}

@article{gibara,
      title={Solving a Dirichlet problem for unbounded domains via a conformal transformation}, 
      author={Ryan Gibara and Riikka Korte and Nageswari Shanmugalingam},
      journal = {Mathematische Annalen},
      volume  = {389},
      year    = {2024},
      pages   = {2857--2901},
      eprint={2209.09773},
      archivePrefix={arXiv},
      primaryClass={math.AP},
      url={https://arxiv.org/abs/2209.09773}, 
}

@article{Gol,
  author  = {Gol'dman, M. L.},
  title   = {On the extension of functions from {$L_p(\mathbb R^m)$} to a space of higher dimension},
  journal = {Mathematical Notes},
  year    = {1979},
  volume  = {25},
  number  = {4},
  pages   = {266--270},
  doi     = {10.1007/BF01688477},
  note    = {Russian original: Mat. Zametki 25 (1979), no. 4, 513--520}
}

@article{Netrusov,
  author   = {Netrusov, Yu. V.},
  title    = {Sets of singularities of functions in spaces of {Besov} and {Lizorkin--Triebel} type},
  journal  = {Trudy Mat. Inst. Steklov.},
  volume   = {187},
  year     = {1989},
  pages    = {162--177},
  note     = {English translation: Proc. Steklov Inst. Math. 187 (1990), 185--203},
  mrnumber = {1006450},
  zbl      = {0719.46018}
}

@article{Nuutinen,
  author  = {Nuutinen, Juho},
  title   = {The {Besov} capacity in metric spaces},
  journal = {Annales Polonici Mathematici},
  volume  = {117},
  number  = {1},
  pages   = {59--78},
  year    = {2016},
  issn    = {1730-6272},
  doi     = {10.4064/ap3843-4-2016},
  url     = {https://doi.org/10.4064/ap3843-4-2016}
}

\end{document}